\let\reftagform@=\tagform@
\def\tagform@#1{\maketag@@@{(\ignorespaces\textcolor{blue}{#1}\unskip\@@italiccorr)}}
\renewcommand{\eqref}[1]{\textup{\reftagform@{\ref{#1}}}}
\newtheorem{theorem}{Theorem}
\theoremstyle{plain}
\newtheorem{corollary}{Corollary}
\newtheorem{definition}{Definition}
\newtheorem{example}{Example}
\newtheorem{proposition}{Proposition}
\newtheorem{remark}{Remark}
\numberwithin{equation}{section}
  \def\etal{et al.\,}
\begin{document}
\title[Some properties of $h$-${\rm{MN}}$-convexity]{Some properties of $h$-${\rm{MN}}$-convexity and Jensen's type inequalities}
\author[M.W. Alomari]{Mohammad W. Alomari}

\address {Department of Mathematics, Faculty of Science and Information	Technology, Irbid National University,  P.O. Box 2600, Irbid, P.C. 21110, Jordan.}
\email{mwomath@gmail.com}
\date{\today}
\subjclass[2000]{26A51, 26A15, 26E60}

\keywords{$h$-convex function, Means, Jensen inequality}

\begin{abstract}
 In this work, we introduce the class of
$h$-${\rm{MN}}$-convex functions by generalizing the concept of
${\rm{MN}}$-convexity and combining it with $h$-convexity. Namely,
Let $I,J$ be two intervals subset of $\left(0,\infty\right)$ such
that $\left(0,1\right)\subseteq J$ and $\left[a,b\right]\subseteq
I$. Consider  a non-negative function $h: (0,\infty)\to
\left(0,\infty\right)$ and let ${\rm{M}}:\left[0,1\right]\to
\left[a,b\right] $ $(0<a<b)$ be a Mean function given by
${\rm{{\rm{M}}}}\left(t\right)={\rm{{\rm{M}}}}\left( {h(t);a,b}
\right)$; where by ${\rm{{\rm{M}}}}\left( {h(t);a,b} \right)$ we
mean one of the following  functions: $A_h\left( {a,b}
\right):=h\left( {1 - t} \right)a + h(t) b$, $G_h\left( {a,b}
\right)=a^{h(1-t)} b^{h(t)}$ and $H_h\left( {a,b}
\right):=\frac{ab}{h(t) a + h\left( {1 - t} \right)b} =
\frac{1}{A_h\left( {\frac{1}{a},\frac{1}{b}} \right)}$; with the
property that ${\rm{{\rm{M}}}}\left( {h(0);a,b} \right)=a$ and
${\rm{M}}\left( {h(1);a,b} \right)=b$.

A function $f : I \to
\left(0,\infty\right)$ is said to be $h$-${\rm{{\rm{MN}}}}$-convex
(concave) if the inequality
\begin{align*}
f \left({\rm{M}}\left(t;x, y\right)\right) \le  (\ge) \,
{\rm{N}}\left(h(t);f (x), f (y)\right),
\end{align*}
holds  for  all $x,y \in I$ and $t\in [0,1]$, where M and N are two mean functions. In this way,  nine
classes of $h$-${\rm{MN}}$-convex functions are established  and
some of their analytic properties are
explored and investigated. Characterizations of each type are
given. Various Jensen's type inequalities and their converses are
proved.
\end{abstract}

\maketitle
\section{Introduction}
 Throughout this work, $I$ and $J$ are two
intervals subset of $\left(0,\infty\right)$ such that
$\left(0,1\right)\subseteq J$ and $\left[a,b\right]\subseteq I$
with $0<a<b$.     A function $f:I\to \mathbb{R}$ is called convex
iff
\begin{align}
f\left( {t\alpha +\left(1-t\right)\beta} \right)\le tf\left(
{\alpha} \right)+ \left(1-t\right) f\left( {\beta}
\right),\label{eq1.1}
\end{align}
for all points $\alpha,\beta \in I$ and all $t\in [0,1]$. If $-f$
is convex then we say that $f$ is concave. Moreover, if $f$ is
both convex and concave, then $f$ is said to be affine.

In 1978, Breckner \cite{B1} introduced the class of $s$-convex
functions (in the second sense), as follows:
\begin{definition}
\label{def1}  Let $I \subseteq \left[0,\infty\right)$ and $s\in
(0,1]$, a function $f : I\to \left[0,\infty\right)$ is
 $s$-convex function or that $f$ belongs to the
class $K^2_s\left(I\right)$ if for all $x,y \in I$ and $t \in
[0,1]$ we have
\begin{align*}
f \left(tx+\left(1-t\right)y\right) \le
t^sf\left(x\right)+\left(1-t\right)^sf\left(y\right).
\end{align*}
\end{definition}
In \cite{B2}, Breckner proved that every $s$-convex function
satisfies the H\"{o}lder condition of order $s$. Another proof of
this fact was given in \cite{P}. For more properties  regarding
$s$-convexity see \cite{B3} and \cite{HL}.

In 1985, E. K. Godnova and V. I. Levin (see \cite{GL} or
\cite{MPF}, pp. 410-433) introduced the following class of
functions:
\begin{definition}
\label{def2}   We say that $f : I \to \mathbb{R}$ is a
Godunova-Levin function or that $f$ belongs to the class $Q\left(I
\right)$ if for all $x,y \in I$ and $t \in (0,1)$ we have
\begin{align*}
f \left(tx+\left(1-t\right)y\right) \le
\frac{f\left(x\right)}{t}+\frac{f\left(y\right)}{1-t}.
\end{align*}
\end{definition}
In the same work, the authors proved that  all nonnegative
monotonic and nonnegative convex functions belong to this class.
For related works see \cite{DPP} and \cite{MP}.

In 1999, Pearce and Rubinov \cite{PR}, established a new type of
convex functions which is called $P$-functions.
\begin{definition}
\label{def3}  We say that  $f : I\to \mathbb{R}$ is $
 P $-function or that $f$ belongs to the class
$P\left(I \right)$ if for all $x,y \in I$ and $t \in [0,1]$ we
have
\begin{align*}
f \left(tx+\left(1-t\right)y\right) \le
 f\left(x\right)+ f\left(y\right).
\end{align*}
\end{definition}
Indeed, $Q(I) \supseteq P (I)$ and for applications it is
important to note that $P (I)$ also consists only of nonnegative
monotonic, convex and quasi-convex functions.  A related work was
considered in \cite{DPP} and \cite{TYD}.

In 2007, Varo\v{s}anec \cite{V} introduced the class of $h$-convex
functions which generalize convex, $s$-convex, Godunova-Levin
functions and $P$-functions. Namely, the $h$-convex function is
defined as a non-negative function $f : I \to \mathbb{R}$ which
satisfies
\begin{align*}
f\left( {t\alpha +\left(1-t\right)\beta} \right)\le
h\left(t\right) f\left( {\alpha} \right)+ h\left(1-t\right)
f\left( {\beta} \right),
\end{align*}
where $h$ is a non-negative function, $t\in \in (0, 1)\subseteq J$
and $x,y \in I $, where $I$ and $J$ are real intervals such that
$(0,1) \subseteq J $. Accordingly, some properties of $h$-convex
functions were discussed in the same work of Varo\v{s}anec. For
more results; generalization, counterparts and inequalities
regarding $h$-convexity see \cite{AOV},\cite{AAS},\cite{BV},
\cite{C}--\cite{D2},\cite{H},\cite{LW},\cite{M}, \cite{O},  \cite{SSY} and \cite{YBQ}

While he studying  $h$-convex functions, Alomari \cite{mwo} proposed a rational geometric and analytic meaning of $h$-convexity by introducing the concept of  $h$-cord as follows:
\begin{definition}\emph{(\cite{mwo})}
    The $h$-cord joining any two points
    $\left(x,f\left(x\right)\right)$ and
    $\left(y,f\left(y\right)\right)$ on the graph of $f$ is defined to
    be
    \begin{align}
    L\left(t;h\right):= \left[ {f\left( y \right) - f\left( x \right)}
    \right]h\left(\frac{t-x}{y-x}\right) +  f\left( x
    \right),\label{eq1.2}
    \end{align}
    for all $t\in [x,y]\subseteq I$ $(x<y)$.  In particular, if
    $h(t)=t$ then we obtain the well known form of chord, which is
    \begin{align*}
    L\left(t;t\right):=\frac{f\left( y \right) - f\left( x
        \right)}{y-x} \left( {t-x} \right)+ f\left( x \right).
    \end{align*}
\end{definition}
It's worth to mention that, if $h\left(0\right)=0$ and
$h\left(1\right)=1$, then $L\left(x;h\right)=  f\left( x \right)$
and $L\left(y;h\right)= f\left( y \right)$, so that the $h$-cord
$L$ agrees with $f$ at endpoints $x,y$, and this true for all such
$x,y \in
I$.

The $h$-convexity of a function $f : I \to \mathbb{R}$ means
geometrically that the points of the graph of $f$ are on or below
the $h$-chord joining the endpoints
$\left(x,f\left(x\right)\right)$ and
$\left(y,f\left(y\right)\right)$ for all $x,y \in I$, $x < y$. In
symbols, we write
\begin{align}
f\left(t\right)\le  \left[ {f\left( y \right) - f\left( x \right)}
\right]h\left(\frac{t-x}{y-x}\right) + f\left( x
\right)=L\left(t;h\right),\label{eq1.3}
\end{align}
for any $x \le t \le y$ and $x,y\in I$.
\begin{figure}[!h]
    \begin{center}
        \includegraphics[angle=0,width=2.6in]{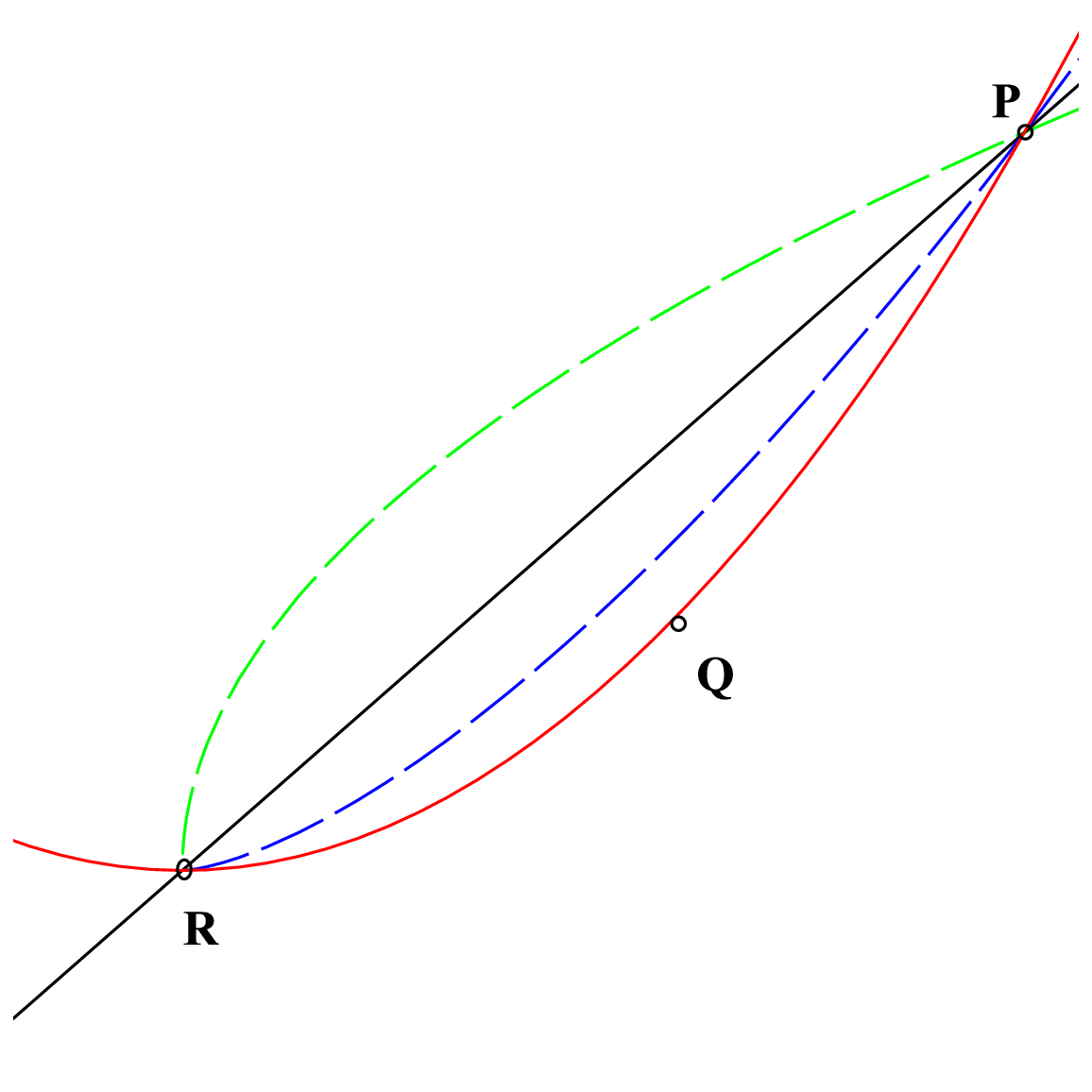}
    \end{center}
    \caption{The graph of $h_k(t)=t^k$, $k=\frac{1}{2}, 1 ,
        \frac{3}{2}$ (green, black, blue), respectively, and $f(t)=t^2$
        (red), $t\in [0,1]$.} \label{fig1}
\end{figure}
    Given any three non-collinear points $P,Q$ and $R$ on the graph of
$f$ with $Q$ between $P$ and $R$ (say $P<Q<R$). Let $h$ is
super(sub)multiplicative and $h\left(\alpha\right)\ge
(\le)\,\alpha$, for $\alpha \in \left(0,1\right)\subset J$. A
function $f$ is $h$--convex (concave) if $Q$ is on or below
(above) the $h$-chord $\widehat{PR}$ (see Figure \ref{fig1}).\\

\noindent \textbf{Caution:} In special case, for
$h_k\left(t\right) = t^k$, $t\in \left(0,1\right)$ the proposed geometric
interpretation is valid for $k \in (-1,0)\cup (0,\infty)$. In the
case that $k \le -1$ or $k=0$ the geometric meaning is
inconclusive so we exclude this case (and (and similar cases) from our proposal above.

\begin{definition}
	Let $h: J\to \left(0,\infty\right)$ be a non-negative
	function. Let $f : I \to \mathbb{R}$ be any function. We say $f$
	is $h$-midconvex ($h$-midconcave) if
	\begin{align*}
	f\left( {\frac{{x + y}}{2}} \right) \le (\ge)\,h\left(
	{\frac{1}{2}} \right)\left[ {f\left( x \right) + f\left( y
		\right)} \right]
	\end{align*}
	for  all $x,y \in I$.
\end{definition}
In particular, $f$ is locally $h$-midocnvex if and only if
\begin{align*}
h\left( {\frac{1}{2}} \right)\left[f\left( {x + p} \right) +
f\left( {x - p} \right)\right] - f\left( x \right) \ge 0,
\end{align*}
for  all $x\in \left(x-p,x+p\right)$, $p>0$.

Generalization of the well known Jensen convexity, could be stated as
follows:
\begin{theorem}
	Let $h: J\to \left(0,\infty\right)$ be a non-negative function
	such that $h\left(\alpha\right) \ge \alpha$, for all $\alpha \in
	(0,1)$. Let $f : I \to \mathbb{R}_+$ be a   nonnegative
	continuous function. $f$ is $h$-convex if and only if it is
	$h$-midconvex; i.e., the inequality
	\begin{align*}
	f\left( {\frac{{x + y}}{2}} \right) \le  h\left( {\frac{1}{2}}
	\right)\left[ {f\left( x \right) + f\left( y \right)} \right],
	\end{align*}
	holds for  all $x,y \in I$.
\end{theorem}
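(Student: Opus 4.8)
The plan is to prove the two implications separately, the forward one being essentially trivial and all the work sitting in the converse. If $f$ is $h$-convex, then specializing the defining inequality $f(tx+(1-t)y)\le h(t)f(x)+h(1-t)f(y)$ to $t=\tfrac12$ gives $f\big(\tfrac{x+y}{2}\big)\le h(\tfrac12)[f(x)+f(y)]$, i.e.\ $h$-midconvexity; this uses neither the hypothesis $h(\alpha)\ge\alpha$ nor the continuity of $f$.

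For the converse, fix $x,y\in I$; the goal is $f(tx+(1-t)y)\le h(t)f(x)+h(1-t)f(y)$ for every $t\in[0,1]$, and I would obtain it first on the dyadic rationals $t=k/2^{m}$ by induction on $m$, then extend by continuity. The base case $m=1$, i.e.\ $t=\tfrac12$, is precisely the assumed $h$-midconvexity (the cases $t\in\{0,1\}$ being immediate). In the inductive step, a dyadic $t$ of level $m$ with $0<k<2^{m}$ is the midpoint of two dyadics $p,q\in[0,1]$ of level $\le m-1$ — take $p=(k-1)/2^{m}$, $q=(k+1)/2^{m}$ when $k$ is odd, while for $k$ even $t$ itself already has level $\le m-1$. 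Setting $u=px+(1-p)y$, $v=qx+(1-q)y$, so that $tx+(1-t)y=\tfrac{u+v}{2}$, midconvexity together with the induction hypothesis applied to $u$ and $v$ yields
\begin{align*}
f\big(tx+(1-t)y\big)\le h(\tfrac12)\big(h(p)+h(q)\big)\,f(x)+h(\tfrac12)\big(h(1-p)+h(1-q)\big)\,f(y).
\end{align*}
Since $f\ge0$ and $t=\tfrac{p+q}{2}$, $1-t=\tfrac{(1-p)+(1-q)}{2}$, the inductive step closes as soon as one has
\begin{align*}
h\big(\tfrac12\big)\big[h(a)+h(b)\big]\le h\big(\tfrac{a+b}{2}\big)\qquad\text{for all }a,b\in[0,1].
\end{align*}

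I expect this last inequality — a kind of ``$h$-midconcavity of $h$ itself'' — to be the main obstacle, and it is exactly where one must exploit the structural properties of $h$ beyond non-negativity: the hypothesis $h(\alpha)\ge\alpha$ together with the super/submultiplicativity conditions highlighted in the Introduction (with supermultiplicativity one gets $h(\tfrac{a+b}{2})\ge h(\tfrac12)\,h(a+b)$ and is reduced to comparing $h(a+b)$ with $h(a)+h(b)$). Granting the dyadic case, the proof finishes with a standard continuity argument: for arbitrary $t\in[0,1]$ choose dyadic rationals $t_{j}\to t$; then $t_{j}x+(1-t_{j})y\to tx+(1-t)y$ inside the compact segment between $x$ and $y$, so continuity of $f$ gives $f(t_{j}x+(1-t_{j})y)\to f(tx+(1-t)y)$, and letting $j\to\infty$ in $f(t_{j}x+(1-t_{j})y)\le h(t_{j})f(x)+h(1-t_{j})f(y)$ — invoking also the continuity of $h$ — delivers the inequality at $t$. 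Hence $f$ is $h$-convex.
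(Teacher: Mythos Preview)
The paper states this theorem in the Introduction without supplying a proof (it is presented as a known generalization, implicitly deferred to the author's preprint cited as [mwo]), so there is no argument in the paper to compare against. Evaluating your attempt on its own merits:

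The forward implication and the overall dyadic-plus-continuity scaffolding are fine, but the converse has a genuine gap at exactly the point you yourself flag as the ``main obstacle''. The inequality
\[
h\Bigl(\tfrac12\Bigr)\bigl[h(a)+h(b)\bigr]\le h\Bigl(\tfrac{a+b}{2}\Bigr)
\]
does not follow from the sole hypothesis $h(\alpha)\ge\alpha$. For the prototype $h(t)=\sqrt{t}$ (which satisfies $h(\alpha)\ge\alpha$ on $(0,1)$) with $a=b=\tfrac14$, the left side equals $\tfrac{1}{\sqrt2}\cdot 2\cdot\tfrac12=\tfrac{1}{\sqrt2}$ while the right side is $\tfrac12$, so the inequality fails. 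Your suggested repair via supermultiplicativity only reduces the problem to $h(a)+h(b)\le h(a+b)$, i.e.\ superadditivity of $h$; but $\sqrt{\,\cdot\,}$ is multiplicative and \emph{sub}additive, so this route is blocked for precisely the $h_s(t)=t^{s}$, $s<1$, that the paper treats as the basic examples. Hence the inductive step, as written, cannot be closed under the stated assumptions on $h$.

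A second, smaller issue: your passage from dyadic $t$ to arbitrary $t$ invokes continuity of $h$, which the theorem does not assume.

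In short, the dyadic induction that works verbatim for classical convexity (the case $h(t)=t$, where your key inequality is an identity) does not transfer to the $h$-setting under the hypothesis $h(\alpha)\ge\alpha$ alone; a genuinely different mechanism is needed, and the present proposal does not supply one.
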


It's well knwon that every convex function is Lipschitz continuous. Moreover,   Breckner \cite{B2} (see also \cite{mwo1}, \cite{B3} and \cite{P}) proved that every $s$-convex functions is H\"{o}lder continuous of order $s\in (0,1]$. Recently, Alomari \cite{mwo} used the concept of control functions in numberical analysis to extend these facts in terms of  $h$-convex functions. Recall that a  function $h: J \subseteq [0,\infty) \to
[0,\infty]$ is called a control function if
\begin{enumerate}
    \item $h$ is nondecreasing,

    \item $\inf_{\delta > 0} h\left(\delta\right) = 0$.
\end{enumerate}

A function $f:I\to \mathbb{R}$ is $h$-continuous at $x_0$ if
$\left| f\left(x\right) - f\left(x_0\right)\right| \le h\left(|x-
x_0|\right)$, for all $x \in I$. Furthermore, a function is
continuous in   $x_0$ if it is $h$-continuous for some control
function $h$.

This approach leads us to refining the notion of continuity by
restricting the set of admissible control functions.

For a given set of control functions $\mathcal{C}$ a function is
$\mathcal{C}$-continuous if it is $h$-continuous for all $h \in
\mathcal{C}$. For example the  H\"{o}lder continuous functions of
order $\alpha \in (0,1]$  are defined by the set of control
functions
\begin{align*}
\mathcal{C}_{\alpha {\text{-H\"{o}lder}}}  =\left\{h |
h\left(\delta\right) = H \left|\delta\right|^{\alpha},  H >
0\right\}
\end{align*}
In case $\alpha=1$, the set $\mathcal{C}_{1- {\rm{Holder}}}$
contains all  functions satisfying the Lipschitz condition.

In \cite{mwo}, Alomari proved the following theorem.
\begin{theorem} \label{thm1}
    Let $(0,1)\subseteq J$, $h: J\to \left(0,\infty\right)$ be a
    control function which is supermultiplicative such that $h(\alpha)\ge \alpha$ for each
    $\alpha\in (0,1)$. Let $I$ be a real interval, $a,b\in \mathbb{R}$
    $(a<b)$ with $a,b$ in $ I^{\circ}$ (the interior of $I$). If $f :
    I \to \mathbb{R}$ is non-negative $h$-convex function on $[a,b]$,
    then $f$ is $h$-continuous on $[a,b]$.
\end{theorem}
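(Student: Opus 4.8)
The plan is to imitate the classical proof that a convex function is locally Lipschitz (and Breckner's refinement that an $s$-convex function is locally $s$-H\"older), replacing the linear weights by $h$ and supplying two extra ingredients that come out of the hypotheses on $h$. The first, and the one the whole argument hinges on, is the observation that under the standing assumptions one automatically has $h(\alpha)\le 1$ for every $\alpha\in(0,1)$. Indeed, fix $\alpha\in(0,1)$: supermultiplicativity gives $h(\alpha^{n})\ge h(\alpha)^{n}$ for every $n\ge 1$, while $\alpha^{n}\downarrow 0$ together with the monotonicity of $h$ and $\inf_{\delta>0}h(\delta)=0$ forces $h(\alpha^{n})\to 0$; if $h(\alpha)>1$ the left side would tend to $+\infty$, a contradiction. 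Hence $\alpha\le h(\alpha)\le 1$ on $(0,1)$. I would isolate this first.

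Next I would get an a priori bound on $f$. Since $a,b\in I^{\circ}$, pick $\varepsilon\in(0,1)$ with $[a-\varepsilon,b+\varepsilon]\subseteq I$, on which $f$ is nonnegative and $h$-convex. Writing any $u\in[a-\varepsilon,b+\varepsilon]$ as $u=\mu(a-\varepsilon)+(1-\mu)(b+\varepsilon)$ with $\mu\in[0,1]$ and applying $h$-convexity together with $h\le 1$ on $(0,1)$,
\[
0\le f(u)\le h(\mu)f(a-\varepsilon)+h(1-\mu)f(b+\varepsilon)\le f(a-\varepsilon)+f(b+\varepsilon)=:M<\infty ,
\]
the cases $\mu\in\{0,1\}$ being trivial; thus $0\le f\le M$ on $[a-\varepsilon,b+\varepsilon]$.

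For the local modulus of continuity, take $x_{0},x\in[a,b]$ with $x_{0}<x$, set $d:=x-x_{0}$, and assume first $d<\varepsilon$ (the case $x=x_{0}$ is trivial and $x_{0}>x$ is symmetric). The points $x+\varepsilon$ and $x_{0}-\varepsilon$ lie in $[a-\varepsilon,b+\varepsilon]$, and one has the convex representations $x=\tfrac{\varepsilon}{d+\varepsilon}x_{0}+\tfrac{d}{d+\varepsilon}(x+\varepsilon)$ and $x_{0}=\tfrac{d}{d+\varepsilon}(x_{0}-\varepsilon)+\tfrac{\varepsilon}{d+\varepsilon}x$. Applying $h$-convexity to each, and then discarding the terms $\bigl(h(\tfrac{\varepsilon}{d+\varepsilon})-1\bigr)f(x_{0})$ and $\bigl(h(\tfrac{\varepsilon}{d+\varepsilon})-1\bigr)f(x)$, which are $\le 0$ because $\tfrac{\varepsilon}{d+\varepsilon}\in(0,1)$ makes $h(\tfrac{\varepsilon}{d+\varepsilon})\le 1$ while $f\ge 0$, gives $f(x)-f(x_{0})\le h(\tfrac{d}{d+\varepsilon})f(x+\varepsilon)$ and $f(x_{0})-f(x)\le h(\tfrac{d}{d+\varepsilon})f(x_{0}-\varepsilon)$. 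Since $\tfrac{d}{d+\varepsilon}\le\tfrac{d}{\varepsilon}$ and $h$ is nondecreasing, $h(\tfrac{d}{d+\varepsilon})\le h(\tfrac{d}{\varepsilon})$; and writing $d=\tfrac{d}{\varepsilon}\cdot\varepsilon$ with all three numbers in $(0,1)$, supermultiplicativity yields $h(d)\ge h(\tfrac{d}{\varepsilon})h(\varepsilon)$, i.e. $h(\tfrac{d}{\varepsilon})\le h(d)/h(\varepsilon)$ with $h(\varepsilon)>0$. Combining with the bound $0\le f\le M$,
\[
|f(x)-f(x_{0})|\le \frac{M}{h(\varepsilon)}\,h\bigl(|x-x_{0}|\bigr)\qquad\text{whenever }|x-x_{0}|<\varepsilon .
\]

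Finally, for $|x-x_{0}|\ge\varepsilon$ the same inequality is immediate from $0\le f\le M$ and monotonicity of $h$, since $|f(x)-f(x_{0})|\le M=\tfrac{M}{h(\varepsilon)}h(\varepsilon)\le\tfrac{M}{h(\varepsilon)}h(|x-x_{0}|)$. Hence $f$ is $h$-continuous on $[a,b]$, with control function $\tfrac{M}{h(\varepsilon)}h$, which is of the same type as $h$. The only genuine obstacle is the very first step: for an arbitrary $h$ the coefficient $h(\tfrac{\varepsilon}{d+\varepsilon})-1$ need not be nonpositive and the a priori bound on $f$ need not be free, so it is exactly the normalization $h\le 1$ on $(0,1)$ — extracted from supermultiplicativity together with the control-function axioms — that drives the argument; after that, the one thing to watch is to invoke supermultiplicativity in the correct direction when passing from $h(\tfrac{d}{d+\varepsilon})$ to $h(|x-x_{0}|)$.
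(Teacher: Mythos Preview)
The paper does not actually supply a proof of this theorem: it is quoted from the author's earlier preprint \cite{mwo} and stated here without argument, so there is no in-paper proof to compare against. That said, your proposal is correct and follows exactly the route one would expect from the classical Breckner argument for $s$-convex functions. The decisive step is your opening observation that supermultiplicativity together with the control-function axioms (monotonicity and $\inf_{\delta>0}h(\delta)=0$) forces $h(\alpha)\le 1$ on $(0,1)$; this is precisely what lets you discard the term $\bigl(h(\tfrac{\varepsilon}{d+\varepsilon})-1\bigr)f(\cdot)$ and obtain the uniform bound $0\le f\le M$, both of which would otherwise fail for a general $h$. After that, your use of monotonicity to pass from $h\bigl(\tfrac{d}{d+\varepsilon}\bigr)$ to $h\bigl(\tfrac{d}{\varepsilon}\bigr)$ and of supermultiplicativity (in the correct direction) to reach $h(d)/h(\varepsilon)$ is clean.

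Two minor remarks. First, you tacitly use $h$-convexity of $f$ on the enlarged interval $[a-\varepsilon,b+\varepsilon]$, whereas the statement literally says ``$h$-convex on $[a,b]$''; since $f$ is defined on $I$ with $a,b\in I^{\circ}$, the intended reading is evidently $h$-convexity on $I$ (otherwise the hypothesis $a,b\in I^{\circ}$ would be idle), so this is harmless. Second, the hypothesis $h(\alpha)\ge\alpha$ is recorded in your summary but never actually invoked anywhere in the argument; it appears to be redundant for the $h$-continuity conclusion itself.
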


We recall that, a function ${\rm{M}}:(0,\infty) \to (0,\infty)$ is
called a Mean function if
\begin{enumerate}
    \item  {\rm{Symmetry:}} ${\rm{M}}\left(x,y\right)={\rm{M}}\left(y,x\right)$.

    \item {\rm{Reflexivity:}} ${\rm{M}}\left(x,x\right)=x$.

    \item {\rm{Monotonicity:}} $\min\{x,y\} \le {\rm{M}}\left(x,y\right) \le \max\{x,y\}$.

    \item {\rm{Homogeneity:}} $ {\rm{M}}\left(\lambda x,\lambda y\right)=\lambda
    {\rm{M}}\left(x,y\right)$, for any positive scalar $\lambda$.
\end{enumerate}

The most famous and old known mathematical means are listed as
follows:
\begin{enumerate}
\item The arithmetic mean :
$$A := A\left( {\alpha ,\beta } \right) = \frac{{\alpha  + \beta
}}{2},\,\,\,\,\,\alpha ,\beta \in  \mathbb{R}_+.$$

\item The geometric mean :
$$G: = G\left( {\alpha ,\beta } \right) = \sqrt {\alpha \beta },\,\,\,\,\,\alpha ,\beta \in \mathbb{R}_+$$

\item The harmonic mean :
$$H: = H\left(
{\alpha ,\beta } \right) = \frac{2}{{\frac{1}{\alpha } +
\frac{1}{\beta }}},\,\,\,\,\,\alpha ,\beta \in \mathbb{R}_+
 - \left\{ 0 \right\}.$$
\end{enumerate}
In particular, we have the famous inequality $ H \le G \le A$.

In 2007, Anderson \etal in \cite{AVV} developed a systematic study
to the classical theory of continuous and midconvex functions, by
replacing a given mean instead of the arithmetic mean.
\begin{definition}
\label{def4}Let $f : I \to \left(0, \infty\right)$ be a
continuous function where $I \subseteq (0,\infty)$. Let ${\rm{M}}$
and ${\rm{N}}$ be any two Mean functions. We say $f$ is
${\rm{{\rm{MN}}}}$-convex (concave) if
\begin{align}
f \left({\rm{M}}\left( x, y\right)\right) \le  (\ge) \,
{\rm{N}}\left(f (x), f (y)\right), \label{eq1.5}
\end{align}
 for  all $x,y \in I$ and $t\in [0,1]$.
\end{definition}
In fact, the authors in \cite{AVV} discussed the midconvexity of
positive continuous real functions according to some Means. Hence,
the usual midconvexity is a special case when both mean values are
arithmetic means. Also, they studied the dependence of
${\rm{MN}}$-convexity on ${\rm{M}}$ and ${\rm{N}}$ and give
sufficient conditions for ${\rm{MN}}$-convexity of functions
defined by Maclaurin series.   For other works regarding
${\rm{MN}}$-convexity see \cite{N} and \cite{NP}.

The aim of this work, is to study the main properties of
$h$-${\rm{MN}}$-convex functions, such as; addition, product,
compositions and some functional type inequalities for some
classes. Jensen inequality and its consequences with their
converses play  significant roles in (almost) all areas of
Mathematics and Physics. For example, Jensen inequality used to
prove some important inequalities such as AM, GM, HM inequalities
and their consequences, moreover it can be used to generate some
more ramified inequalities. All this happens using the classical
concept of convex set and convex functions, but what happen when
we replace these terms by another convexity terms such as
$h$-${\rm{MN}}$-convexity?. In fact, discovering new Jensen type
inequalities will help us to  find, refine, and generate new
important inequalities e.g., AM-GM-HM type inequalities which have
a wide range of applications.

In this work, the class of $h$-${\rm{MN}}$-convex functions is
introduced. Generalizing and extending some classes of convex
functions are given. Some analytic properties for each class of
functions are explored and investigated. Characterizations of each
type of convexity are established. Some related Jensen's type
inequalities and their converses are proved.

\section{The $h$-${\rm{{\rm{MN}}}}$-convexity}

Let $h: J\to \left(0,\infty\right)$ be a non-negative
function. Define the
function ${\rm{M}}:\left[0,1\right]\to \left[a,b\right]$ given by
${\rm{{\rm{M}}}}\left(t\right)={\rm{{\rm{M}}}}\left( {h\left( {t} \right);a,b}
\right)$; where by ${\rm{{\rm{M}}}}\left( {t;a,b} \right)$ we mean
one of the following functions:
\begin{enumerate}
\item $A_h\left( {a,b} \right):=h\left( {t} \right)a +  h\left( {1-t} \right)b
 ;\qquad \text{The generalized Arithmetic
Mean}$.\\

\item $G_h\left( {a,b}
\right)=a^{h\left( {t} \right)} b^{h\left( {1-t} \right)};\qquad\qquad\,\,\,\,\,\,\, \text{The generalized Geometric Mean}$.\\

\item $H_h\left( {a,b} \right):=\frac{ab}{h\left( {1-t} \right) a + h\left( {  t}
\right)b} = \frac{1}{A_h\left( {\frac{1}{a},\frac{1}{b}} \right)};
\qquad\,\,\,\,\, \text{The generalized
Harmonic Mean}$.\\
\end{enumerate}
Note that ${\rm{{\rm{M}}}}\left( {h\left( {0} \right);a,b} \right)=a$ and
${\rm{M}}\left( {h\left( {1} \right);a,b} \right)=b$. Clearly, for $h(t)=t$ with $t=\frac{1}{2}$,
the means $A_{\frac{1}{2}}$, $G_{\frac{1}{2}}$ and
$H_{\frac{1}{2}}$, respectively; represents the midpoint of the
$A_{t}$, $G_{t}$ and $H_{t}$, respectively; which was discussed in
\cite{AVV} in viewing of Definition \ref{def4}.

For $h(t)=t$, we note that the above means are related with celebrated
AM-GM-HM inequality
\begin{align*}
H_t\left( {a,b} \right)\le G_t\left( {a,b} \right) \le  A_t\left(
{a,b} \right),\qquad \forall \,\, t\in [0,1].
\end{align*}
Indeed, one can easily prove  more general form of the above inequality; that is if $h$ is positive increasing on $[0,1]$ then the generalized AM-GM-HM inequality
is given by
\begin{align}
\label{AM-GM-HM}H_h\left( {a,b} \right)\le G_h\left( {a,b} \right) \le  A_h\left(
{a,b} \right),\qquad \forall \,\, t\in [0,1]\,\,\,\,\text{and}\,\,\,\,  a,b>0.
\end{align}

\subsection{Basic properties of $h$-${\rm{{\rm{MN}}}}$-convex  functions}

The Definition \ref{def4} can be extended according to the defined
mean ${\rm{{\rm{M}}}}\left( {t;a,b} \right)$, as follows:
 Let $f : I \to \left(0, \infty\right)$
be any function. Let ${\rm{M}}$ and ${\rm{N}}$ be any two Mean
functions. We say $f$ is ${\rm{{\rm{MN}}}}$-convex (concave) if
\begin{align*}
f \left({\rm{M}}\left(t;x, y\right)\right) \le  (\ge) \,
{\rm{N}}\left(t;f (x), f (y)\right),
\end{align*}
 for  all $x,y \in I$ and $t\in [0,1]$.

Next, we introduce the class of ${\rm{M_tN_h}}$-convex
functions by generalizing the concept of ${\rm{M_tN_t}}$-convexity
and combining it with $h$-convexity.
\begin{definition}
\label{def5} Let $h: J\to \left(0,\infty\right)$ be a non-negative
function. Let $f : I \to \left(0, \infty\right)$ be any function.
Let ${\rm{M}}:\left[0,1\right]\to \left[a,b\right]$ and
${\rm{N}}:\left(0,\infty\right)\to \left(0,\infty\right)$ be any
two Mean functions. We say $f$ is $h$-${\rm{{\rm{MN}}}}$-convex
(-concave) or that $f$ belongs to the class
$\overline{\mathcal{MN}}\left(h,I\right)$
($\underline{\mathcal{MN}}\left(h,I\right)$) if
\begin{align}
f \left({\rm{M}}\left(t;x, y\right)\right) \le  (\ge) \,
{\rm{N}}\left(h(t);f (x), f (y)\right),\label{eq2.4}
\end{align}
 for  all $x,y \in I$ and $t\in [0,1]$.
\end{definition}
Clearly, if ${\rm{M}}\left(t;x, y\right)=A_t\left( {x,y}
\right)={\rm{N}}\left(t;x, y\right)$, then Definition \ref{def5}
reduces to the original concept of $h$-convexity. Also, if we
assume $f$ is continuous,  $h(t)=t$ and $t=\frac{1}{2}$ in
\eqref{eq2.4}, then the Definition \ref{def5} reduces to the
Definition \ref{def4}.

The cases of $h$-${\rm{{\rm{MN}}}}$-convexity are given with
respect to a certain mean, as follow:
\begin{enumerate}
\item $f$ is  ${\rm{A_tG_h}}$-convex iff
\begin{align}
 f\left( {t\alpha  + \left( {1 -
t} \right)\beta } \right) \le \left[ {f\left( \alpha  \right)}
\right]^{h\left( t \right)} \left[ {f\left( \beta \right)}
\right]^{h\left(1- t \right)}, \qquad  0\le t\le 1,\label{eqhAG}
\end{align}

\item $f$ is  ${\rm{A_tH_h}}$-convex iff
\begin{align}
 f\left( {t\alpha  + \left( {1 -
t} \right)\beta } \right) \le\frac{{f\left( \alpha \right)f\left(
\beta  \right)}}{{h\left( 1-t \right)f\left( \alpha  \right) +
h\left( { t} \right)f\left( \beta  \right)}}, \qquad  0\le t\le
1.\label{eqhAH}
\end{align}

\item $f$ is  ${\rm{G_tA_h}}$-convex iff
\begin{align}
f\left( {\alpha ^t \beta ^{1 - t} } \right) \le h\left( {t}
\right)f\left( \alpha \right) + h\left( {1 - t} \right)f\left(
\beta  \right), \qquad 0\le t\le 1.\label{eqhGA}
\end{align}

\item $f$ is ${\rm{G_tG_h}}$-convex iff
\begin{align}
f\left( {\alpha ^t \beta ^{1 - t} } \right) \le \left[ {f\left(
\alpha  \right)} \right]^{h\left( {t} \right)} \left[ {f\left(
\beta  \right)} \right]^{h\left( {1 - t} \right)}, \qquad  0\le
t\le 1.\label{eqhGG}
\end{align}

\item $f$ is ${\rm{G_tH_h}}$-convex iff
\begin{align}
f\left( {\alpha ^t \beta ^{1 - t} } \right) \le \frac{{f\left(
\alpha  \right)f\left( \beta  \right)}}{{h\left( {1- t}
\right)f\left( \alpha  \right) + h\left( { t} \right)f\left( \beta
\right)}}, \qquad  0\le t\le 1.\label{eqhGH}
 \end{align}

\item $f$ is  ${\rm{H_tA_h}}$-convex iff
\begin{align}
f\left( {\frac{{\alpha \beta }}{{t\alpha  + \left( {1 - t}
\right)\beta }}} \right) \le h\left( {1-t} \right)f\left( \alpha
\right) + h\left( { t} \right)f\left( \beta  \right), \qquad 0\le
t\le 1.\label{eqhHA}
 \end{align}
 \item $f$ is  ${\rm{H_tG_h}}$-convex iff
\begin{align}
f\left( {\frac{{\alpha \beta }}{{t\alpha  + \left( {1 - t}
\right)\beta }}} \right) \le \left[ {f\left( \alpha  \right)}
\right]^{h\left( { 1-t} \right)} \left[ {f\left( \beta  \right)}
\right]^{h\left( {  t} \right)}, \qquad 0\le t\le 1.\label{eqhHG}
 \end{align}

\item $f$ is ${\rm{H_tH_h}}$-convex iff
\begin{align}
f\left( {\frac{{\alpha \beta }}{{t\alpha  + \left( {1 - t}
\right)\beta }}} \right) \le \frac{{f\left( \alpha  \right)f\left(
\beta  \right)}}{{h\left( {t} \right)f\left( \alpha  \right) +
h\left( {1 - t} \right)f\left( \beta  \right)}}, \qquad  0\le t\le
1.\label{eqhHH}
\end{align}
\end{enumerate}

\begin{remark}
In all previous cases,  $h(t)$ and $h(1-t)$ are not equal to zero
at the same time. Therefore, if
 $h(0)=0$ and $h(1)=1$, then the Mean function ${\rm{N}}$ satisfying
 the conditions ${\rm{N}}\left( {h\left( 0 \right),f\left( x \right),f\left( y \right)}
\right) = f\left( x \right)$ and $ {\rm{N}}\left( {h\left( 1
\right),f\left( x \right),f\left( y \right)} \right) = f\left(y
\right) $.
\end{remark}
\begin{remark}
According to the Definition \ref{def5},  we may extend the classes
$Q(I), P(I)$
 and $K_s^2$ by replacing the  arithmetic mean by another given
 one.
 Let ${\rm{M}}:\left[0,1\right]\to \left[a,b\right]$
and ${\rm{N}}:\left(0,\infty\right)\to \left(0,\infty\right)$ be
any two Mean functions.
\begin{enumerate}
\item Let $s\in (0,1]$, a function $f : I\to
\left(0,\infty\right)$ is ${\rm{M_tN_{t^s}}}$-convex function or
that $f$ belongs to the class
$K^2_s\left(I;{\rm{M_t}},{\rm{N_{t^s}}}\right)$ if for all $x,y \in I$
and $t \in [0,1]$ we have
\begin{align}
f \left({\rm{M}}\left(t;x, y\right)\right) \le {\rm{N}}\left(t^s;f
(x), f (y)\right).\label{eq2.10}
\end{align}

\item  We say that $f : I \to \left(0, \infty\right)$ is an
extended Godunova-Levin function or that $f$ belongs to the class
$Q\left(I;{\rm{M_t}},{\rm{N_{1/t}}}\right)$ if for all $x,y \in I$ and
$t \in (0,1)$ we have
\begin{align}
f \left({\rm{M}}\left(t;x, y\right)\right) \le
{\rm{N}}\left(\frac{1}{t};f (x), f (y)\right).\label{eq2.11}
\end{align}

\item We say that  $f : I\to \left(0,\infty\right)$ is
$P$-${\rm{M_tN_{t=1}}}$-function or that $f$ belongs to the class
$P\left(I;{\rm{M_t}},{\rm{N_1}}\right)$ if for all $x,y \in I$ and
$t \in [0,1]$ we have
\begin{align}
f \left({\rm{M}}\left(t;x, y\right)\right) \le {\rm{N}}\left(1;f
(x), f (y)\right).\label{eq2.12}
\end{align}
 In \eqref{eq2.10}--\eqref{eq2.12}, setting ${\rm{M}}\left(t;x,
y\right)= {\rm{A_t}}\left(x, y\right)={\rm{N}}\left(t;x,
y\right)$, we then refer to the original definitions of these
class of convexities (see Definitions \ref{def1}--\ref{def3}).
\end{enumerate}
\end{remark}

\begin{remark}\cite{V}
\label{remark2}Let $h$ be a non-negative function such that
$h\left(t\right) \ge t$ for $t\in \left(0,1\right)$. For instance
$h_r\left(t\right) = t^r$, $t\in \left(0,1\right)$ has that
property. In particular, for $r\le 1$, if $f$ is a non-negative
${\rm{M_tN_t}}$-convex function on $I$, then for $x,y\in I$, $t
\in (0,1)$ we have
\begin{align*}
f \left({\rm{M}}\left(t;x, y\right)\right) \le {\rm{N}}\left(t;f
(x), f (y)\right) \le {\rm{N}}\left(t^r;f (x), f (y)\right)=
{\rm{N}}\left(h\left(t\right);f (x), f (y)\right),
\end{align*}
for all $r\le 1$ and $t\in \left(0,1\right)$. So that $f$ is
 ${\rm{M_tN_h}}$-convex. Similarly, if the function satisfies
the property $h\left(t\right) \le t$ for $t \in \left(0,1\right)$,
then $f$ is a non-negative  ${\rm{M_tN_h}}$-concave. In
particular, for $r\ge 1$, the function $h_r(t)$ has that property
for $t\in \left(0,1\right)$.
 So that if $f$ is a non-negative ${\rm{M_tN_t}}$-concave
function on $I$, then for $x,y\in I$, $t \in (0,1)$ we have
\begin{align*}
f \left({\rm{M}}\left(t;x, y\right)\right) \ge {\rm{N}}\left(t;f
(x), f (y)\right) \ge {\rm{N}}\left(t^r;f (x), f
(y)\right)={\rm{N}}\left(h\left(t\right);f (x), f (y)\right),
\end{align*}
for all $r\ge 1$ and $t\in \left(0,1\right)$, which means that $f$
is ${\rm{M_tN_h}}$-concave.
\end{remark}

\begin{remark}
There exists an $h$-${\rm{MN}}$-convex function which is
${\rm{MN}}$-convex.  As  shown by Varo\v{s}anec (see Examples 6
and 7 in  \cite{V}), one can generate $h$-${\rm{MN}}$-convex
functions  but not ${\rm{MN}}$-convex.
\end{remark}

Next, we give an extended  generalization of Theorem 2.4 in
\cite{AVV}. This simply can help to illustrate the concept of
$h$-${\rm{MN}}$-convex functions.
\begin{theorem}\label{thm1}
Let $h: J\to \left(0,\infty\right)$ be a positive function. $f:I
\to \left(0, \infty\right)$ be any function. In parts (4)--(9),
let $I = (0, \tau)$, $0<\tau< \infty$.
\begin{enumerate}
\item $f$ is  ${\rm{A_tA_h}}$-convex (-concave) if and only if\, $f$\,
is $h$-convex ($h$-concave).

\item $f$ is  ${\rm { A_tG_h}}$-convex (-concave) if and only if \,$\log
f$ \,is $h$-convex (-concave).

\item $f$ is  ${\rm{A_tH_h}}$-convex (-concave) if and only if  \,
$\frac{1}{f(x)}$  is $h$-concave (-convex).

\item $f$ is ${\rm{ G_tA_h}}$-convex (-concave) on $I$ if and only
if\, $f\left(\tau {\rm{e}}^{-t }\right)$ is $h$-convex (-concave).

\item $f$ is  ${\rm{G_tG_h}}$-convex (-concave) if and only if \,$\log
f\left(\tau {\rm{e}}^{-t }\right)$  is $h$-convex (-concave) on
$\left(0,\infty\right)$.

\item $f$ is ${\rm{G_tH_h}}$-convex (-concave) if and only if
\,$\frac{1}{f\left(\tau {\rm{e}}^{-t }\right)}$ is $h$-concave
(-convex) on $\left(0,\infty\right)$.

\item $f$ is ${\rm{H_tA_h}}$-convex (-concave) if and only if
\,$f\left({\textstyle{1 \over x }}\right)$ is  $h$-convex
(-concave) on $\left({\textstyle{1 \over \tau }},\infty\right)$.

 \item $f$ is ${\rm{H_tG_h}}$-convex (-concave) if and only if\,
 $\log f\left({\textstyle{1 \over x }}\right)$ is  $h$-convex (-concave) on
 $\left({\textstyle{1 \over \tau }},\infty\right)$.

\item $f$ is ${\rm{H_tH_h}}$-convex (-concave) if and only if\,
$\frac{1}{f\left({\textstyle{1 \over x }}\right)}$ is $h$-concave
(-convex) on $\left({\textstyle{1 \over \tau }},\infty\right)$.
\end{enumerate}
\end{theorem}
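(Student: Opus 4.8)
The plan is to derive all nine equivalences from the single base case (1), which is nothing but Definition~\ref{def5} written out: with $h=\mathrm{id}$ the domain mean is $A_t(\alpha,\beta)=t\alpha+(1-t)\beta$, while the target mean is $A_h\bigl(f(x),f(y)\bigr)=h(t)f(x)+h(1-t)f(y)$, so the inequality \eqref{eq2.4} with $\mathrm M=A_t$ and $\mathrm N=A_h$ says exactly that $f$ is $h$-convex (and its $\ge$ version says $f$ is $h$-concave). Each of the remaining eight cases is then obtained by composing two independent, elementary reductions: a monotone substitution \emph{on the values} of $f$, which replaces the target mean $\mathrm N\in\{A,G,H\}$ by $A_h$, and a change of variable \emph{in the argument} of $f$, which replaces the domain mean $\mathrm M\in\{A,G,H\}$ by $A_t$.

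For the value side I would use that $G_h$ and $H_h$ are built from $A_h$ through the increasing bijection $u\mapsto e^{u}$ and the order-reversing bijection $u\mapsto 1/u$ of $(0,\infty)$; concretely,
\begin{align*}
G_h\bigl(f(x),f(y)\bigr)=\exp\Bigl(A_h\bigl(\log f(x),\log f(y)\bigr)\Bigr),\qquad
H_h\bigl(f(x),f(y)\bigr)=\frac{1}{A_h\!\left(\tfrac{1}{f(x)},\tfrac{1}{f(y)}\right)}.
\end{align*}
Since $\log$ is increasing, $f\bigl(\mathrm M(t;x,y)\bigr)\le G_h\bigl(f(x),f(y)\bigr)$ is equivalent to $\log f\bigl(\mathrm M(t;x,y)\bigr)\le A_h\bigl(\log f(x),\log f(y)\bigr)$, i.e.\ to the $A_h$-inequality for $\log f$; and since $u\mapsto 1/u$ reverses order, $f\bigl(\mathrm M(t;x,y)\bigr)\le H_h\bigl(f(x),f(y)\bigr)$ is equivalent to $\tfrac{1}{f(\mathrm M(t;x,y))}\ge A_h\!\left(\tfrac{1}{f(x)},\tfrac{1}{f(y)}\right)$, i.e.\ to the \emph{reversed} $A_h$-inequality for $1/f$. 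Taking $\mathrm M=A_t$ in these two observations already proves (2) and (3), with convexity and concavity interchanged in (3) because of the reversal.

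For the argument side I would, assuming $f$ is defined on $I=(0,\tau)$, introduce the change of variable that linearises the mean in question. For the geometric domain mean put $F(x):=f\bigl(\tau e^{-x}\bigr)$ for $x\in(0,\infty)$; writing $\alpha=\tau e^{-x}$ and $\beta=\tau e^{-y}$ one has $\alpha^{t}\beta^{1-t}=\tau e^{-(tx+(1-t)y)}$, so $G_t(\alpha,\beta)$ is carried to $tx+(1-t)y$ and $f(\alpha),f(\beta)$ become $F(x),F(y)$. For the harmonic domain mean put $\widetilde F(x):=f\!\left(\tfrac1x\right)$ for $x\in\left(\tfrac1\tau,\infty\right)$; writing $\alpha=\tfrac1x$ and $\beta=\tfrac1y$ one has $\dfrac{\alpha\beta}{t\alpha+(1-t)\beta}=\dfrac{1}{(1-t)x+ty}$, so $H_t(\alpha,\beta)$ is again carried to an arithmetic mean, with $f(\alpha),f(\beta)$ becoming $\widetilde F(x),\widetilde F(y)$. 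Both substitutions are bijections onto the asserted intervals, and since the defining inequalities are quantified over all $t\in[0,1]$ the relabelling $t\leftrightarrow 1-t$ appearing in the harmonic case is immaterial. Hence rewriting \eqref{eqhGA} in the variable $x$ gives $F\bigl(tx+(1-t)y\bigr)\le h(t)F(x)+h(1-t)F(y)$, which is (4); rewriting \eqref{eqhHA} the same way gives (7).

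It remains to combine the two reductions. For (5) and (8) I would start from \eqref{eqhGG}, respectively \eqref{eqhHG}, apply $\log$ on the value side (as in (2)) and then the appropriate domain substitution, arriving at the $h$-convexity of $\log F$, respectively of $\log\widetilde F$. For (6) and (9) I would start from \eqref{eqhGH}, respectively \eqref{eqhHH}, take reciprocals on the value side (as in (3)), which reverses the inequality, and then perform the domain substitution, arriving at the $h$-concavity of $1/F$, respectively of $1/\widetilde F$. The concave variant of each of the nine statements follows by reversing all inequalities throughout, using once more that $\log$ preserves and $u\mapsto 1/u$ reverses order. I do not expect any step to present a real difficulty; the only matters needing care are keeping track of the $t\leftrightarrow1-t$ interchange that surfaces in the harmonic cases, and checking that each transformation --- $\log$, reciprocal, $x\mapsto\tau e^{-x}$, and $x\mapsto 1/x$ --- is legitimate, which it is since $f>0$ everywhere and the substitutions are bijections onto the claimed intervals.
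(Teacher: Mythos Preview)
Your proposal is correct and follows essentially the same approach as the paper: both use $\log$ and $u\mapsto 1/u$ on the value side to reduce $G_h,H_h$ to $A_h$, and the substitutions $x\mapsto\tau e^{-x}$ and $x\mapsto 1/x$ on the argument side to reduce $G_t,H_t$ to $A_t$. Your presentation is simply more systematic, framing the nine cases as compositions of two independent reductions, whereas the paper verifies each case separately with the same transformations.
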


\begin{proof}

\begin{enumerate}
\item Follows by definition.

\item  Employing \eqref{eqhAG} in the Definition \ref{def5}, we
have
\begin{align*}
&f \left(A_t\left( {a,b} \right)\right) \le  (\ge) \,
G\left(h(t);f (a), f (b)\right)
\\
&\Leftrightarrow f \left(\left( {1 - t} \right)a + tb\right) \le
(\ge) \,\left[f\left( {a} \right)\right]^{h\left(1-t\right)}
\left[f\left( {b} \right)\right]^{h\left(t\right)}
\\
&\Leftrightarrow \log f \left(\left( {1 - t} \right)a + tb\right)
\le (\ge) \,h\left(1-t\right)\log \left[f\left(a\right)\right] +
h\left(t\right)\log \left[f\left(b\right)\right],
\end{align*}
which proves the result.

\item Employing \eqref{eqhAH} in the Definition \ref{def5}, we
have
\begin{align*}
&f \left(A_t\left( {a,b} \right)\right) \le  (\ge) \,
H\left(h(t);f (a), f (b)\right)
\\
&\Leftrightarrow f \left(\left( {1 - t} \right)a + tb\right) \le
(\ge) \frac{{f\left( a \right)f\left( b \right)}}{{h\left( t
\right)f\left( a  \right) + h\left( {1 - t} \right)f\left(b
\right)}}
\\
&\Leftrightarrow   \frac{1}{f \left(\left( {1 - t} \right)a +
tb\right)} \ge (\le) \, \frac{h\left(1-t\right)}{f\left(a\right)}
+ \frac{h\left(t\right)}{f\left(b\right)},
\end{align*}
which proves the result.

\item Employing \eqref{eqhGA} in the Definition \ref{def5} and
substituting $a = \tau e^{ - r}$ and $b = \tau e^{ - s} $, we have
\begin{align*}
&f \left(G_t\left( {a,b} \right)\right) \le  (\ge) \,
A\left(h(t);f (a), f (b)\right)
\\
&\Leftrightarrow f \left(a^{1 - t} b^t\right) \le (\ge) h\left(
{1-t} \right)f\left( a\right) + h\left( { t} \right)f\left( b
\right)
\\
&\Leftrightarrow f \left(\tau e^{ - \left[ {r\left( {1 - t}
\right) + st} \right]}
 \right) \le (\ge) h\left( {1-t} \right)f\left( \tau e^{ - r}\right) +
h\left( { t} \right)f\left(  \tau e^{ - s} \right),
\end{align*}
which proves the result.

\item Employing \eqref{eqhGG} in the Definition \ref{def5} and
 substituting $a = \tau e^{ - r}$ and $b = \tau e^{ - s} $, we
have
\begin{align*}
&f \left(G_t\left( {a,b} \right)\right) \le  (\ge) \,
G\left(h(t);f (a), f (b)\right)
\\
&\Leftrightarrow f \left(a^{1 - t} b^t\right) \le
(\ge)\left[f\left( a\right)\right]^{ h\left( {1-t} \right)}
\left[f\left( b \right)\right]^{ h\left( {t} \right)}
\\
&\Leftrightarrow \log f \left(\tau e^{ - \left[ {r\left( {1 - t}
\right) + st} \right]}
 \right) \le (\ge) h\left( {1-t} \right)\log f\left( \tau e^{ - r}\right) +
h\left( { t} \right)\log  f\left(  \tau e^{ - s} \right),
\end{align*}

\item Employing \eqref{eqhGH} in the Definition \ref{def5}  and
 substituting $a = \tau e^{ - r}$ and $b = \tau e^{ - s} $, we
have, we have
\begin{align*}
&f \left(G_t\left( {a,b} \right)\right)  \le  (\ge) \,
H\left(h(t);f (a), f (b)\right)
\\
&\Leftrightarrow f \left(a^{1 - t} b^t\right) \le (\ge)
\frac{{f\left( a \right)f\left( b \right)}}{{h\left( t
\right)f\left( a  \right) + h\left( {1 - t} \right)f\left(b
\right)}}
\\
&\Leftrightarrow   \frac{1}{f \left(a^{1 - t} b^t\right)} \ge
(\le) \, \frac{h\left(1-t\right)}{f\left(a\right)} +
\frac{h\left(t\right)}{f\left(b\right)}
\\
&\Leftrightarrow  \frac{1}{f \left(\tau e^{ - \left[ {r\left( {1 -
t} \right) + st} \right]}\right)} \ge (\le) \,
\frac{h\left(1-t\right)}{f\left(\tau e^{ - r} \right)} +
\frac{h\left(t\right)}{f\left(\tau e^{ - s} \right)},
\end{align*}
which proves the result.

\item  Let $g(x)=f\left(\frac{1}{x}\right)$ and let $a,b \in
\left( {\textstyle{1 \over \tau}} ,\infty\right)$ with $a<b$, so
that $a,b\in \left(0,\tau\right)$. Then $f$ is  ${\rm{H_tA_h}}$-convex
(-concave) on $\left(0,\tau\right)$ if and only if
\begin{align*}
&f \left(\frac{1}{H_t\left( {a,b} \right)}\right) \le  (\ge) \,
A\left(h(t);\frac{1}{f\left(a\right)},
\frac{1}{f\left(b\right)}\right)
\\
&\Leftrightarrow f\left( {\frac{1}{\textstyle{ab \over ta+(1-t)b
}}} \right) \le \left(  \ge \right)h\left( t \right)f\left(
{\frac{1}{b}} \right) + h\left( {1 - t} \right)f\left(
{\frac{1}{a}} \right)
\\
&\Leftrightarrow  g\left( {\frac{{ab}}{{ta + \left( {1 - t}
\right)b}}} \right) \le \left(  \ge  \right)h\left( 1-t
\right)g\left( a \right) + h\left( {t} \right)g\left( b \right),
\end{align*}
which proves the result.

\item  Let $g(x)=\log f\left(\frac{1}{x}\right)$ and let $a,b \in
\left( {\textstyle{1 \over \tau }} ,\infty\right)$ with $a<b$, so
that $a,b\in \left(0, \tau\right)$. Then $f$ is
 ${\rm{H_tG_h}}$-convex (-concave) on $\left(0,\tau\right)$ if and
only if
\begin{align*}
&f \left(\frac{1}{H_t\left( {a,b} \right)}\right) \le  (\ge) \,
G\left(h(t);f (a), f (b)\right)
\\
&\Leftrightarrow   f\left( {\frac{t}{b} + \frac{{\left( {1 - t}
\right)}}{a}} \right) \le \left(  \ge  \right) \left[f\left(
{\frac{1}{b}} \right) \right]^{h\left( t \right)}  \left[f\left(
{\frac{1}{a}} \right) \right]^{h\left( {1 - t} \right)}
\\
&\Leftrightarrow  \log f\left( {\frac{t}{b} + \frac{{\left( {1 -
t} \right)}}{a}} \right) \le \left(  \ge  \right) h\left( t
\right) \log f\left( {\frac{1}{b}} \right) + h\left( {1 - t}
\right)\log f\left( {\frac{1}{a}} \right)
\\
&\Leftrightarrow  g\left( {\frac{{ab}}{{ta + \left( {1 - t}
\right)b}}} \right) \le \left(  \ge  \right)h\left( t
\right)g\left( b \right) + h\left( {1 - t} \right)g\left( a
\right),
\end{align*}
which proves the result.

\item Let $g(x)= \frac{1}{f\left(\frac{1}{x}\right)}$ and let $a,b
\in \left( {\textstyle{1 \over \tau }} ,\infty\right)$ with $a<b$,
so that $a,b\in \left(0,\tau\right)$. Then $f$ is
 ${\rm{H_tH_h}}$-convex (-concave) on $\left(0,\tau\right)$ if and
only if
\begin{align*}
&f \left(\frac{1}{H_t\left( {a,b} \right)}\right) \le  (\ge) \,
H\left(h(t);f (a), f (b)\right)
\\
&\Leftrightarrow   f\left( {\frac{t}{b} + \frac{{\left( {1 - t}
\right)}}{a}} \right) \le \left(  \ge  \right) \frac{{f\left(
\frac{1}{a} \right)f\left( \frac{1}{b} \right)}}{{h(1-t)f\left(
\frac{1}{b} \right) + h\left( { t} \right)f\left( \frac{1}{a}
\right)}}
\\
&\Leftrightarrow  \frac{1}{f\left( {\frac{t}{b} + \frac{{\left( {1
- t} \right)}}{a}} \right) } \ge \left(  \le  \right)
\frac{{h(1-t)f\left( \frac{1}{b} \right) + h\left( {t}
\right)f\left( \frac{1}{a} \right)}}{f\left( \frac{1}{a}
\right)f\left( \frac{1}{b} \right)}
\\
&\Leftrightarrow  \frac{1}{f\left(  {  \frac{{ta + \left( {1 - t}
\right)b}}{{ab}} } \right) } \ge \left(  \le \right) \frac{{h(1-t)
}}{f\left( \frac{1}{a} \right)}+ \frac{{  h\left( {t} \right)
}}{f\left( \frac{1}{b} \right)}
\\
&\Leftrightarrow  g\left( {\frac{{ab}}{{ta + \left( {1 - t}
\right)b}}} \right) \ge \left(  \le  \right) h\left( {1 - t}
\right)g\left( a \right)+h\left( t \right)g\left( b \right),
\end{align*}
which proves the result.
\end{enumerate}
\end{proof}
\begin{proposition}
Let $h: J\to \left(0,\infty\right)$ be a non-negative   function.
Then
\begin{align*}
\begin{array}{*{20}c}
   { f \,\,\text{is    ${\rm{A_tH_h}}$-convex}  } &  \Longrightarrow  & {f \,\,\text{is    ${\rm{A_tG_h}}$-convex} } &  \Longrightarrow  & {f \,\,\text{is    ${\rm{A_tA_h}}$-convex} }  \\
    \Downarrow f \nearrow  &   &  \Downarrow f \nearrow   &   &  \Downarrow f \nearrow   \\
   {f \,\,\text{is  ${\rm{G_tH_h}} $-convex} } &  \Longrightarrow  & {f \,\,\text{is    ${\rm{G_tG_h}}$-convex} } &  \Longrightarrow  & {f \,\,\text{is    ${\rm{G_tA_h}}$-convex} }  \\
    \Downarrow f \nearrow  &   &  \Downarrow f \nearrow  &   &  \Downarrow f \nearrow   \\
    { f \,\,\text{is   ${\rm{H_tH_h}} $-convex}   } &  \Longrightarrow  & {f \,\,\text{is   ${\rm{H_tG_h}}$-convex}  } &  \Longrightarrow  & {f \,\,\text{is   ${\rm{H_tA_h}}$-convex} }.  \\
\end{array}
\end{align*}
By $f \nearrow$  we mean that $f$ is increasing and by $f \searrow$  we mean that $f$ is decreasing. For $h$-concavity and
decreasing monotonicity, the implications are reversed.
\end{proposition}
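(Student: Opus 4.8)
The plan is to reduce everything to two orderings that are already on the table: the generalized AM--GM--HM inequality \eqref{AM-GM-HM}, $H_h(a,b)\le G_h(a,b)\le A_h(a,b)$ for $a,b>0$, which I will feed the outer values $(f(x),f(y))$, and the classical chain $H_t(x,y)\le G_t(x,y)\le A_t(x,y)$ applied to the inner arguments $(x,y)$. I would first dispose of the three horizontal arrows, which only weaken the right-hand side and hence need no hypothesis on $f$; then handle the three vertical arrows, where the inner mean changes and the monotonicity of $f$ is precisely what pushes the ordering through $f$; finally I would compose a horizontal with a vertical step to link any two boxes of the diagram in the direction drawn.

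\emph{Horizontal arrows.} Fix $\mathrm{M}\in\{A_t,G_t,H_t\}$ and suppose $f$ is $\mathrm{M}H_h$-convex, so $f(\mathrm{M}(t;x,y))\le H_h(f(x),f(y))$ for all $x,y\in I$, $t\in[0,1]$. Applying \eqref{AM-GM-HM} with $a=f(x)>0$ and $b=f(y)>0$ gives $H_h(f(x),f(y))\le G_h(f(x),f(y))\le A_h(f(x),f(y))$, whence $f(\mathrm{M}(t;x,y))\le G_h(f(x),f(y))$ and $f(\mathrm{M}(t;x,y))\le A_h(f(x),f(y))$; that is, $f$ is $\mathrm{M}G_h$-convex and $\mathrm{M}A_h$-convex. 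Letting $\mathrm{M}=A_t,G_t,H_t$ in turn produces all three rows, and each single arrow is literally the substitution of one inequality from \eqref{AM-GM-HM} into the matching characterization among \eqref{eqhAG}--\eqref{eqhHH}.

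\emph{Vertical arrows.} Fix $\mathrm{N}\in\{A_h,G_h,H_h\}$, assume $f$ is increasing, and suppose $f$ is $A_t\mathrm{N}_h$-convex, so $f(A_t(x,y))\le \mathrm{N}_h(f(x),f(y))$. Since $G_t(x,y)\le A_t(x,y)$ for $x,y>0$ and both points lie in the interval $I$, monotonicity of $f$ yields $f(G_t(x,y))\le f(A_t(x,y))\le \mathrm{N}_h(f(x),f(y))$, i.e.\ $f$ is $G_t\mathrm{N}_h$-convex; repeating the argument with $H_t(x,y)\le G_t(x,y)$ gives $G_t\mathrm{N}_h$-convex $\Rightarrow H_t\mathrm{N}_h$-convex. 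Letting $\mathrm{N}=A_h,G_h,H_h$ produces all three columns, and composing one horizontal step with one vertical step connects any two boxes of the grid consistently with the arrows shown.

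\emph{Concave case and the one delicate point.} For $h$-concavity every inequality in Definition \ref{def5} reverses, so \eqref{AM-GM-HM} is invoked in the opposite direction on the outer values and the horizontal arrows flip, while the inner ordering together with monotonicity flips the vertical arrows, the monotonicity hypothesis on $f$ changing correspondingly. I do not expect a genuine obstacle --- the statement is a formal consequence of the two AM--GM--HM chains --- but the one step that must be written with care is the vertical one: one has to match the direction of the convexity (resp.\ concavity) inequality to the correct monotonicity assumption on $f$, since that is the only place where a misread inequality would silently break an arrow.
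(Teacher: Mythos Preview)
Your approach is essentially the same as the paper's: both reduce the horizontal arrows to the generalized AM--GM--HM inequality \eqref{AM-GM-HM} applied to the outer values $(f(x),f(y))$, and the vertical arrows to the classical chain $H_t\le G_t\le A_t$ on the inner argument together with the monotonicity of $f$. The paper's proof is terser (it just invokes Definition~\ref{def5} and the ordering $H_h\le G_h\le A_h$, then records one sample computation), while you have spelled out the two mechanisms separately and noted how they compose; this is the same argument, organized more explicitly.
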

\begin{proof}
The proof of each statement follows from Definition \ref{def5} and
by noting that  for an increasing function $h$ we have $H_h\left( {a,b} \right)\le G_h\left( {a,b}
\right) \le A_h\left( {a,b} \right)$, for all $t\in [0,1]$.
Furthermore, and for instance we note that if  $f$ is
 ${\rm{A_tH_h}}$-convex, therefore we have
\begin{align*}
f\left( {{\rm{A}}_\alpha  \left( {x,y} \right)} \right)  = f\left(
{\alpha x + \left( {1 - \alpha } \right)y} \right) &\le
\frac{{f\left( x \right)f\left( y \right)}}{{h\left( {1 - \alpha }
\right)f\left( x \right) + h\left( \alpha  \right)f\left( y
\right)}}
\\
&= \frac{1}{{\frac{{h\left( {1 - \alpha } \right)}}{{f\left( y
\right)}} + \frac{{h\left( \alpha \right)}}{{f\left( x \right)}}}}
\\
&= {\rm H}\left( {h\left( \alpha  \right),f\left( x
\right),f\left( y \right)} \right),
\end{align*}
which is employing for  $g(t)=\frac{1}{f(t)}$, i.e.,
\begin{align*}
g\left( {{\rm{A}}_\alpha  \left( {x,y} \right)} \right) = g\left(
{\alpha x + \left( {1 - \alpha } \right)y} \right) =
\frac{1}{{f\left( {\alpha x + \left( {1 - \alpha } \right)y}
\right)}} &\ge \frac{{h\left( {1 - \alpha } \right)}}{{f\left( y
\right)}} + \frac{{h\left( \alpha  \right)}}{{f\left( x \right)}}
\\
&= h\left( {1 - \alpha } \right)g\left( y \right) + h\left( \alpha
\right)g\left( x \right)
\\
&={\rm A}\left( {h\left( \alpha  \right),g\left( x \right),g\left(
y \right)} \right),
\end{align*}
and this shows that $g$ is  ${\rm{A_tA_h}}$-concave.
\end{proof}

Thus, one can see the implications in Theorem \ref{thm1} are strict,
as shown by the following example:
\begin{example}
    Let $h$ be a non-negative function such that $h\left(t\right)\ge
    t$ for all $t \in \left(0,1\right)$. In particular, let
    $h\left(t\right)=h_k\left(t\right)=t^k$, $k\le 1$ and $t\in
    (0,1)$. The functions
    \begin{enumerate}
        \item $f\left(x\right)=\cosh \left(x\right)$ is
         ${\rm{A_tG_h}}$-convex, hence
         ${\rm{G_tG_h}}$-convex and  ${\rm{H_tG_h}}$-convex,
        on $(0, \infty)$. But it is not  ${\rm{A_tH_h}}$--convex,
        nor  ${\rm{G_tH_h}}$--convex, nor
         ${\rm{H_tH_h}}$--convex.

        \item $f\left(x\right)=\arcsin \left(x\right)$ is
         ${\rm{A_tA_h}}$-convex but it is ${\rm{A_tG_h}}$-concave
        for all $0\le x \le 1$.

        \item  $f\left(x\right)={\rm{e}}^x $ is
         ${\rm{G_tG_h}}$-convex and  ${\rm{H_tG_h}}$-convex,
        but neither  ${\rm{G_tH_h}}$-convex nor
         ${\rm{H_tH_h}}$-convex, for all $x>0$.

        \item $f\left(x\right)=\log\left(1+x\right)$ is
         ${\rm{G_tA_h}}$-convex  but
         ${\rm{G_tG_h}}$-concave for all $0<x<1$.

        \item $f\left(x\right)={\rm{e}}^{-x}$ is
     ${\rm{H_tA_h}}$-convex for $k \le \frac{1}{2}$ but not
         ${\rm{H_tG_h}}$-convex for all $0<x<1$. Also, $f$ is
         ${\rm{H_tA_t}}$-convex  but not  ${\rm{H_tG_t}}$-convex for
        all $x>1$.
    \end{enumerate}
\end{example}

\begin{proposition}
Let $h_1,h_2: J\to \left(0,\infty\right)$ be two positive positive
function with the property that $h_2(t)\le h_1(t)$ for all $t\in
(0,1)$. If $f$ is  ${\rm{{\rm{M_tN_{h_2}}}}}$-convex then
 ${\rm{{\rm{M_tN_{h_1}}}}}$-convex and if $f$ is
 ${\rm{{\rm{M_tN_{h_1}}}}}$-concave  then
${\rm{{\rm{M_tN_{h_2}}}}}$-concave.
\end{proposition}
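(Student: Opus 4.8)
The plan is to reduce the proposition to one elementary monotonicity fact about the outer mean and then invoke Definition \ref{def5}. Fix $x,y\in I$ and $t\in(0,1)$ and set $u=f(x)>0$, $v=f(y)>0$. The point is that each admissible outer mean, viewed as a function of the pair $\bigl(h(t),h(1-t)\bigr)$, is coordinatewise monotone: ${\rm A}(h(t);u,v)=h(t)u+h(1-t)v$ is nondecreasing; $1/{\rm H}(h(t);u,v)=h(1-t)/v+h(t)/u$ is nondecreasing, so ${\rm H}$ is nonincreasing; and $\log{\rm G}(h(t);u,v)=h(t)\log u+h(1-t)\log v$ is monotone with a sense determined by the signs of $\log u,\log v$. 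Since the hypothesis $h_2\le h_1$ on $(0,1)$ forces both $h_2(t)\le h_1(t)$ and $h_2(1-t)\le h_1(1-t)$, in each case we obtain an inequality relating $N(h_2(t);u,v)$ and $N(h_1(t);u,v)$, and the conclusion follows by a two-step chain; for instance, for $N={\rm A}$,
\[
f\bigl({\rm M}(t;x,y)\bigr)\le {\rm A}\bigl(h_2(t);f(x),f(y)\bigr)\le {\rm A}\bigl(h_1(t);f(x),f(y)\bigr),
\]
so ${\rm{M_tA_{h_2}}}$-convexity gives ${\rm{M_tA_{h_1}}}$-convexity, and reversing both inequalities gives the concave statement.

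I would then organise the argument as three cases indexed by the outer mean $N$, each reduced to the arithmetic case through the characterisations of Theorem \ref{thm1}. For $N={\rm A}$ the chain above is already a complete proof — this is essentially the observation, used in Remark \ref{remark2}, that $h$-convexity of a nonnegative function is preserved when $h$ is enlarged. For $N={\rm G}$ one applies that observation to $\log f$, $\log f(\tau e^{-t})$ or $\log f(1/x)$ according to the inner mean $M$, as in Theorem \ref{thm1}(2),(5),(8); for $N={\rm H}$ one applies it to $1/f$, $1/f(\tau e^{-t})$ or $1/f(1/x)$ as in Theorem \ref{thm1}(3),(6),(9).

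The step I expect to demand the most care is checking that the \emph{direction} of the monotonicity is compatible with the orientation ($\le$ versus $\ge$) of the defining inequality in each of the nine classes, and with the sign of the transformed function. For $N={\rm A}$ enlarging $h$ enlarges the upper bound, which is exactly what a ``convex'' inequality requires, so convexity passes from $h_2$ to $h_1$ and concavity from $h_1$ to $h_2$. For $N={\rm H}$ the reciprocal entering the reduction reverses the monotonicity, and for $N={\rm G}$ the transformed function $\log f$ need not be nonnegative, so its sign has to be tracked; these cases therefore require the orientation to be read off from this sign analysis. Once that bookkeeping is settled, every case closes by the two-line chain displayed above with no further computation.
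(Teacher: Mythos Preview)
The paper's proof is literally the single chain
\[
f\bigl({\rm M}(t;x,y)\bigr)\le(\ge)\;{\rm N}\bigl(h_2(t);f(x),f(y)\bigr)\le(\ge)\;{\rm N}\bigl(h_1(t);f(x),f(y)\bigr),
\]
with no further comment; your display for $N={\rm A}$ reproduces this exactly, so for that case you and the paper agree.

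Where you go beyond the paper is in flagging the orientation issue for $N={\rm G}$ and $N={\rm H}$, and you are right to do so --- but your final claim that ``every case closes'' is too optimistic, and in fact the bookkeeping you propose shows the proposition \emph{fails} as stated for $N={\rm H}$. Since $h_2\le h_1$ on $(0,1)$ gives $h_2(t)\le h_1(t)$ and $h_2(1-t)\le h_1(1-t)$, the denominator of ${\rm H}\bigl(h(t);u,v\bigr)=uv/\bigl(h(1-t)u+h(t)v\bigr)$ increases with $h$, so ${\rm H}\bigl(h_1(t);u,v\bigr)\le {\rm H}\bigl(h_2(t);u,v\bigr)$: the upper bound \emph{shrinks} when you pass from $h_2$ to $h_1$, and ${\rm M_tH_{h_2}}$-convexity does not imply ${\rm M_tH_{h_1}}$-convexity. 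Concretely, $f(x)=1/x$ satisfies the ${\rm A_tH_t}$-convexity inequality with equality, but is not ${\rm A_tH_{h_1}}$-convex for $h_1(t)=2t$. The reduction via Theorem~\ref{thm1}(3),(6),(9) that you suggest confirms this: $1/f$ being $h_2$-concave does not yield $h_1$-concavity when $h_2\le h_1$; the implication runs the other way.

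The $N={\rm G}$ case has the same defect whenever $f$ takes values below $1$, since then $\log f<0$ and enlarging $h$ can shrink $u^{h(t)}v^{h(1-t)}$. So the honest conclusion of your sign analysis is not that all nine cases close, but that the proposition (and the paper's one-line proof) is correct only for $N={\rm A}$, and needs an extra hypothesis such as $f\ge 1$ for $N={\rm G}$; for $N={\rm H}$ the inequality goes the wrong way and the statement should be reversed.
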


\begin{proof}
From Definition \ref{def5} we have
\begin{align*}
f \left({\rm{M}}\left(t;x, y\right)\right) \le (\ge)\,
{\rm{N}}\left(h_2(t);f (x), f (y)\right) \le (\ge)\,
{\rm{N}}\left(h_1(t);f (x), f (y)\right),
\end{align*}
which is required.
\end{proof}

\begin{proposition}
If $f$ and $g$ are two  ${\rm{{\rm{M_tN_h}}}}$-convex and $\lambda
>0$, then $f+g$, $\lambda f$ and $\max\{f,g\}$.
\end{proposition}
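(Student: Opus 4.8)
The plan is to check directly from Definition \ref{def5} that each of $f+g$, $\lambda f$ and $\max\{f,g\}$ again satisfies \eqref{eq2.4}, i.e. lies in $\overline{\mathcal{MN}}(h,I)$. The argument is organised around three elementary facts about the value-mean $\mathrm N$: it is increasing in each of its two value slots, it is superadditive there, and (for $\mathrm N=A_h$ or $\mathrm N=H_h$) it is positively homogeneous of degree one in $(a,b)$. Throughout fix $x,y\in I$, $t\in[0,1]$, and write $m=\mathrm M(t;x,y)$.

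For $\max\{f,g\}$ only monotonicity is needed. Since $f(m)\le \mathrm N(h(t);f(x),f(y))$ and $g(m)\le \mathrm N(h(t);g(x),g(y))$, and $\mathrm N$ is increasing in its value slots,
\begin{align*}
\max\{f,g\}(m)=\max\{f(m),g(m)\}\le \mathrm N\bigl(h(t);\max\{f(x),g(x)\},\max\{f(y),g(y)\}\bigr),
\end{align*}
which is precisely \eqref{eq2.4} for $\max\{f,g\}$; this works uniformly for $\mathrm N\in\{A_h,G_h,H_h\}$.

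For $f+g$ I would add the two defining inequalities and then use superadditivity of $\mathrm N$ in $(a,b)$, namely $\mathrm N(h(t);a_1,b_1)+\mathrm N(h(t);a_2,b_2)\le \mathrm N(h(t);a_1+a_2,b_1+b_2)$. For $\mathrm N=A_h$ this is an equality (linearity). For $\mathrm N=H_h$ it follows because $(a,b)\mapsto ab/(h(1-t)a+h(t)b)$ is homogeneous of degree one and concave — a short Hessian computation gives a negative diagonal and vanishing determinant — and nonnegative homogeneous-degree-one concave functions are superadditive. For $\mathrm N=G_h$ it reduces to
\begin{align*}
a_1^{\,h(t)}b_1^{\,h(1-t)}+a_2^{\,h(t)}b_2^{\,h(1-t)}\le (a_1+a_2)^{h(t)}(b_1+b_2)^{h(1-t)},
\end{align*}
which I would prove by first applying H\"older's inequality with the normalised exponents $h(t)/(h(t)+h(1-t))$, $h(1-t)/(h(t)+h(1-t))$ and then using the monotonicity of $\ell^r$-norms in $r$; this last step needs $h(t)+h(1-t)\ge 1$, which is automatic under the usual working hypothesis $h(\alpha)\ge\alpha$ on $(0,1)$. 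For $\lambda f$, multiply the defining inequality by $\lambda$; for $\mathrm N\in\{A_h,H_h\}$ degree-one homogeneity gives $\lambda\,\mathrm N(h(t);a,b)=\mathrm N(h(t);\lambda a,\lambda b)$ and we are done.

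The step I expect to be the real obstacle is the pair $(\lambda f,\,\mathrm N=G_h)$: here $\mathrm N(h(t);\lambda a,\lambda b)=\lambda^{\,h(t)+h(1-t)}\mathrm N(h(t);a,b)$, so the inequality $\lambda\,\mathrm N(h(t);a,b)\le \mathrm N(h(t);\lambda a,\lambda b)$ needed in \eqref{eq2.4} amounts to $\lambda\le\lambda^{\,h(t)+h(1-t)}$. This holds for every $\lambda>0$ exactly when $h(t)+h(1-t)=1$ for all $t$ (e.g.\ $h=\mathrm{id}$); in general one must restrict to $\lambda\ge 1$ when $h(\alpha)\ge\alpha$ on $(0,1)$ (and to $\lambda\le 1$ in the opposite regime). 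With this proviso recorded, collecting the three cases finishes the proof.
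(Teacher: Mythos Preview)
The paper's own proof is a single sentence: ``The proof follows by Definition~\ref{def5}.'' Your proposal goes far beyond this, splitting into cases according to the value-mean $\mathrm N\in\{A_h,G_h,H_h\}$ and checking the required monotonicity, superadditivity, and homogeneity properties explicitly. In doing so you have correctly identified something the paper glosses over: the claims for $f+g$ and especially $\lambda f$ are \emph{not} automatic when $\mathrm N=G_h$ and $h$ is general. Your H\"older-plus-$\ell^r$-monotonicity argument for the superadditivity of $G_h$ genuinely needs $h(t)+h(1-t)\ge 1$, and your observation that $\lambda\,G_h(h(t);a,b)\le G_h(h(t);\lambda a,\lambda b)$ is equivalent to $\lambda\le\lambda^{\,h(t)+h(1-t)}$ --- which fails for, say, $h(t)=t^s$ with $s<1$ and $0<\lambda<1$ --- furnishes a concrete counterexample to the proposition as stated. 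Your analysis is therefore both correct and more careful than the paper's; the one-line appeal to Definition~\ref{def5} evidently has the case $\mathrm N=A_h$ (or $h=\mathrm{id}$) in mind, where linearity of $A_h$ in $(a,b)$ makes all three closure properties trivial, and the proposition should really be read with those restrictions in place.
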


\begin{proof}
The proof follows by  Definition \ref{def5}.
\end{proof}

 \begin{proposition}
Let $f$ and $g$ be a similarly  ordered functions. If $f$ is
 ${\rm{{\rm{A_tA_{h_1}}}}}$-convex
\emph{(${\rm{{\rm{G_tA_{h_1}}}}}$-convex,
 ${\rm{{\rm{H_tA_{h_1}}}}}$-convex)}, $g$ is
 ${\rm{{\rm{A_tA_{h_2}}}}}$-convex
\emph{(${\rm{{\rm{G_tA_{h_2}}}}}$-convex,
 ${\rm{{\rm{H_tA_{h_2}}}}}$-convex)}, respectively; and
$h\left(t\right)+h\left(1-t\right) \le c$, where
$h\left(t\right):=max\{h_1\left(t\right),h_2\left(t\right)\}$ and
$c$ is a fixed positive real number. Then the product $(fg)$ is
${\rm{{\rm{A_tA_{c\cdot h}}}}}$-convex
\emph{(${\rm{{\rm{G_tA_{c\cdot h}}}}}$-convex,
 ${\rm{{\rm{H_tA_{c\cdot h}}}}}$-convex)}, respectively.
\end{proposition}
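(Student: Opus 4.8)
The plan is to treat the three cases at once. In all of them the outer (codomain) mean ${\rm{N}}$ is the generalized arithmetic mean, and only the inner mean ${\rm{M}}$ changes; consequently the estimate produced on the right-hand side will be identical, and only the argument fed to $f$, $g$ and $fg$ differs. So I would introduce a single symbol $z=z(t)$ for the relevant ${\rm{M_t}}$-mean of $x,y$: namely $z=tx+(1-t)y$ in the ${\rm{A_t}}$-case, $z=x^{t}y^{1-t}$ in the ${\rm{G_t}}$-case, and $z=\frac{xy}{tx+(1-t)y}$ in the ${\rm{H_t}}$-case. By \eqref{eqhAG}-type unwinding (Definition \ref{def5}), the hypotheses read $f(z)\le h_1(t)f(x)+h_1(1-t)f(y)$ and $g(z)\le h_2(t)g(x)+h_2(1-t)g(y)$ for all $x,y\in I$ and $t\in[0,1]$.

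First I would replace $h_1,h_2$ by $h=\max\{h_1,h_2\}$: since $f,g$ and all the $h_i$ are non-negative, each hypothesis stays valid with $h_i$ upgraded to $h$, giving $f(z)\le h(t)f(x)+h(1-t)f(y)$ and likewise for $g$. Both bounds are non-negative and $f,g\ge 0$, so the two inequalities may be multiplied, yielding
\begin{align*}
(fg)(z)\le h(t)^{2}(fg)(x)+h(1-t)^{2}(fg)(y)+h(t)h(1-t)\bigl[f(x)g(y)+f(y)g(x)\bigr].
\end{align*}

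The key step is the similarly-ordered hypothesis: $\bigl(f(x)-f(y)\bigr)\bigl(g(x)-g(y)\bigr)\ge 0$ rearranges to $f(x)g(y)+f(y)g(x)\le (fg)(x)+(fg)(y)$. Substituting this into the cross term (legitimate since $h(t)h(1-t)\ge 0$) and collecting the coefficients of $(fg)(x)$ and $(fg)(y)$ produces
\begin{align*}
(fg)(z)\le \bigl[h(t)+h(1-t)\bigr]\bigl\{h(t)(fg)(x)+h(1-t)(fg)(y)\bigr\}.
\end{align*}
Finally, the assumption $h(t)+h(1-t)\le c$ gives $(fg)(z)\le c\,h(t)(fg)(x)+c\,h(1-t)(fg)(y)$, and since $c\cdot h$ is again non-negative, this is precisely the statement that $fg$ is ${\rm{A_tA_{c\cdot h}}}$-convex when $z=tx+(1-t)y$, and ${\rm{G_tA_{c\cdot h}}}$-convex, respectively ${\rm{H_tA_{c\cdot h}}}$-convex, for the other two choices of $z$.

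I do not expect a genuine obstacle here; the only points needing care are: keeping in mind that the codomain mean is arithmetic in every case (which is why the same product expansion works), noting that non-negativity of $f$, $g$, and $h$ is exactly what licenses multiplying the two inequalities and substituting the synchronous bound without reversing any direction, and observing that the cross-term coefficient $h(t)h(1-t)$ is precisely what the similarly-ordered inequality is tailored to absorb. The bookkeeping of the three inner means $z$ is purely notational and never enters the right-hand estimate.
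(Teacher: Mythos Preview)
Your proposal is correct and follows essentially the same route as the paper's proof: multiply the two convexity bounds, use the similarly-ordered inequality $f(x)g(y)+f(y)g(x)\le (fg)(x)+(fg)(y)$ to absorb the cross term, factor out $h(t)+h(1-t)$, and bound it by $c$. The only cosmetic difference is that the paper carries $h_1,h_2$ through the product before majorising by $h$, and treats only the ${\rm A_t}$ case explicitly, whereas you pass to $h$ at the outset and handle all three inner means simultaneously via the symbol $z$.
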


\begin{proof}
Since $f$ and $g$ are similarly ordered functions we have
\begin{align*}
f\left( x \right)g\left( x \right)+f\left( y \right)g\left( y
\right) \ge f\left( x \right)g\left( y \right) +g\left( x
\right)f\left( y \right).
\end{align*}
Let $t$ and $s$ be positive numbers such that $t+s = 1$. Then we
obtain
\begin{align*}
 &\left( {fg} \right)\left( {A_t \left( {x,y} \right)} \right) \\
  &= \left( {fg} \right)\left( {sx + ty} \right) \\
  &\le  \left[ {h_1 \left( s \right)f\left( x \right) + h_1 \left( t \right)f\left( y \right)} \right]\left[ {h_2 \left( s \right)g\left( x \right) + h_2 \left( t \right)g\left( y \right)} \right] \\
  &\le h^2 \left( s \right)f\left( x \right)g\left( x \right) + h\left( t \right)h\left( s \right)\left[ {f\left( y \right)g\left( x \right) + f\left( x \right)g\left( y \right)} \right] + h^2 \left( t \right)f\left( y \right)g\left( y \right) \\
  &\le  h^2 \left( s \right)f\left( x \right)g\left( x \right) + h\left( t \right)h\left( s \right)\left[ {f\left( x \right)g\left(x \right) + f\left( y \right)g\left( y \right)} \right] + h^2 \left( t \right)f\left( y \right)g\left( y \right) \\
  &= \left( {h\left( s \right) + h\left( t \right)} \right)\left( {h\left( s \right)\left( {fg} \right)\left( x \right) + h\left( t \right)\left( {fg} \right)\left( y \right)}
  \right)\\
  &= c\cdot h\left( s \right)\left( {fg} \right)\left( x \right) + c\cdot h\left( t \right)\left( {fg} \right)\left( y \right)\\
  &=A\left( {c \cdot h(t);\left( {fg} \right)\left( x \right),\left( {fg}
\right)\left( y \right)} \right),
\end{align*}
which shows that $(fg)$ is ${\rm{{\rm{A_tA_{c\cdot h}}}}}$-convex. The cases when $fg$ is  ${\rm{{\rm{G_tA_{c\cdot h}}}}}$-convex or  ${\rm{{\rm{H_tA_{c\cdot h}}}}}$-convex, are follow in similar manner.
\end{proof}

 \begin{corollary}\label{cor2}
Let $f$ and $g$ be an oppositely  ordered functions. If $f$ is
${\rm{{\rm{A_tA_{h_1}}}}}$-concave
\emph{(${\rm{{\rm{G_tA_{h_1}}}}}$-concave,
 ${\rm{{\rm{H_tA_{h_1}}}}}$-concave)}, $g$ is
 ${\rm{{\rm{A_tA_{h_2}}}}}$-concave
\emph{(${\rm{{\rm{G_tA_{h_2}}}}}$-concave,
 ${\rm{{\rm{H_tA_{h_2}}}}}$-concave)}, respectively; and
$h\left(t\right)+h\left(1-t\right) \ge c$, where
$h\left(t\right):=min\{h_1\left(t\right),h_2\left(t\right)\}$ and
$c$ is a fixed positive real number. Then the product $(fg)$ is
$(c\cdot h)$-${\rm{{\rm{A_tA_h}}}}$-concave
\emph{(${\rm{{\rm{G_tA_h}}}}$-concave,
${\rm{{\rm{H_tA_h}}}}$-concave)}, respectively.
\end{corollary}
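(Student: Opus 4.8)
The statement is the concavity and oppositely-ordered dual of the preceding Proposition, so the plan is to repeat that computation with every inequality reversed and with $\min$ replacing $\max$. Put $s=1-t$ throughout. The hypothesis that $f$ and $g$ are oppositely ordered gives, for all $x,y\in I$,
\begin{align*}
f(x)g(x)+f(y)g(y)\le f(x)g(y)+f(y)g(x),
\end{align*}
and this is the inequality that will be fed into the cross terms once a lower estimate has been reached.

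Fix $x,y\in I$ and $t\in(0,1)$, and set $h:=\min\{h_1,h_2\}$. Since $f$ is ${\rm{A_tA_{h_1}}}$-concave, $g$ is ${\rm{A_tA_{h_2}}}$-concave, and $f,g$ are strictly positive (so that multiplying two inequalities between positive quantities preserves their sense), I would multiply the two defining inequalities at the point $A_t(x,y)=sx+ty$ and then bound each coefficient from below using $h_1(u)h_2(v)\ge h(u)h(v)$:
\begin{align*}
(fg)(A_t(x,y))&\ge\bigl[h_1(s)f(x)+h_1(t)f(y)\bigr]\bigl[h_2(s)g(x)+h_2(t)g(y)\bigr]\\
&\ge h(s)^2 f(x)g(x)+h(s)h(t)\bigl[f(x)g(y)+f(y)g(x)\bigr]+h(t)^2 f(y)g(y)\\
&\ge h(s)^2 (fg)(x)+h(s)h(t)\bigl[(fg)(x)+(fg)(y)\bigr]+h(t)^2 (fg)(y)\\
&=\bigl(h(s)+h(t)\bigr)\bigl(h(s)(fg)(x)+h(t)(fg)(y)\bigr),
\end{align*}
where the passage to the third line uses the oppositely-ordered inequality above. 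Since $h(s)+h(t)=h(t)+h(1-t)\ge c>0$ and the second factor is positive, the last line is at least $c\,h(s)(fg)(x)+c\,h(t)(fg)(y)=A\bigl(c\cdot h(t);(fg)(x),(fg)(y)\bigr)$, which is exactly the assertion that $fg$ is ${\rm{A_tA_{c\cdot h}}}$-concave.

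For the ${\rm{G_tA_{c\cdot h}}}$- and ${\rm{H_tA_{c\cdot h}}}$-concave cases the argument is word for word the same: the geometric (resp.\ harmonic) mean enters only as the common point $G_t(x,y)$ (resp.\ $H_t(x,y)$) at which $f$, $g$ and the product $fg$ are evaluated, and has no effect on the manipulation of the right-hand side. The points needing a little care are precisely the three mirror-image changes relative to the convex case — reversing all inequalities, choosing $h=\min$ so that products of the $h_i$ get bounded \emph{from below}, and invoking positivity of $f$ and $g$ to legitimize multiplying the two concavity inequalities — but none of these is a genuine obstacle; the only real content is the algebraic regrouping into $\bigl(h(s)+h(t)\bigr)\bigl(h(s)(fg)(x)+h(t)(fg)(y)\bigr)$.
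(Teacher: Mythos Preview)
Your proof is correct and is exactly the mirror-image of the paper's proof of the preceding Proposition, with all inequalities reversed, $\max$ replaced by $\min$, and the oppositely-ordered inequality substituted for the similarly-ordered one. The paper itself gives no separate proof for this corollary, so your argument is precisely the implicit one intended.
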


\begin{proposition}
If $f$ is  ${\rm{{\rm{A_tG_{h_1}}}}}$-convex
\emph{(${\rm{{\rm{G_tG_{h_1}}}}}$-convex,
 ${\rm{{\rm{H_tG_{h_1}}}}}$-convex)} and $g$ is
 ${\rm{{\rm{A_tG_{h_2}}}}}$-convex
\emph{(${\rm{{\rm{G_tG_{h_2}}}}}$-convex,
 ${\rm{{\rm{H_tG_{h_2}}}}}$-convex)}, respectively;   and $h\left(t\right):=max\{h_1\left(t\right),h_2\left(t\right)\}$, where
$h\left(t\right)+h\left(1-t\right) \le c$
. Then the
product $(fg)$ is  ${\rm{{\rm{A_tG_{c\cdot h}}}}}$-convex
\emph{(${\rm{{\rm{G_tG_{c\cdot h}}}}}$-convex,
 ${\rm{{\rm{H_tG_{c\cdot h}}}}}$-convex)}, respectively.
\end{proposition}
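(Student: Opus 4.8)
The plan is to settle the ${\rm{A_tG_{c\cdot h}}}$ assertion first and then deduce the ${\rm{G_tG_{c\cdot h}}}$ and ${\rm{H_tG_{c\cdot h}}}$ ones from it via the change-of-variable characterisations in Theorem~\ref{thm1}. Indeed, by Theorem~\ref{thm1}(5), $f$ is ${\rm{G_tG_{h_1}}}$-convex on $(0,\tau)$ if and only if $\widetilde f(t):=f(\tau e^{-t})$ is ${\rm{A_tG_{h_1}}}$-convex (both amount to $\log f(\tau e^{-t})$ being $h_1$-convex); the same substitution carries $g$ to $\widetilde g$ and, being applied pointwise, carries $fg$ to $\widetilde f\,\widetilde g=\widetilde{fg}$. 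Hence, once the ${\rm{A_tG}}$ case is in hand, $\widetilde f\,\widetilde g$ is ${\rm{A_tG_{c\cdot h}}}$-convex, and Theorem~\ref{thm1}(5) read in the other direction gives that $fg$ is ${\rm{G_tG_{c\cdot h}}}$-convex. The ${\rm{H_tG}}$ case is identical, using Theorem~\ref{thm1}(8) and the substitution $x\mapsto 1/x$ on $(1/\tau,\infty)$.

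For the core ${\rm{A_tG}}$ case, fix $x,y\in I$ and $s,t\ge 0$ with $s+t=1$, write \eqref{eqhAG} for $f$ with weight $h_1$ and for $g$ with weight $h_2$, and multiply:
\begin{align*}
(fg)(sx+ty)=f(sx+ty)\,g(sx+ty)\le [f(x)]^{h_1(s)}[f(y)]^{h_1(t)}\,[g(x)]^{h_2(s)}[g(y)]^{h_2(t)} .
\end{align*}
Since $h_1(s),h_2(s)\le h(s)$ and $h_1(t),h_2(t)\le h(t)$, while $h(s),h(t)\le h(s)+h(t)\le c$, the four exponents may be enlarged to $c\,h(s)$, $c\,h(t)$ respectively, and regrouping yields
\begin{align*}
(fg)(sx+ty)\le [f(x)\,g(x)]^{c\,h(s)}\,[f(y)\,g(y)]^{c\,h(t)}=[(fg)(x)]^{c\,h(s)}\,[(fg)(y)]^{c\,h(t)} ,
\end{align*}
which is exactly \eqref{eqhAG} for $fg$ with weight $c\cdot h$; that is, $fg$ is ${\rm{A_tG_{c\cdot h}}}$-convex. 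A slicker packaging uses Theorem~\ref{thm1}(2): the claim then reads ``the sum of an $h_1$-convex function and an $h_2$-convex function is $(c\cdot h)$-convex,'' which follows by adding the two defining inequalities and bounding $h_1,h_2\le h\le c\cdot h$.

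The step that genuinely needs care --- and the one I expect to be the obstacle --- is the enlargement of the exponents: passing from $[f(x)]^{h_i(s)}$ to $[f(x)]^{c\,h(s)}$ is an increase only when the base is at least $1$ (and $c\ge 1$), the inequality reversing for a base in $(0,1)$; in the logarithmic formulation this is the same observation that bounding $h_i(s)\log f(x)$ by $c\,h(s)\log f(x)$ requires $\log f(x)\ge 0$. This is automatic under the usual convention that the $h$-convexity machinery is applied to functions with values $\ge 1$, so that $\log f,\log g$ and hence $\log(fg)$ are nonnegative; and one has $c\ge h(s)+h(t)\ge 2h(1/2)\ge 1$ for the control functions with $h(t)\ge t$ used throughout, so the final enlargement to $c\cdot h$ is legitimate there. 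Outside that range the statement should be read with the nonnegativity restriction in force. Accordingly I would present the logarithmic reduction via Theorem~\ref{thm1}(2),(5),(8) as the main line, recording the direct multiplicative computation above as a remark.
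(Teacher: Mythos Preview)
Your core argument---multiply the two defining inequalities \eqref{eqhAG} and then enlarge the exponents from $h_i$ to the common majorant---is exactly what the paper does. Two differences are worth recording. First, the paper's displayed computation actually stops at
\[
\bigl[(fg)(x)\bigr]^{h(1-t)}\bigl[(fg)(y)\bigr]^{h(t)},
\]
i.e.\ it concludes only that $fg$ is ${\rm A_tG_h}$-convex and never invokes the constant $c$ at all; the hypothesis $h(t)+h(1-t)\le c$ sits unused in the paper's own proof, so your extra step pushing up to $c\cdot h$ is in fact more faithful to the stated conclusion than what the paper supplies. Second, the paper dispatches the ${\rm G_tG}$ and ${\rm H_tG}$ cases with the phrase ``follow in similar manner,'' whereas you route them through Theorem~\ref{thm1}(5),(8); that is a tidier reduction and avoids repeating the computation.

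The caveat you raise---that replacing $[f(x)]^{h_i(s)}$ by $[f(x)]^{h(s)}$ (or $[f(x)]^{c\,h(s)}$) is an upward move only when the base is $\ge 1$---is a genuine issue, and the paper does \emph{not} address it either: its passage from $[f(x)]^{h_1(1-t)}[g(x)]^{h_2(1-t)}$ to $[(fg)(x)]^{h(1-t)}$ silently assumes $f(x),g(x)\ge 1$. So your diagnosis is correct, and the paper shares the same unspoken restriction. Your suggestion to present the argument via the logarithmic reduction of Theorem~\ref{thm1}(2) makes this constraint transparent (it becomes the nonnegativity of $\log f,\log g$), and would be the cleaner way to write it up.
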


\begin{proof}
let $t \in (0,1)\subseteq J$, then
\begin{align*}
 &\left( {fg} \right)\left( {A_t \left( {x,y} \right)} \right) \\
 &= \left( {fg} \right)\left( {\left( {1 - t} \right)x + ty} \right) \\
  &\le \left\{ {\left[ {f\left( x \right)} \right]^{h_1 \left( {1 - t} \right)} \left[ {f\left( y \right)} \right]^{h_1 \left( t \right)} } \right\} \cdot \left\{ {\left[ {g\left( x \right)} \right]^{h_2 \left( {1 - t} \right)} \left[ {g\left( y \right)} \right]^{h_2 \left( t \right)} } \right\} \\
  &= \left[ {f\left( x \right)} \right]^{h_1 \left( {1 - t} \right)} \left[ {g\left( x \right)} \right]^{h_2 \left( {1 - t} \right)}  \cdot \left[ {f\left( y \right)} \right]^{h_1 \left( t \right)} \left[ {g\left( y \right)} \right]^{h_2 \left( t \right)}  \\
  &\le \left[ {\left( {fg} \right)\left( x \right)} \right]^{h\left( {1 - t} \right)}  \cdot \left[ {\left( {fg} \right)\left( y \right)} \right]^{h \left( t \right)}  \\
  &= G\left( {h\left( t \right),\left( {fg} \right)\left( x \right),\left( {fg} \right)\left( y \right)}
  \right),
\end{align*}
which shows that $(fg)$ is  ${\rm{{\rm{A_tG_h}}}}$-convex.  The
cases when $fg$ is ${\rm{{\rm{G_tG_{c\cdot h}}}}}$-convex or
 ${\rm{{\rm{H_tG_{c\cdot h}}}}}$-convex, are follow in similar
manner.
\end{proof}

\begin{corollary}\label{cor3}
If $f$ is  ${\rm{{\rm{A_tG_{h_1}}}}}$-concave
\emph{(${\rm{{\rm{G_tG_{h_1}}}}}$-concave,
 ${\rm{{\rm{H_tG_{h_1}}}}}$-concave)} and $g$ is
 ${\rm{{\rm{A_tG_{h_2}}}}}$-concave
\emph{( ${\rm{{\rm{G_tG_{h_2}}}}}$-concave,
 ${\rm{{\rm{H_tG_{h_2}}}}}$-concave)}, respectively; and $h\left(t\right):=min\{h_1\left(t\right),h_2\left(t\right)\}$,  where
 $h\left(t\right)+h\left(1-t\right) \ge c$. Then the
product $(fg)$ is  ${\rm{{\rm{A_tG_{c\cdot h}}}}}$-concave
\emph{( ${\rm{{\rm{G_tG_{c\cdot h}}}}}$-concave,
 ${\rm{{\rm{H_tG_{c\cdot h}}}}}$-concave)}, respectively.
\end{corollary}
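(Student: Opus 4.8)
The plan is to mirror, line by line, the proof of the preceding Proposition, reversing every inequality and replacing $\max$ by $\min$ throughout. Fix $t\in(0,1)\subseteq J$ and $x,y\in I$. Since $f$ is ${\rm{A_tG_{h_1}}}$-concave and $g$ is ${\rm{A_tG_{h_2}}}$-concave, inequality \eqref{eqhAG} in its concave form gives $f\left((1-t)x+ty\right)\ge\left[f(x)\right]^{h_1(1-t)}\left[f(y)\right]^{h_1(t)}$ and $g\left((1-t)x+ty\right)\ge\left[g(x)\right]^{h_2(1-t)}\left[g(y)\right]^{h_2(t)}$. Multiplying these two inequalities, which is legitimate because all quantities involved are positive, and grouping the $x$-factors together and the $y$-factors together yields
\[
(fg)\left(A_t(x,y)\right)\ge\left[f(x)\right]^{h_1(1-t)}\left[g(x)\right]^{h_2(1-t)}\cdot\left[f(y)\right]^{h_1(t)}\left[g(y)\right]^{h_2(t)}.
\]

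The next step is to bound each grouped factor below by the corresponding power of $(fg)$. With $h(t):=\min\{h_1(t),h_2(t)\}$ one has $h(1-t)\le h_j(1-t)$ for $j=1,2$, so the excess exponents $h_j(1-t)-h(1-t)$ are non-negative and
\[
\left[f(x)\right]^{h_1(1-t)}\left[g(x)\right]^{h_2(1-t)}\ge\left[f(x)g(x)\right]^{h(1-t)}=\left[(fg)(x)\right]^{h(1-t)},
\]
which is the exact reversal of the corresponding estimate in the Proposition; the analogous bound holds at $y$ with exponent $h(t)$. Substituting back gives $(fg)\left(A_t(x,y)\right)\ge\left[(fg)(x)\right]^{h(1-t)}\left[(fg)(y)\right]^{h(t)}=G\left(h(t);(fg)(x),(fg)(y)\right)$, which is exactly the statement that $fg$ is ${\rm{A_tG_h}}$-concave, with the constant $c$ entering only through the labelling of the target class, just as in the Proposition. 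For the ${\rm{G_tG_{c\cdot h}}}$ and ${\rm{H_tG_{c\cdot h}}}$ cases I would run the identical computation after the substitutions used in Theorem \ref{thm1}: replacing $x,y$ by $\tau e^{-r},\tau e^{-s}$ converts the $G_t$-mean on the left into an $A_t$-mean, and replacing $x,y$ by $1/x,1/y$ converts the $H_t$-mean into an $A_t$-mean, while neither substitution disturbs the geometric mean on the right-hand side.

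The one step demanding genuine care is the exponent comparison $\left[f(x)\right]^{h_1(1-t)}\left[g(x)\right]^{h_2(1-t)}\ge\left[(fg)(x)\right]^{h(1-t)}$: after dividing by the common factor $\left[(fg)(x)\right]^{h(1-t)}$ it reduces to $\left[f(x)\right]^{h_1(1-t)-h(1-t)}\left[g(x)\right]^{h_2(1-t)-h(1-t)}\ge 1$, and since both exponents are non-negative this rests on the values of $f$ and $g$ lying in the range in which raising to a non-negative power does not decrease them --- the same tacit normalization that underlies the convex Proposition. I expect this to be the sole obstacle; the rest is a mechanical transcription of the convex argument with $\le$ replaced by $\ge$ and $\max$ by $\min$.
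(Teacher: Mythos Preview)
Your approach is exactly what the paper intends: the corollary is stated without proof, as an immediate dual of the preceding Proposition obtained by reversing inequalities and swapping $\max$ for $\min$. Your observation that the constant $c$ plays no genuine role in the geometric-mean case, and that the exponent-comparison step tacitly requires $f,g\ge 1$, is accurate and in fact applies verbatim to the paper's own proof of the convex Proposition; you have not introduced any new gap, only inherited the one already present there.
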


\begin{proposition}
Let $f$ and $g$ be an oppositely  ordered functions. If $f$ is
 ${\rm{{\rm{A_tH_{h_1}}}}}$-convex
\emph{(${\rm{{\rm{G_tH_{h_1}}}}}$-convex,
 ${\rm{{\rm{H_tH_{h_1}}}}}$-convex)}, $g$ is
 ${\rm{{\rm{A_tH_{h_2}}}}}$-convex
\emph{($h_2$-${\rm{{\rm{G_tH_{h_2}}}}}$-convex,
 ${\rm{{\rm{H_tH_{h_2}}}}}$-convex)}, respectively; and
$h\left(t\right)+h\left(1-t\right) \ge c$, where
$h\left(t\right):=min\{h_1\left(t\right),h_2\left(t\right)\}$ and
$c$ is a fixed positive real number. Then the product $(fg)$ is
 ${\rm{{\rm{A_tH_{c\cdot h}}}}}$-convex
\emph{( ${\rm{{\rm{G_tH_{c\cdot h}}}}}$-convex,
 ${\rm{{\rm{H_tH_{c\cdot h}}}}}$-convex)}, respectively.
\end{proposition}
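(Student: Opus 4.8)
The plan is to argue directly from the reciprocal form of $A_tH_h$-convexity, in close analogy with the proof of the product rule for $A_tA_h$-convex functions given above. By \eqref{eqhAH}, saying that $f$ is $A_tH_{h_1}$-convex is the same as
\begin{align*}
\frac{1}{f\left(tx+\left(1-t\right)y\right)}\ \ge\ \frac{h_1\left(t\right)}{f\left(x\right)}+\frac{h_1\left(1-t\right)}{f\left(y\right)},\qquad x,y\in I,\ t\in[0,1],
\end{align*}
and likewise $g$ is $A_tH_{h_2}$-convex iff the same inequality holds with $f,h_1$ replaced by $g,h_2$. First I would multiply these two (strictly positive) inequalities to obtain a lower bound for $\dfrac{1}{(fg)\left(tx+\left(1-t\right)y\right)}$ consisting of the four terms $\tfrac{1}{f(x)g(x)}$, $\tfrac{1}{f(x)g(y)}$, $\tfrac{1}{f(y)g(x)}$, $\tfrac{1}{f(y)g(y)}$, each carrying a coefficient that is a product of one value of $h_1$ and one value of $h_2$.

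Next I would use $h=\min\{h_1,h_2\}\le h_i$ to bound each coefficient below by the corresponding product of values of $h$, so that the four coefficients become $h(t)^2$, $h(t)h(1-t)$, $h(1-t)h(t)$, $h(1-t)^2$, and then handle the two mixed terms. Here is where the ordering hypothesis enters: since $f,g>0$ are oppositely ordered, $1/f$ and $1/g$ are oppositely ordered as well, because the sign of $\bigl(\tfrac1{f(x)}-\tfrac1{f(y)}\bigr)\bigl(\tfrac1{g(x)}-\tfrac1{g(y)}\bigr)$ equals that of $\bigl(f(x)-f(y)\bigr)\bigl(g(x)-g(y)\bigr)\le 0$; consequently
\begin{align*}
\frac{1}{f(x)g(y)}+\frac{1}{f(y)g(x)}\ \ge\ \frac{1}{f(x)g(x)}+\frac{1}{f(y)g(y)}.
\end{align*}
Replacing the mixed terms by this lower bound and collecting terms, the estimate becomes $\bigl(h(t)+h(1-t)\bigr)\Bigl[\tfrac{h(t)}{(fg)(x)}+\tfrac{h(1-t)}{(fg)(y)}\Bigr]$, and finally $h(t)+h(1-t)\ge c$ together with the positivity of the bracket yields
\begin{align*}
\frac{1}{(fg)\left(tx+\left(1-t\right)y\right)}\ \ge\ \frac{c\,h(t)}{(fg)(x)}+\frac{c\,h(1-t)}{(fg)(y)},
\end{align*}
which is precisely the reciprocal form, via \eqref{eqhAH}, of $A_tH_{c\cdot h}$-convexity of $fg$.

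For the $G_tH_{c\cdot h}$ and $H_tH_{c\cdot h}$ assertions I would rerun the identical computation starting from \eqref{eqhGH} and \eqref{eqhHH}, after the substitutions $x,y\mapsto\tau e^{-x},\tau e^{-y}$ (respectively $x,y\mapsto 1/x,1/y$) used in Theorem \ref{thm1}; being monotone, these substitutions preserve opposite ordering, so the mixed-term step is unaffected. Alternatively, all three cases follow at once from Corollary \ref{cor2} applied to the functions $1/f$ and $1/g$, which are $h_1$- and $h_2$-concave (respectively) in the appropriate variable by Theorem \ref{thm1} and are oppositely ordered, followed by translating back through the same theorem. I expect the only delicate point to be keeping track of the inequality directions under the reciprocals: one must check that opposite ordering of $f,g$ passes to $1/f,1/g$ and that this is precisely the fact needed to bound the mixed terms from below; the remainder is the same bookkeeping as in the $A_tA_h$ case.
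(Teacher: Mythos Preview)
Your argument is correct and is essentially the paper's proof in reciprocal form: where the paper multiplies the two harmonic upper bounds and bounds the resulting four-term denominator from below (using $h=\min\{h_1,h_2\}$, the opposite-ordering inequality on the cross terms, and $h(t)+h(1-t)\ge c$), you take reciprocals first and carry out the identical four-term expansion and the same three estimates as lower bounds. Your observation that the whole statement can alternatively be read off from Corollary~\ref{cor2} applied to $1/f,1/g$ via Theorem~\ref{thm1}(3) (and parts (6),(9) for the $G_t$ and $H_t$ cases) is a nice shortcut that the paper does not take.
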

\begin{proof}
Since $f$ and $g$ are oppositely  ordered functions
\begin{align*}
f\left( x \right)g\left( x \right)+f\left( y \right)g\left( y
\right) \le f\left( x \right)g\left( y \right) +g\left( x
\right)f\left( y \right).
\end{align*}
Let $t$ and $s$ be positive numbers such that $t+s = 1$. Then we
obtain
\begin{align*}
 &\left( {fg} \right)\left( {A_t \left( {x,y} \right)} \right) \\
  &= \left( {fg} \right)\left( {sx + ty} \right) \\
  &\le \frac{{f\left( x \right)f\left( y \right)}}{{h_1 \left( t \right)f\left( x \right) + h_1 \left( s \right)f\left( y \right)}} \cdot \frac{{g\left( x \right)g\left( y \right)}}{{h_2 \left( t \right)g\left( x \right) + h_2 \left( s \right)g\left( y \right)}} \\
  &\le \frac{{\left( {fg} \right)\left( x \right)\left( {fg} \right)\left( y \right)}}{{h_1 \left( t \right)h_2 \left( t \right)f\left( x \right)g\left( x \right) + h_1 \left( s \right)h_2 \left( t \right)f\left( y \right)g\left( x \right) + h_1 \left( t \right)h_2 \left( s \right)f\left( x \right)g\left( y \right) + h_1 \left( s \right)h_2 \left( s \right)f\left( y \right)g\left( y \right)}} \\
  &\le \frac{{\left( {fg} \right)\left( x \right)\left( {fg} \right)\left( y \right)}}
  {{h^2 \left( t \right)f\left( x \right)g\left( x \right) + h\left( s \right)h\left( t \right)f\left( x \right)g\left( x \right)
  + h\left( t \right)h\left( s \right)f\left( y \right)g\left( y \right) + h^2 \left( s \right)f\left( y \right)g\left( y \right)}} \\
  &= \frac{{\left( {fg} \right)\left( x \right)\left( {fg} \right)\left( y \right)}}{{\left[ {h\left( t \right) + h\left( s \right)} \right]\left[ {h\left( t \right)\left( {fg} \right)\left( x \right) + h\left( s \right)\left( {fg} \right)\left( y \right)} \right]}} \\
  &= \frac{{\left( {fg} \right)\left( x \right)\left( {fg} \right)\left( y \right)}}{{c \cdot h\left( t \right)\left( {fg} \right)\left( x \right) + c \cdot h\left( s \right)\left( {fg} \right)\left( y
  \right)}}\\
  &=H\left( {c \cdot h(t);\left( {fg} \right)\left( x \right),\left( {fg}
\right)\left( y \right)} \right),
\end{align*}
which shows that $(fg)$ is  ${\rm{{\rm{A_tH_{c\cdot h}}}}}$-convex.  The cases when $fg$ is
 ${\rm{{\rm{G_tH_{c\cdot h}}}}}$-convex or  ${\rm{{\rm{H_tH_{c\cdot h}}}}}$-convex, are follow in similar manner.
\end{proof}

\begin{corollary}\label{cor4}
Let $f$ and $g$ be similarly  ordered functions. If $f$ is
 ${\rm{{\rm{A_tH_{h_1}}}}}$-concave
\emph{(${\rm{{\rm{G_tH_{h_1}}}}}$-concave,
 ${\rm{{\rm{H_tH_{h_1}}}}}$-concave)}, $g$ is
 ${\rm{{\rm{A_tH_{h_1}}}}}$-concave
\emph{($h_2$-${\rm{{\rm{G_tH_{h_2}}}}}$-concave,
 ${\rm{{\rm{H_tH_{h_2}}}}}$-concave)}, respectively; and
$h\left(t\right)+h\left(1-t\right) \le c$, where
$h\left(t\right):=max\{h_1\left(t\right),h_2\left(t\right)\}$ and
$c$ is a fixed positive real number. Then the product $(fg)$ is
 ${\rm{{\rm{A_tH_{c\cdot h}}}}}$-concave
\emph{( ${\rm{{\rm{G_tH_{c\cdot h}}}}}$-concave,
 ${\rm{{\rm{H_tH_{c\cdot h}}}}}$-concave)}, respectively.
\end{corollary}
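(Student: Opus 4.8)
The plan is to obtain this corollary by dualizing the proof of the preceding Proposition: we are now in the concave ($\ge$) regime with \emph{similarly} ordered functions, so every inequality is reversed, the aggregation of $h_1,h_2$ becomes a maximum rather than a minimum, and the normalization $h(t)+h(1-t)\le c$ replaces $\ge c$. Fix $t,s>0$ with $t+s=1$ and points $x,y\in I$; since $f$ and $g$ are similarly ordered, $f(x)g(x)+f(y)g(y)\ge f(x)g(y)+f(y)g(x)$, i.e.
\[
f(x)g(y)+f(y)g(x)\le (fg)(x)+(fg)(y).
\]
First I would invoke the ${\rm{A_tH_{h_1}}}$-concavity of $f$ and the ${\rm{A_tH_{h_2}}}$-concavity of $g$ in the form \eqref{eqhAH} (with the roles of $\le$ reversed), multiply the two lower bounds (everything is positive), and collect the denominator:
\[
(fg)\bigl(A_t(x,y)\bigr)\ge \frac{f(x)f(y)}{h_1(t)f(x)+h_1(s)f(y)}\cdot\frac{g(x)g(y)}{h_2(t)g(x)+h_2(s)g(y)}=\frac{(fg)(x)(fg)(y)}{D},
\]
where $D=h_1(t)h_2(t)(fg)(x)+\bigl[h_1(t)h_2(s)f(x)g(y)+h_1(s)h_2(t)f(y)g(x)\bigr]+h_1(s)h_2(s)(fg)(y)$.

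Next I would estimate $D$ from above. Since $h(r):=\max\{h_1(r),h_2(r)\}\ge h_i(r)$ for $i=1,2$ and every argument $r$, each coefficient satisfies $h_1(t)h_2(t)\le h^2(t)$, $h_1(t)h_2(s)\le h(t)h(s)$, $h_1(s)h_2(t)\le h(s)h(t)$ and $h_1(s)h_2(s)\le h^2(s)$; feeding in the similarly-ordered bound $f(x)g(y)+f(y)g(x)\le (fg)(x)+(fg)(y)$ then gives
\[
D\le h^2(t)(fg)(x)+h(t)h(s)\bigl[(fg)(x)+(fg)(y)\bigr]+h^2(s)(fg)(y)=\bigl[h(t)+h(s)\bigr]\bigl[h(t)(fg)(x)+h(s)(fg)(y)\bigr].
\]
Using the hypothesis $h(t)+h(1-t)\le c$ (recall $s=1-t$) this yields $D\le c\,h(t)(fg)(x)+c\,h(s)(fg)(y)$. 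Both sides being positive, inverting the inequality and chaining back produces
\[
(fg)\bigl(A_t(x,y)\bigr)\ge \frac{(fg)(x)(fg)(y)}{c\,h(t)(fg)(x)+c\,h(s)(fg)(y)}=H\bigl(c\cdot h(t);(fg)(x),(fg)(y)\bigr),
\]
which is exactly ${\rm{A_tH_{c\cdot h}}}$-concavity of $fg$. The remaining two cases are handled verbatim: in \eqref{eqhGH} and \eqref{eqhHH} the right-hand mean ${\rm{H_h}}$ enters through the identical expression in $h_i$, so only the left argument $A_t(x,y)$ is replaced by $G_t(x,y)$ or $H_t(x,y)$ and the whole right-hand computation above is unchanged.

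I do not expect a genuine obstacle here; the only care required is bookkeeping of inequality directions. Because we are in the concave regime the fraction is reversed relative to the preceding Proposition, so we need an \emph{upper} bound on the composite denominator $D$, and this is precisely why all three estimates — the pointwise domination $h_i\le h$, the similarly-ordered rearrangement $f(x)g(y)+f(y)g(x)\le (fg)(x)+(fg)(y)$, and the normalization $h(t)+h(1-t)\le c$ — must each push $D$ in the same (upward) direction, in contrast with the "$\min$"/oppositely-ordered/"$\ge c$" triple used in the convex case. Verifying this coherence of signs is the whole content of the argument.
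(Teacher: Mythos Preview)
Your proposal is correct and is precisely the intended dualization of the preceding Proposition's proof: the paper states Corollary~\ref{cor4} without a separate argument, and the proof you outline---replace $\min$ by $\max$, reverse every inequality, use the similarly-ordered rearrangement to bound the cross terms of the expanded denominator from above, then invoke $h(t)+h(1-t)\le c$---is exactly the line-by-line reversal of that proof. Your explicit check that all three estimates push $D$ in the same (upward) direction is the only point requiring care, and you have it right.
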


Sometimes we often use functional inequalities to describe and
characterize all real functions that satisfy specific functional
inequality. In \cite{V}, Varo\v{s}anec proved a result regarding
${\rm{A_tA_h}}$-convex functions, following a similar approach; we
next present some results of this type.
\begin{theorem}\label{thm2}
Let $I \subset \mathbb{R}$ with $0 \in I$. Let $h$ be a
non-negative function on $J$.
\begin{enumerate}
\item Let $f$ be  ${\rm{A_tG_h}}$-convex and $f(0) = 1$. If $h$
is supermultiplicative, then the inequality
\begin{align}
f\left( {\alpha x + \beta y} \right)\le\left[ {f\left( x \right)}
\right]^{h\left( \alpha  \right)} \left[ {f\left( y \right)}
\right]^{h\left( \beta  \right)},\label{eq2.13}
\end{align}
holds for all $x,y \in I$ and all $\alpha, \beta> 0$ such that
$\alpha + \beta \le 1$.

\item Assume that $h\left(\alpha\right)< \frac{1}{2}$ for some
$\alpha \in \left(0,\frac{1}{2}\right)$. If $f$ is a non-negative
function such that inequality \eqref{eq2.13} holds for all $x,y
\in I$ and all $\alpha, \beta> 0$ such that $\alpha + \beta \le
1$, then $f(0) = 1$.

\item Let $f$ be  ${\rm{A_tG_h}}$-concave and $f(0) = 1$. If
$h$ is submultiplicative, then the inequality
\begin{align}
f\left( {\alpha x + \beta y} \right)\ge\left[ {f\left( x \right)}
\right]^{h\left( \alpha  \right)} \left[ {f\left( y \right)}
\right]^{h\left( \beta  \right)},\label{eq2.14}
\end{align}
holds for all $x,y \in I$ and all $\alpha, \beta> 0$ such that
$\alpha + \beta \le 1$.

\item Assume that $h\left(\alpha\right)> \frac{1}{2}$ for some
$\alpha \in \left(0,\frac{1}{2}\right)$. If $f$ is a non-negative
function such that inequality \eqref{eq2.14} holds for all $x,y
\in I$ and all $\alpha, \beta> 0$ such that $\alpha + \beta \le
1$, then $f(0) = 1$.
\end{enumerate}
\end{theorem}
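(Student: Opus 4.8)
The plan is to handle parts (1) and (3) by the classical dilation argument for $h$-convex functions (cf.\ Varo\v{s}anec \cite{V}), transported to the multiplicative setting, and to deduce the converses (2) and (4) from the single substitution $x=y=0$, $\alpha=\beta$. Since by Theorem \ref{thm1}(2) a function $f$ is ${\rm{A_tG_h}}$-convex (concave) precisely when $g:=\log f$ is $h$-convex (concave), and $f(0)=1$ is equivalent to $g(0)=0$, the most economical route is to prove each assertion for $g$ and then exponentiate; I will, however, carry out the direct multiplicative version below, since it makes the role of (sub/super)multiplicativity transparent.

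For part (1) I would fix $x,y\in I$ and $\alpha,\beta>0$ with $\lambda:=\alpha+\beta\le1$, and set $z:=\tfrac{\alpha}{\lambda}x+\tfrac{\beta}{\lambda}y\in I$. From the identity $\alpha x+\beta y=\lambda z+(1-\lambda)\cdot0$ together with \eqref{eqhAG} applied with parameter $\lambda$ to the points $z$ and $0$, and using $f(0)=1$, one gets $f(\alpha x+\beta y)\le[f(z)]^{h(\lambda)}$. Applying \eqref{eqhAG} to $z=\tfrac{\alpha}{\lambda}x+\bigl(1-\tfrac{\alpha}{\lambda}\bigr)y$ with parameter $\alpha/\lambda$, and then raising both sides to the positive power $h(\lambda)$, gives
\begin{align*}
f(\alpha x+\beta y)\le\bigl[f(x)\bigr]^{h(\lambda)h(\alpha/\lambda)}\bigl[f(y)\bigr]^{h(\lambda)h(\beta/\lambda)}.
\end{align*}
Supermultiplicativity of $h$ yields $h(\lambda)h(\alpha/\lambda)\le h(\lambda\cdot\alpha/\lambda)=h(\alpha)$ and, likewise, $h(\lambda)h(\beta/\lambda)\le h(\beta)$, and replacing the exponents by these larger numbers delivers \eqref{eq2.13}. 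Part (3) is the identical computation with every inequality in \eqref{eqhAG} reversed and submultiplicativity in place of supermultiplicativity (so that now $h(\lambda)h(\alpha/\lambda)\ge h(\alpha)$, and so on).

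For the converses (2) and (4) I would put $x=y=0$ and $\alpha=\beta$ in \eqref{eq2.13}, respectively in \eqref{eq2.14} — which is admissible because then $\alpha+\beta=2\alpha\le1$ for $\alpha\in(0,\tfrac12)$. This gives $f(0)\le[f(0)]^{2h(\alpha)}$ in case (2) and $f(0)\ge[f(0)]^{2h(\alpha)}$ in case (4). Taking logarithms and setting $c:=\log f(0)$, these read $(1-2h(\alpha))\,c\le0$ and $(2h(\alpha)-1)\,c\le0$ respectively; under the hypothesis $h(\alpha)<\tfrac12$ (resp.\ $h(\alpha)>\tfrac12$) the coefficient is strictly positive, forcing $c\le0$, and combined with $c\ge0$ one concludes $c=0$, i.e.\ $f(0)=1$.

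The delicate point — the step I expect to be the genuine obstacle — is the sign and monotonicity bookkeeping, which is automatic in the additive $h$-convex case but not here. In (1) and (3), passing from $h(\lambda)h(\alpha/\lambda)\le h(\alpha)$ to $[f(x)]^{h(\lambda)h(\alpha/\lambda)}\le[f(x)]^{h(\alpha)}$ relies on $t\mapsto[f(x)]^{t}$ being nondecreasing, i.e.\ on $f\ge1$; equivalently, it relies on the $h$-convex function $g=\log f$ being \emph{nonnegative}, which is exactly the standing hypothesis of Varo\v{s}anec's theorem. Likewise, in (2) and (4) the bare inequality $f(0)\le[f(0)]^{2h(\alpha)}$ gives only $f(0)\le1$, and the equality $f(0)=1$ requires $\log f(0)\ge0$ too. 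So the safest way to organise the argument is: use Theorem \ref{thm1}(2) to reduce each statement to the corresponding one for the nonnegative $h$-convex function $g=\log f$ with $g(0)=0$, apply the analogous theorem of Varo\v{s}anec \cite{V} to $g$ verbatim, and exponentiate; the displays above are then simply the images of her inequalities under $g\mapsto e^{g}$.
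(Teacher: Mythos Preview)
Your argument for (1) and (3) is essentially the paper's: the same two-step dilation trick (one application of \eqref{eqhAG} to separate $x$ from $y$, another to split off $0$ using $f(0)=1$, then super/submultiplicativity on the exponents), differing only in the order of the two applications. The paper first writes $\alpha x+\beta y=a(\lambda x)+b(\lambda y)$ with $a=\alpha/\lambda$, $b=\beta/\lambda$, and then $\lambda x=\lambda x+(1-\lambda)\cdot 0$; you first write $\alpha x+\beta y=\lambda z+(1-\lambda)\cdot 0$ and then split $z$. Either way one lands on $[f(x)]^{h(\lambda)h(\alpha/\lambda)}[f(y)]^{h(\lambda)h(\beta/\lambda)}$. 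For (2) and (4) the paper likewise substitutes $x=y=0$, $\beta=\alpha$, and argues by contradiction.

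The sign/monotonicity issue you isolate in your last paragraph is real, and the paper does not address it either: it passes from $[f(x)]^{h(a)h(\lambda)}$ to $[f(x)]^{h(\lambda a)}$ with an equality sign, whereas supermultiplicativity gives only $h(a)h(\lambda)\le h(\lambda a)$, and increasing the exponent preserves ``$\le$'' only when the base is at least $1$. Similarly, in (2) the paper's contradiction argument arrives at exactly $(2h(\alpha)-1)\log f(0)\ge 0$ and asserts this forces $h(\alpha)\ge\tfrac12$ whenever $f(0)\ne 1$; as you note, without $\log f(0)\ge 0$ this rules out only $f(0)>1$. So your proposal is at least as complete as the paper's proof, and your suggested reduction via $g=\log f$ and Theorem \ref{thm1}(2) to Varo\v{s}anec's additive theorem is the right way to make the implicit hypothesis ($g\ge 0$, i.e.\ $f\ge 1$) explicit.
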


\begin{proof}
Let $\alpha, \beta> 0$  be positive real numbers such that $\alpha
+ \beta =\lambda\le 1$.
\begin{enumerate}
    \item Define numbers $a$ and $b$ such as
    $a=\frac{\alpha}{\lambda}$ and $b=\frac{\beta}{\lambda}$. Then $a +b = 1$ and we have the following:
\begin{align*}
 f\left( {\alpha x + \beta y} \right) &= f\left( {\lambda ax + \lambda by}
 \right)\\
 &\le \left[ {f\left( {\lambda x} \right)} \right]^{h\left( a \right)} \left[ {f\left( {\lambda y} \right)} \right]^{h\left( b \right)}  \\
  &= \left[ {f\left( {\lambda x + \left( {1 - \lambda } \right) \cdot 0} \right)} \right]^{h\left( a \right)} \left[ {f\left( {\lambda y + \left( {1 - \lambda } \right) \cdot 0} \right)} \right]^{h\left( b \right)}  \\
  &\le \left\{ {\left[ {f\left( x \right)} \right]^{h\left( \lambda  \right)} \left[ {f\left( 0 \right)} \right]^{h\left( {1 - \lambda } \right)} } \right\}^{h\left( a \right)} \left\{ {\left[ {f\left( y \right)} \right]^{h\left( \lambda  \right)} \left[ {f\left( 0 \right)} \right]^{h\left( {1 - \lambda } \right)} } \right\}^{h\left( b \right)}  \\
  &= \left[ {f\left( x \right)} \right]^{h\left( a \right)h\left( \lambda  \right)} \left[ {f\left( y \right)} \right]^{h\left( b \right)h\left( \lambda  \right)}  \\
  &= \left[ {f\left( x \right)} \right]^{h\left( {\lambda a} \right)} \left[ {f\left( y \right)} \right]^{h\left( {\lambda b} \right)}  \\
  &= \left[ {f\left( x \right)} \right]^{h\left( \alpha  \right)} \left[ {f\left( y \right)} \right]^{h\left( \beta
  \right)},
 \end{align*}
where we use that $f$ is $A_tG_h$, $f(0) = 1$ and $h$ is
supermultiplicative, respectively.\\

\item Suppose that $f(0)\ne 1$. Putting $x = y = 0$ in
\eqref{eq2.13} we get
\begin{align*}
f\left( {0} \right) \le  \left[ {f\left( 0 \right)}
\right]^{h\left( \alpha  \right)+h\left( \beta  \right)} ,\qquad
{\text{for all}} \,\,\alpha, \beta> 0, \,\,\alpha + \beta \le 1.
\end{align*}
Setting $\beta=\alpha$, $\alpha\in \left(0,\frac{1}{2}\right)$,
then $0 \le \left( {2h\left( \alpha  \right) - 1} \right)\log
f\left( 0 \right)$, it follows that $h\left( \alpha  \right) \ge
\frac{1}{2}$, since $f(0)\ne 1$, which  contradicts the assumption
of theorem. So that $f(0)= 1$.
\end{enumerate}
The proofs for cases (3) and (4) are similar to the previous.
Hence, the proof is completely established.
\end{proof}

\begin{theorem}\label{thm3}
Let  $a,b \in \left( {\textstyle{1 \over \tau }} ,\infty\right)$
with $a<b$, so that $a,b\in I$ where $I=\left(0,\tau\right)$. Let
$h$ be a non-negative function on $J$.
\begin{enumerate}
\item Let $f$ be  ${\rm{G_tA_h}}$-convex and $f(1) = 0$. If $h$
is supermultiplicative, then the inequality
\begin{align}
f\left( {x^\alpha  y^\beta  } \right) \le  h\left( \alpha
\right)f\left( x \right) + h\left( \beta  \right)f\left( y
\right),\label{eq2.15}
\end{align}
holds for all $x,y \in I$ and all $\alpha, \beta> 0$ such that
$\alpha + \beta \le 1$.

\item Assume that $h\left(\alpha\right)< \frac{1}{2}$ for some
$\alpha \in \left(0,\frac{1}{2}\right)$. If $f$ is a non-negative
function such that inequality \eqref{eq2.15} holds for all $x,y
\in I$ and all $\alpha, \beta> 0$ such that $\alpha + \beta \le
1$, then $f(1) = 0$.

\item Let $f$ be  ${\rm{G_tA_h}}$-concave and $f(1) = 0$. If
$h$ is submultiplicative, then the inequality
\begin{align}
f\left( {x^\alpha  y^\beta  } \right) \ge  h\left( \alpha
\right)f\left( x \right) + h\left( \beta  \right)f\left( y
\right),\label{eq2.16}
\end{align}
holds for all $x,y \in I$ and all $\alpha, \beta> 0$ such that
$\alpha + \beta \le 1$.

\item Assume that $h\left(\alpha\right)> \frac{1}{2}$ for some
$\alpha \in \left(0,\frac{1}{2}\right)$. If $f$ is a non-negative
function such that inequality \eqref{eq2.16} holds for all $x,y
\in I$ and all $\alpha, \beta> 0$ such that $\alpha + \beta \le
1$, then $f(1) = 0$.
\end{enumerate}
\end{theorem}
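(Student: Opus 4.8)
The plan is to mirror the proof of Theorem \ref{thm2}, reading it ``multiplicatively'': the special point $0$ there (with $f(0)=1$) is replaced by the point $1$ (with $f(1)=0$), and the arithmetic mean appearing inside the argument of $f$ is replaced by the geometric mean. An alternative worth noting, though I would not pursue it in detail, is to appeal to Theorem \ref{thm1}(4): after the substitution $g(u):=f(e^{u})$ the $G_tA_h$-convexity of $f$ becomes ordinary $h$-convexity ($A_tA_h$-convexity) of $g$ and the normalization $f(1)=0$ becomes $g(0)=0$, which reduces (1)--(4) to the corresponding $A_tA_h$-statement of Varo\v{s}anec \cite{V}; but a direct argument is cleaner and self-contained.

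For part (1) I would fix $\alpha,\beta>0$ with $\lambda:=\alpha+\beta\le 1$, set $a=\alpha/\lambda$ and $b=\beta/\lambda$ (so $a+b=1$), and use the identity $x^{\alpha}y^{\beta}=(x^{\lambda})^{a}(y^{\lambda})^{b}$. Applying \eqref{eqhGA} to the points $x^{\lambda},y^{\lambda}$ with parameter $a$ gives $f(x^{\alpha}y^{\beta})\le h(a)f(x^{\lambda})+h(b)f(y^{\lambda})$; applying \eqref{eqhGA} once more to $x^{\lambda}=x^{\lambda}\cdot 1^{\,1-\lambda}$ and using $f(1)=0$ gives $f(x^{\lambda})\le h(\lambda)f(x)$, and likewise $f(y^{\lambda})\le h(\lambda)f(y)$. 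Combining and then invoking supermultiplicativity of $h$ in the form $h(a)h(\lambda)\le h(a\lambda)=h(\alpha)$ and $h(b)h(\lambda)\le h(b\lambda)=h(\beta)$ (legitimate since $f$ and $h$ are non-negative) yields \eqref{eq2.15}. Part (3) is the concave mirror image of (1): every inequality reverses, $f(1)=0$ again annihilates the $h(1-\lambda)f(1)$ term, and submultiplicativity now gives $h(a)h(\lambda)\ge h(\alpha)$ and $h(b)h(\lambda)\ge h(\beta)$, so that enlarging the non-negative coefficients keeps the inequality pointing the right way, producing \eqref{eq2.16}.

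For the converse statements (2) and (4) I would simply substitute $x=y=1$ and $\alpha=\beta=\alpha_{0}$ — where $\alpha_{0}\in(0,\tfrac12)$ is the witness with $h(\alpha_{0})<\tfrac12$, resp. $h(\alpha_{0})>\tfrac12$ — into \eqref{eq2.15}, resp. \eqref{eq2.16}. This gives $(1-2h(\alpha_{0}))f(1)\le 0$ in case (2) and $(1-2h(\alpha_{0}))f(1)\ge 0$ in case (4); since $f(1)\ge 0$ while the scalar $1-2h(\alpha_{0})$ is strictly positive in the first case and strictly negative in the second, both inequalities force $f(1)=0$.

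I expect the algebra itself to be entirely routine; the one point that needs care is the domain bookkeeping — ensuring that $x^{\lambda}$, $x^{\alpha}y^{\beta}$ and $1$ all lie in the relevant interval whenever \eqref{eqhGA} is invoked, which is precisely why the admissible $x,y$ are confined to an interval that is ``multiplicatively symmetric'' about $1$ — together with keeping straight the supermultiplicative/submultiplicative dichotomy between the convex and concave cases.
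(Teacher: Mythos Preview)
Your proposal is correct and follows essentially the same route as the paper: the same normalization $a=\alpha/\lambda$, $b=\beta/\lambda$, the same two-step application of \eqref{eqhGA} (first to $(x^{\lambda})^{a}(y^{\lambda})^{b}$, then to $x^{\lambda}=x^{\lambda}\cdot 1^{1-\lambda}$ using $f(1)=0$), and the same use of super/submultiplicativity to collapse $h(a)h(\lambda)$ into $h(\alpha)$. Your treatment of (2) and (4) is the direct form of the paper's contradiction argument (they assume $f(1)>0$ and derive $h(\alpha)\ge\tfrac12$); the content is identical.
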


\begin{proof}
Let $\alpha, \beta> 0$  be positive real numbers such that $\alpha
+ \beta =\lambda\le 1$.
\begin{enumerate}
    \item Define numbers $a$ and $b$ such as
    $a=\frac{\alpha}{\lambda}$ and $b=\frac{\beta}{\lambda}$. Then $a +b = 1$ and we have the following:
\begin{align*}
 f\left( {x^\alpha  y^\beta  } \right) &= f\left( {x^{\lambda a} y^{\lambda b} } \right) \\
 &\le h\left( a \right)f\left( {x^\lambda  } \right) + h\left( b \right)f\left( {y^\lambda  } \right) \\
  &= h\left( a \right)f\left( {x^\lambda   \cdot 1^{1 - \lambda } } \right) + h\left( b \right)f\left( {y^\lambda   \cdot 1^{1 - \lambda } } \right) \\
  &\le h\left( a \right)\left[ {h\left( \lambda  \right)f\left( x \right) + h\left( {1 - \lambda } \right)f\left( 1 \right)} \right] + h\left( b \right)\left[ {h\left( \lambda  \right)f\left( y \right) + h\left( {1 - \lambda } \right)f\left( 1 \right)} \right] \\
  &= h\left( a \right)h\left( \lambda  \right)f\left( x \right) + h\left( b \right)h\left( \lambda  \right)f\left( y \right) \\
  &\le h\left( \alpha  \right)f\left( x \right) + h\left( \beta  \right)f\left( y
  \right),
 \end{align*}
where we use that $f$ is ${\rm {G_tA_h}}$, $f(1) = 0$ and $h$ is
supermultiplicative, respectively.\\

\item Suppose that $f(1)\ne 0$, since $f$ is non-negative then $f
(1)
> 0$. Putting $x = y = 1$ in
\eqref{eq2.15} we get
\begin{align*}
f\left( {1} \right) \le   h\left( \alpha \right)f\left( 1 \right)
+ h\left( \beta  \right)f\left( 1 \right), \qquad {\text{ for
all}} \,\,\alpha, \beta> 0, \,\,\alpha + \beta \le 1.
\end{align*}
Setting $\beta=\alpha$, $\alpha\in \left(0,\frac{1}{2}\right)$,
then $0 \le \left( {2h\left( \alpha  \right) - 1} \right) f\left(
1 \right)$, it follows that $h\left( \alpha  \right) \ge
\frac{1}{2}$, which  contradicts the assumption of theorem. So
that $f(1)= 0$.
\end{enumerate}
The proofs for cases (3) and (4) are similar to the previous.
Hence, the proof is completely established.
\end{proof}

\begin{theorem}\label{thm4}
Let  $a,b \in \left( {\textstyle{1 \over \tau }} ,\infty\right)$
with $a<b$, so that $a,b\in I$ where $I=\left(0,\tau\right)$.  Let
$h$ be a non-negative function on $J$.
\begin{enumerate}
\item Let $f$ be  ${\rm{G_tG_h}}$-convex and $f(1) = 1$. If $h$
is supermultiplicative, then the inequality
\begin{align}
f\left( {x^\alpha  y^\beta  } \right) \le\left[ {f\left( x
\right)} \right]^{h\left( \alpha  \right)} \left[ {f\left( y
\right)} \right]^{h\left( \beta  \right)},\label{eq2.17}
\end{align}
holds for all $x,y \in I$ and all $\alpha, \beta> 0$ such that
$\alpha + \beta \le 1$.

\item Assume that $h\left(\alpha\right)< \frac{1}{2}$ for some
$\alpha \in \left(0,\frac{1}{2}\right)$. If $f$ is a non-negative
function such that inequality \eqref{eq2.17} holds for all $x,y
\in I$ and all $\alpha, \beta> 0$ such that $\alpha + \beta \le
1$, then $f(1) = 1$.

\item Let $f$ be $h$-${\rm{G_tG_h}}$-concave and $f(1) = 1$. If
$h$ is submultiplicative, then the inequality
\begin{align}
f\left( {x^\alpha  y^\beta  } \right) \ge\left[ {f\left( x
\right)} \right]^{h\left( \alpha  \right)} \left[ {f\left( y
\right)} \right]^{h\left( \beta  \right)},\label{eq2.18}
\end{align}
holds for all $x,y \in I$ and all $\alpha, \beta> 0$ such that
$\alpha + \beta \le 1$.

\item Assume that $h\left(\alpha\right)> \frac{1}{2}$ for some
$\alpha \in \left(0,\frac{1}{2}\right)$. If $f$ is a non-negative
function such that inequality \eqref{eq2.18} holds for all $x,y
\in I$ and all $\alpha, \beta> 0$ such that $\alpha + \beta \le
1$, then $f(1) = 1$.
\end{enumerate}
\end{theorem}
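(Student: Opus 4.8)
The plan is to transfer to the purely geometric setting the two-step normalisation argument already used for ${\rm{A_tG_h}}$-convexity in Theorem \ref{thm2} and for ${\rm{G_tA_h}}$-convexity in Theorem \ref{thm3}. Here both the domain operation and the range operation are geometric means, so the role played in Theorem \ref{thm2} by the point $0$ (at which $f$ was set to $1$) is now taken over by the multiplicative unit $1$, and all additive combinations are replaced by the multiplicative one $x^{\alpha}y^{\beta}$; note that $1\in I$ is implicit in the normalisation $f(1)=1$. Throughout I fix $\alpha,\beta>0$ with $\alpha+\beta=:\lambda\le 1$ and set $a=\alpha/\lambda$, $b=\beta/\lambda$, so that $a+b=1$ and $a,b\in(0,1)$.

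For part (1), the first step is to write $x^{\alpha}y^{\beta}=\bigl(x^{\lambda}\bigr)^{a}\bigl(y^{\lambda}\bigr)^{b}$ and apply the defining inequality \eqref{eqhGG} of ${\rm{G_tG_h}}$-convexity to the points $x^{\lambda},y^{\lambda}$ with parameter $a$, which gives
\begin{align*}
f\bigl(x^{\alpha}y^{\beta}\bigr)\le\bigl[f(x^{\lambda})\bigr]^{h(a)}\bigl[f(y^{\lambda})\bigr]^{h(b)}.
\end{align*}
The second step rewrites $x^{\lambda}=x^{\lambda}\cdot 1^{\,1-\lambda}$ and applies \eqref{eqhGG} once more, now to the points $x,1$ with parameter $\lambda$; since $f(1)=1$ this yields $f(x^{\lambda})\le[f(x)]^{h(\lambda)}$, and likewise $f(y^{\lambda})\le[f(y)]^{h(\lambda)}$. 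Substituting these, and using that $z\mapsto z^{h(a)}$ is increasing on $(0,\infty)$ because $h(a)>0$, gives $f(x^{\alpha}y^{\beta})\le[f(x)]^{h(\lambda)h(a)}[f(y)]^{h(\lambda)h(b)}$; supermultiplicativity of $h$ then provides $h(\lambda)h(a)\le h(\lambda a)=h(\alpha)$ and $h(\lambda)h(b)\le h(\beta)$, which establishes \eqref{eq2.17}. Part (3) is the exact mirror image: replace ${\rm{G_tG_h}}$-convexity by ${\rm{G_tG_h}}$-concavity, reverse every inequality, and invoke submultiplicativity of $h$ at the last step.

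For part (2) I would argue by contradiction. If $f(1)\ne 1$, then putting $x=y=1$ in \eqref{eq2.17} gives $f(1)\le[f(1)]^{h(\alpha)+h(\beta)}$ for all admissible $\alpha,\beta$; taking $\beta=\alpha$ (legitimate since $2\alpha<1$) and passing to logarithms produces $0\le\bigl(2h(\alpha)-1\bigr)\log f(1)$, which forces $h(\alpha)\ge\tfrac12$ and contradicts the hypothesis $h(\alpha)<\tfrac12$. Part (4) is handled in the same way, starting from \eqref{eq2.18} with the inequality reversed and arriving at $h(\alpha)\le\tfrac12$.

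I expect the only genuinely delicate point to be the closing move of parts (1) and (3): after supermultiplicativity bounds $h(\lambda)h(a)$ by $h(\alpha)$, monotonicity of $t\mapsto[f(x)]^{t}$ moves the estimate in the required direction only when $f(x)$ lies on the appropriate side of $1$ --- for the model weights $h(t)=t^{r}$ supermultiplicativity is in fact an equality, so the step is automatic, and in general it is cleanest either to restrict to that case or to retain the standing assumption (implicit already in Theorems \ref{thm2} and \ref{thm3}) that $f$ takes values in $[1,\infty)$. Everything else is routine bookkeeping of exponents, and the auxiliary points $x^{\lambda},y^{\lambda}$ remain in $I$ since each lies between the corresponding variable and $1$.
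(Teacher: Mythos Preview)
Your proof is essentially identical to the paper's: the same normalisation $a=\alpha/\lambda$, $b=\beta/\lambda$, the same two-step application of \eqref{eqhGG} using $f(1)=1$, and the same contradiction argument with $x=y=1$, $\beta=\alpha$ for parts (2) and (4). Notably, the paper writes the supermultiplicativity step as an equality ($[f(x)]^{h(a)h(\lambda)}=[f(x)]^{h(\lambda a)}$) and thereby glosses over precisely the monotonicity issue you flag in your final paragraph --- so your caution there is more careful than the original, but the argument matches line for line.
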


\begin{proof}
Let $\alpha, \beta> 0$  be positive real numbers such that $\alpha
+ \beta =\lambda\le 1$.
\begin{enumerate}
    \item Define numbers $a$ and $b$ such as
    $a=\frac{\alpha}{\lambda}$ and $b=\frac{\beta}{\lambda}$. Then $a +b = 1$ and we have the following:
\begin{align*}
 f\left( {x^\alpha  y^\beta  } \right)&= f\left( {x^{\lambda a} y^{\lambda b} }
 \right)\\
 &\le \left[ {f\left( {x^\lambda  } \right)} \right]^{h\left( a \right)} \left[ {f\left( {y^\lambda  } \right)} \right]^{h\left( b \right)}  \\
  &= \left[ {f\left( {x^\lambda   \cdot 1^{1 - \lambda } } \right)} \right]^{h\left( a \right)} \left[ {f\left( {y^\lambda   \cdot 1^{1 - \lambda } } \right)} \right]^{h\left( b \right)}  \\
  &\le \left\{ {\left[ {f\left( x \right)} \right]^{h\left( \lambda  \right)} \left[ {f\left( 1 \right)} \right]^{h\left( {1 - \lambda } \right)} } \right\}^{h\left( a \right)} \left\{ {\left[ {f\left( y \right)} \right]^{h\left( \lambda  \right)} \left[ {f\left( 1 \right)} \right]^{h\left( {1 - \lambda } \right)} } \right\}^{h\left( b \right)}  \\
  &= \left[ {f\left( x \right)} \right]^{h\left( a \right)h\left( \lambda  \right)} \left[ {f\left( y \right)} \right]^{h\left( b \right)h\left( \lambda  \right)}  \\
  &= \left[ {f\left( x \right)} \right]^{h\left( {\lambda a} \right)} \left[ {f\left( y \right)} \right]^{h\left( {\lambda b} \right)}  \\
  &= \left[ {f\left( x \right)} \right]^{h\left( \alpha  \right)} \left[ {f\left( y \right)} \right]^{h\left( \beta
  \right)},
 \end{align*}
where we use that $f$ is ${\rm {G_tG_h}}$, $f(1) = 1$ and $h$ is
supermultiplicative, respectively.\\

\item Suppose that $f(1)\ne 1$. Putting $x = y = 1$ in
\eqref{eq2.17} we get
\begin{align*}
f\left( {1} \right) \le \left[ {f\left( 1 \right)}
\right]^{h\left( \alpha  \right)} \left[ {f\left( 1 \right)}
\right]^{h\left( \beta  \right)}, \qquad {\text{ for all}}
\,\,\alpha, \beta> 0, \,\,\alpha + \beta \le 1.
\end{align*}
Setting $\beta=\alpha$, $\alpha\in \left(0,\frac{1}{2}\right)$,
then $1 \le  \left[f\left( 1 \right)\right]^{\left( {2h\left(
\alpha \right) - 1} \right)}$, it follows that $h\left( \alpha
\right) \ge \frac{1}{2}$,  which  contradicts the assumption of
theorem. So that $f(1)= 1$.
\end{enumerate}
The proofs for cases (3) and (4) are similar to the previous.
Hence, the proof is completely established.
\end{proof}

\subsection{Composition of $h$-${\rm{MN}}$-convex functions}

In the next three results, we assume the g $h_i:J_i \to
\left(0,\infty\right)$, $i=1,2$, $h_2\left(J_2\right)\subseteq
J_1$ are non-negative functions such that
$h_2\left(\alpha\right)+h_2\left(1-\alpha\right) \le 1$, for
$\alpha \left(0,1\right)\subseteq J_2$, let $f:I_1 \to
\left[0,\infty\right)$,  $g:I_2 \to \left[0,\infty\right)$, be
functions with $g\left(I_2\right)\subseteq I_1$.

\begin{theorem}\label{thm5}
Let  $f\left(1\right)=0$. If $h_1$ is a supermultiplicative
function, $f$ is ${\rm{G_tA_{h_1}}}$-convex and increasing
(decreasing) on $I_1$, while $g$ is  ${\rm{A_tG_{h_2}}}$-convex
(-concave) on $I_2$, then the composition $f\circ g$ is
 ${\rm{A_tA_{{h_1\circ h_2}}}}$-convex on $I_2$.

If $h_1$ is a submultiplicative function, $f$ is
 ${\rm{G_tA_{h_1}}}$-concave  and increasing (decreasing) on
$I_1$, while $g$ is  ${\rm{A_tG_{h_2}}}$-convex (-convex) on
$I_2$, then the composition $f\circ g$ is  ${\rm{A_tA_{h_1\circ h_2 }}}$-concave on $I_2$.
\end{theorem}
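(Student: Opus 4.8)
The plan is to obtain the conclusion by composing three inequalities: the defining inequality for $g$, the monotonicity of $f$, and the \emph{functional} form of $G_tA_{h_1}$-convexity already established in Theorem~\ref{thm3}. Fix $x,y\in I_2$ and $t\in(0,1)$ (the cases $t\in\{0,1\}$ are trivial), put $F:=f\circ g$, and set $\alpha:=h_2(t)$, $\beta:=h_2(1-t)$. Since $h_2$ is positive and, by hypothesis, $h_2(t)+h_2(1-t)\le1$ on $(0,1)\subseteq J_2$, we have $\alpha,\beta>0$ and $\alpha+\beta\le1$; moreover $\alpha,\beta\in h_2(J_2)\subseteq J_1$, so $h_1$ may be applied to them. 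These are the only facts about the exponents we will need.

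First I would use that $g$ is $A_tG_{h_2}$-convex: by \eqref{eqhAG} applied to $g$,
\begin{align*}
g\bigl(tx+(1-t)y\bigr)\le \bigl[g(x)\bigr]^{\alpha}\bigl[g(y)\bigr]^{\beta},
\end{align*}
and, since the right-hand side again lies in $I_1$, applying the increasing function $f$ preserves the inequality:
\begin{align*}
F\bigl(tx+(1-t)y\bigr)\le f\!\left(\bigl[g(x)\bigr]^{\alpha}\bigl[g(y)\bigr]^{\beta}\right).
\end{align*}
If $f$ is decreasing instead, one starts from the $A_tG_{h_2}$-concavity of $g$, which reverses the first inequality, and the decreasing $f$ restores the direction, so the two monotonicity cases produce the same bound. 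Next, since $f$ is $G_tA_{h_1}$-convex with $f(1)=0$ and $h_1$ is supermultiplicative, Theorem~\ref{thm3}(1) (inequality \eqref{eq2.15}) applies with the pair $(\alpha,\beta)$ and the arguments $g(x),g(y)\in I_1$:
\begin{align*}
f\!\left(\bigl[g(x)\bigr]^{\alpha}\bigl[g(y)\bigr]^{\beta}\right)\le h_1(\alpha)\,F(x)+h_1(\beta)\,F(y)=(h_1\circ h_2)(t)\,F(x)+(h_1\circ h_2)(1-t)\,F(y).
\end{align*}
Chaining the last two displays shows that $F$ satisfies $F(tx+(1-t)y)\le(h_1\circ h_2)(t)F(x)+(h_1\circ h_2)(1-t)F(y)$, i.e. $f\circ g$ is $A_tA_{h_1\circ h_2}$-convex on $I_2$.

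For the concave assertion I would repeat the argument verbatim with all inequalities reversed: choose the monotonicity of $f$ and the convexity/concavity of $g$ so that the first step yields $F(tx+(1-t)y)\ge f\bigl([g(x)]^{\alpha}[g(y)]^{\beta}\bigr)$, and replace the appeal to Theorem~\ref{thm3}(1) by Theorem~\ref{thm3}(3) (inequality \eqref{eq2.16}), which requires $h_1$ submultiplicative and again $f(1)=0$. No step here is genuinely hard; the only things to watch are (i) that $\alpha,\beta$ are strictly positive with $\alpha+\beta\le1$ so that Theorem~\ref{thm3} is applicable, (ii) that the weighted geometric mean $[g(x)]^{\alpha}[g(y)]^{\beta}$ stays inside $I_1$ so that $f$ can be evaluated there, and (iii) that each monotonicity direction of $f$ is paired with the matching convexity/concavity of $g$.
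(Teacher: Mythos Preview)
Your proof is correct and follows essentially the same approach as the paper: apply the $A_tG_{h_2}$-convexity of $g$, use the monotonicity of $f$ to carry the inequality forward, and then invoke Theorem~\ref{thm3}(1) (respectively (3)) to bound $f\bigl([g(x)]^{h_2(t)}[g(y)]^{h_2(1-t)}\bigr)$ by the appropriate $h_1\circ h_2$-weighted arithmetic mean. Your write-up is in fact more careful than the paper's about the prerequisites for applying Theorem~\ref{thm3} (positivity of the exponents, $\alpha+\beta\le1$, and $f(1)=0$).
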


\begin{proof}
If $g$ is  ${\rm{A_tG_{h_2}}}$-convex on $I_2$ and $f$ increasing
then
\begin{align*}
f \circ g\left( {\alpha x + \left( {1 - \alpha } \right)y} \right)
\le f\left( {\left[ {g\left( x \right)} \right]^{^{h_2 \left(
\alpha  \right)} } \left[ {g\left( y \right)} \right]^{h_2 \left(
{1 - \alpha } \right)} } \right),
\end{align*}
for all $x,y \in I_2$ and $\alpha \in (0,1)$. Using Theorem
\ref{thm3}(1), we obtain that

\begin{align*}
f\left( {\left[ {g\left( x \right)} \right]^{^{h_2 \left( \alpha
\right)} } \left[ {g\left( y \right)} \right]^{h_2 \left( {1 -
\alpha } \right)} } \right)
  &\le h_1 \left( {h_2 \left( \alpha  \right)} \right)f\left( {g\left( x \right)} \right) + h_1 \left( {h_2 \left( {1 - \alpha } \right)} \right)f\left( {g\left( y \right)} \right) \\
  &= \left( {h_1  \circ h_2 } \right)\left( \alpha  \right)\left( {f \circ g} \right)\left( x \right) + \left( {h_1  \circ h_2 } \right)\left( {1 - \alpha } \right)\left( {f \circ g} \right)\left( y
  \right),
\end{align*}
which means that $f\circ g$ is  ${\rm{A_tA_{h_1\circ h_2}}}$-convex on $I_2$.
\end{proof}

\begin{theorem}\label{thm6}
Let  $0\in I_1$ and $f\left(0\right)=1$. If $h_1$ is a
supermultiplicative function, $f$ is  ${\rm{A_tG_{h_1}}}$-convex
and increasing (decreasing) on $I_1$, while $g$ is
${\rm{G_tA_{h_2}}}$-convex (-concave) on $I_2$, then the
composition $f\circ g$ is  ${\rm{G_tG_{h_1\circ h_2}}}$-convex on $I_2$.

If $h_1$ is a submultiplicative function, $f$ is
${\rm{A_tG_{h_1}}}$-concave  and increasing (decreasing) on
$I_1$, while $g$ is  ${\rm{G_tA_{h_2}}}$-convex (-convex) on
$I_2$, then the composition $f\circ g$ is  ${\rm{G_tG_{h_1\circ h_2}}}$-concave on $I_2$.
\end{theorem}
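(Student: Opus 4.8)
The plan is to reuse, almost verbatim, the scheme of the proof of Theorem~\ref{thm5}, replacing the appeal to Theorem~\ref{thm3} by an appeal to Theorem~\ref{thm2} (the functional–inequality extension for ${\rm{A_tG_h}}$-convex functions). First I would peel off the outer function. Fix $x,y\in I_2$ and $\alpha\in(0,1)$. If $f$ is increasing and $g$ is ${\rm{G_tA_{h_2}}}$-convex on $I_2$, then by \eqref{eqhGA} and monotonicity of $f$,
\[
\left(f\circ g\right)\!\left(x^{\alpha}y^{1-\alpha}\right)=f\!\left(g\!\left(x^{\alpha}y^{1-\alpha}\right)\right)\le f\!\left(h_2(\alpha)g(x)+h_2(1-\alpha)g(y)\right),
\]
and in the decreasing branch one instead uses the ${\rm{G_tA_{h_2}}}$-concavity of $g$ together with the order reversal of $f$ to land on the same upper bound.

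Next I would feed this into Theorem~\ref{thm2}(1). The two points are $g(x),g(y)$, which lie in $I_1$ because $g(I_2)\subseteq I_1$; the coefficients are $\alpha':=h_2(\alpha)>0$ and $\beta':=h_2(1-\alpha)>0$, and they satisfy $\alpha'+\beta'=h_2(\alpha)+h_2(1-\alpha)\le 1$ by the blanket hypothesis on $h_2$ imposed ahead of these three theorems. Since $f$ is ${\rm{A_tG_{h_1}}}$-convex with $f(0)=1$ and $h_1$ is supermultiplicative, Theorem~\ref{thm2}(1) applies to give
\[
f\!\left(h_2(\alpha)g(x)+h_2(1-\alpha)g(y)\right)\le\left[f(g(x))\right]^{h_1(h_2(\alpha))}\left[f(g(y))\right]^{h_1(h_2(1-\alpha))}.
\]
Rewriting $h_1(h_2(\cdot))=(h_1\circ h_2)(\cdot)$ and concatenating with the previous display yields
\[
\left(f\circ g\right)\!\left(x^{\alpha}y^{1-\alpha}\right)\le\left[\left(f\circ g\right)(x)\right]^{(h_1\circ h_2)(\alpha)}\left[\left(f\circ g\right)(y)\right]^{(h_1\circ h_2)(1-\alpha)},
\]
which is precisely \eqref{eqhGG} for $f\circ g$, i.e.\ its ${\rm{G_tG_{h_1\circ h_2}}}$-convexity on $I_2$. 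For the second assertion I would run the same two steps with all inequalities reversed: pair the monotonicity of $f$ with the convexity/concavity type of $g$ so that the first step delivers $\left(f\circ g\right)(x^{\alpha}y^{1-\alpha})\ge f(h_2(\alpha)g(x)+h_2(1-\alpha)g(y))$, then invoke Theorem~\ref{thm2}(3) (valid since $f$ is ${\rm{A_tG_{h_1}}}$-concave, $f(0)=1$, and $h_1$ is submultiplicative) to pass to the reversed product inequality.

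I expect the only subtlety to be the bookkeeping of directions in the first step, namely matching the monotonicity of $f$ with the convexity/concavity type of $g$ so that the outer composition inequality points the way Theorem~\ref{thm2} requires, together with the trivial check that $\bigl(h_2(\alpha),h_2(1-\alpha)\bigr)$ is an admissible pair of weights for Theorem~\ref{thm2}. Both are disposed of immediately by the standing assumptions $h_2(\alpha)+h_2(1-\alpha)\le 1$ and $g(I_2)\subseteq I_1$. No estimate beyond Theorem~\ref{thm2} is needed; all the analytic content is already packaged there, exactly as Theorem~\ref{thm3} supplied it for Theorem~\ref{thm5}.
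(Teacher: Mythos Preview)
Your proposal is correct and follows exactly the approach the paper indicates: it mirrors the proof of Theorem~\ref{thm5} step for step, swapping the appeal to Theorem~\ref{thm3}(1) for Theorem~\ref{thm2}(1) (and Theorem~\ref{thm2}(3) in the concave case), with the standing hypothesis $h_2(\alpha)+h_2(1-\alpha)\le 1$ ensuring the weights are admissible. The paper itself records only the one-line reference to Theorem~\ref{thm5} and Theorem~\ref{thm2}(1), so your write-up is a faithful expansion of what the authors intend.
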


\begin{proof}
The proof is similar to the proof of Theorem \ref{thm5} and using
Theorem \ref{thm2}(1).
\end{proof}

\begin{theorem}\label{cor5}
Let    $f\left(1\right)=1$. If $h_1$ is a supermultiplicative
function, $f$ is  ${\rm{G_tG_{h_1}}}$-convex and increasing
(decreasing) on $I_1$, while $g$ is ${\rm{G_tG_{h_2}}}$-convex
(-concave) on $I_2$, then the composition $f\circ g$ is
${\rm{G_tG_{h_1\circ h_2}}}$-convex on $I_2$.

If $h_1$ is a submultiplicative function, $f$ is
${\rm{G_tG_t}}$-concave  and increasing (decreasing) on
$I_1$, while $g$ is ${\rm{G_tG_{h_2}}}$-convex (-convex) on
$I_2$, then the composition $f\circ g$ is ${\rm{G_tG_{h_1\circ h_2}}}$-concave on $I_2$.
\end{theorem}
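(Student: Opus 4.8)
The plan is to mimic the proofs of Theorems \ref{thm5} and \ref{thm6}: transport the weighted geometric mean through $g$ via its ${\rm{G_tG_{h_2}}}$-convexity, carry the resulting inequality across $f$ by monotonicity, and then use the functional inequality of Theorem \ref{thm4} to collapse the two separate exponents into the single weight $h_1\circ h_2$.

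First I would fix $u,v\in I_2$ and $t\in(0,1)\subseteq J_2$. From the ${\rm{G_tG_{h_2}}}$-convexity of $g$, recorded in \eqref{eqhGG}, we have $g\!\left(u^t v^{1-t}\right)\le [g(u)]^{h_2(t)}[g(v)]^{h_2(1-t)}$; since $g(I_2)\subseteq I_1$ and $f$ is increasing on $I_1$, applying $f$ to both sides preserves the direction, so that
\begin{align*}
(f\circ g)\!\left(u^t v^{1-t}\right)=f\!\left(g\!\left(u^t v^{1-t}\right)\right)\le f\!\left([g(u)]^{h_2(t)}[g(v)]^{h_2(1-t)}\right).
\end{align*}

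The decisive step is to estimate the last quantity. Put $\alpha:=h_2(t)$ and $\beta:=h_2(1-t)$; by the standing hypothesis on $h_2$ we have $\alpha,\beta>0$ and $\alpha+\beta\le 1$. Since $f$ is ${\rm{G_tG_{h_1}}}$-convex with $f(1)=1$ and $h_1$ is supermultiplicative, Theorem \ref{thm4}(1) applies with $x=g(u)$, $y=g(v)$ and this pair $\alpha,\beta$, giving
\begin{align*}
f\!\left([g(u)]^{\alpha}[g(v)]^{\beta}\right)\le [f(g(u))]^{h_1(\alpha)}[f(g(v))]^{h_1(\beta)}=[(f\circ g)(u)]^{(h_1\circ h_2)(t)}[(f\circ g)(v)]^{(h_1\circ h_2)(1-t)}.
\end{align*}
Chaining the two displays yields exactly \eqref{eqhGG} for $f\circ g$ with weight $h_1\circ h_2$, i.e.\ $f\circ g$ is ${\rm{G_tG_{h_1\circ h_2}}}$-convex on $I_2$. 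The parenthetical variant ($f$ decreasing, $g$ being ${\rm{G_tG_{h_2}}}$-concave) runs identically: the first inequality for $g$ reverses, the decreasing $f$ reverses it back, and the exponent-collapsing step is untouched. For the second assertion ($h_1$ submultiplicative, $f$ being ${\rm{G_tG_{h_1}}}$-concave) I would repeat the whole argument with every geometric-mean inequality reversed, invoking Theorem \ref{thm4}(3) in place of Theorem \ref{thm4}(1).

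The step I expect to require real care --- the \emph{main obstacle}, such as it is --- is verifying that the hypotheses of Theorem \ref{thm4} are genuinely in force at the point of application: that $[g(u)]^{h_2(t)}[g(v)]^{h_2(1-t)}$ actually lies in $I_1$ (which one reads off from $g(I_2)\subseteq I_1$ together with the domain conventions attached to Theorem \ref{thm4} and the constraint $\alpha+\beta\le 1$), and that the normalization $f(1)=1$ is precisely what legitimizes absorbing the exponent $h_2(\cdot)$ inside $h_1$. Once these are checked, the remainder is a two-step substitution with no further estimates.
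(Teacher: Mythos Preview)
Your proof is correct and follows essentially the same approach as the paper, which merely states that the argument is analogous to Theorem~\ref{thm5} with Theorem~\ref{thm4}(1) supplying the key exponent-collapsing step. You have spelled out precisely the details the paper leaves implicit.
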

\begin{proof}
The proof is similar to the proof of Theorem \ref{thm5} and using
Theorem \ref{thm4}(1).
\end{proof}

Next, we examine functions compositions, one of them is of type
 ${\rm{M_tK_{h_1}}}$-convex while the other is
${\rm{K_tN_{h_2}}}$-convex.
\begin{theorem}\label{thm7}
Let $M,N$ and $K$ be three mean functions. Let  $h_1:J_1 \to
\left(0,\infty\right)$ and $h_1:J_2 \to \left(0,1\right)$,
$h_2\left(J_2\right)\subseteq \left(0,1\right) \subseteq J_1$ are
non-negative functions  for $\alpha\in \left(0,1\right)\subseteq
J_2$ and $h_2\left(\alpha\right)\in \left(0,1\right)\subseteq
J_1$, let $f:I_1 \to \left[0,\infty\right)$,  $g:I_2 \to
\left[0,\infty\right)$, be functions with
$g\left(I_2\right)\subseteq I_1$. If  $f$ is
${\rm{K_tN_{h_1}}}$-convex  and increasing (decreasing) on
$I_1$, while $g$ is  ${\rm{M_tK_{h_2}}}$-convex (-concave) on
$I_2$, then the composition $f\circ g$ is  ${\rm{M_tN_{h_1\circ h_2}}}$-convex on $I_2$. Namely, we explore
this corollary in the table below.
\end{theorem}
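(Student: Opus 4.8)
\emph{Proof proposal.} I would prove this by a direct three-step chaining, carried out uniformly in the mean functions $M,N,K$, the argument being a clean abstraction of the proofs of Theorems \ref{thm5}--\ref{cor5}. Fix $x,y\in I_2$ and $t\in(0,1)\subseteq J_2$ and put $s:=h_2(t)$; by hypothesis $s\in(0,1)\subseteq J_1$, so that $h_1(s)=h_1\bigl(h_2(t)\bigr)=(h_1\circ h_2)(t)$ is well defined, and this is the only place the assumption $h_2(J_2)\subseteq(0,1)\subseteq J_1$ enters. First I would apply the ${\rm{M_tK_{h_2}}}$-convexity of $g$ on $I_2$ to obtain $g\bigl(M(t;x,y)\bigr)\le K\bigl(h_2(t);g(x),g(y)\bigr)=K\bigl(s;g(x),g(y)\bigr)$; by the monotonicity (betweenness) axiom for mean functions the right-hand side lies between $g(x)$ and $g(y)$, hence inside $g(I_2)\subseteq I_1$, as does $g\bigl(M(t;x,y)\bigr)$. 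Next, since $f$ is increasing on $I_1$, applying $f$ preserves this inequality, so that $(f\circ g)\bigl(M(t;x,y)\bigr)\le f\bigl(K(s;g(x),g(y))\bigr)$. Finally I would invoke the ${\rm{K_tN_{h_1}}}$-convexity of $f$ on $I_1$ with the admissible parameter $s\in(0,1)$ in place of $t$, which gives $f\bigl(K(s;g(x),g(y))\bigr)\le N\bigl(h_1(s);f(g(x)),f(g(y))\bigr)$. Concatenating the last two inequalities and rewriting $h_1(s)=(h_1\circ h_2)(t)$ yields
\[
(f\circ g)\bigl(M(t;x,y)\bigr)\ \le\ N\bigl((h_1\circ h_2)(t);\,(f\circ g)(x),\,(f\circ g)(y)\bigr),
\]
which is exactly the statement that $f\circ g\in\overline{\mathcal{MN}}\bigl(h_1\circ h_2,I_2\bigr)$, i.e. $f\circ g$ is ${\rm{M_tN_{h_1\circ h_2}}}$-convex on $I_2$.

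For the variant hypothesis ``$f$ decreasing and $g$ is ${\rm{M_tK_{h_2}}}$-concave'' only the first step changes: concavity of $g$ reverses its inequality, and applying the now decreasing $f$ in the second step reverses it back, after which the third step and the conclusion are verbatim the same. The individual rows of the table accompanying the statement are then read off by substituting for $M,N,K$ the explicit forms $A_h,G_h,H_h$ of Section 2; each row is a literal instance of the display above, so there is nothing further to check case by case.

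The step I expect to require the most care -- and the reason the hypotheses on $h_1$ and $h_2$ are stated as they are -- is feeding $K\bigl(h_2(t);g(x),g(y)\bigr)$ into the ${\rm{K_tN_{h_1}}}$-convexity of $f$ in the third step. If $K_{h_2}$ is read as carrying the single weight $h_2(t)$ (equivalently, if $h_2(t)+h_2(1-t)=1$), the substitution $t\mapsto h_2(t)$ is immediate, as above. If instead one works with the genuinely two-weight generalized means $K_h$ of Section 2, then $K_{h_2}\bigl(g(x),g(y)\bigr)$ carries weights $h_2(t),h_2(1-t)$ whose sum is only $\le 1$, so betweenness can fail and the third step must instead go through the supermultiplicative ``sub-unit-weight'' extension of the appropriate type -- Theorem \ref{thm2}(1), \ref{thm3}(1) or \ref{thm4}(1) -- together with the matching normalisation of $f$ ($f(0)=1$, $f(1)=0$ or $f(1)=1$, respectively), exactly the device already used in Theorems \ref{thm5}--\ref{cor5}. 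Under either reading the inner convexity of $g$, the monotonicity of $f$, and the outer convexity of $f$ are each used precisely once, which I expect to exhaust the proof.
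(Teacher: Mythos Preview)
Your proposal is correct and follows the same three-step chaining as the paper, which proves only one concrete case ($M=H$, $K=A$, $N=H$) and asserts that the rest follow similarly; your abstract version covering all $M,N,K$ at once is cleaner. Your final paragraph is in fact more careful than the paper's own argument, which silently identifies $K_{h_2}\bigl(g(x),g(y)\bigr)$ with $K_s\bigl(g(x),g(y)\bigr)$ for $s=h_2(t)$ without addressing the weight-sum issue you flag.
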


\begin{proof}
We select to prove one of the mentioned cases and the others
follow in similar fashion. For example, if $g$ is
${\rm{H_tA_{h_2}}}$-convex on $I_2$ and $f$ is increasing then
\begin{align*}
f \circ g\left( { \frac{{xy}}{{\alpha x + \left( {1 - \alpha }
\right)y}} } \right) \le f\left( {h_2 \left( 1-\alpha
\right)g\left( x \right) + h_2 \left( { \alpha } \right)g\left( y
\right)} \right),
\end{align*}
for all $x,y \in I_2$ and $\alpha \in (0,1)$. Using Definition
\ref{def5}, we obtain that
\begin{align*}
f\left( {h_2 \left( 1-\alpha \right)g\left( x \right) + h_2 \left(
{ \alpha } \right)g\left( y \right)} \right)
  &\le
\frac{{f\left( {g\left( x \right)} \right)f\left( {g\left( y
\right)} \right)}}{{h_1 \left( {h_2 \left( {  \alpha } \right)}
\right)f\left( {g\left( x \right)} \right) + h_1 \left( {h_2
\left( 1-\alpha  \right)} \right)f\left( {g\left( y \right)}
\right)}}
\\
&=\frac{{\left( {f \circ g} \right)\left( x \right)\left( {f \circ
g} \right)\left( y \right)}}{{\left( {h_1  \circ h_2 }
\right)\left( {  \alpha } \right)\left( {f \circ g} \right)\left(
x \right) + \left( {h_1  \circ h_2 } \right)\left( 1-\alpha
\right)\left( {f \circ g} \right)\left( y \right)}},
\end{align*}
for $h_2\left(\alpha\right)\in \left(0,1\right)$, which shows that
$f\circ g$ is ${\rm{H_tH_{h_1\circ h_2}}}$-convex
on $I_2$.
\end{proof}
\begin{center}
\begin{tabular}{|c|c|c|c|}
  \hline
  $f$ & $g$ & $f\circ g$  \\
  \hline
  ${\rm{A_tA_{h_1}}}$-convex &  ${\rm{A_tA_{h_2}}}$-convex   &   \\
  ${\rm{G_tA_{h_1}}}$-convex &  ${\rm{A_tG_{h_2}}}$-convex   &   ${\rm{A_tA_{ h_1\circ h_2}}}$-convex \\
  ${\rm{H_tA_{h_1}}}$-convex &  ${\rm{A_tH_{h_2}}}$-convex   &  \\
  \hline
  ${\rm{A_tG_{h_1}}}$-convex &  ${\rm{A_tA_{h_2}}}$-convex   &   \\
  ${\rm{G_tG_{h_1}}}$-convex &  ${\rm{A_tG_{h_2}}}$-convex   &  ${\rm{A_tG_{ h_1\circ h_2}}}$-convex \\
  ${\rm{H_tG_{h_1}}}$-convex &  ${\rm{A_tH_{h_2}}}$-convex   &  \\
  \hline
  ${\rm{A_tH_{h_1}}}$-convex &  ${\rm{A_tA_{h_2}}}$-convex   &   \\
  ${\rm{G_tH_{h_1}}}$-convex &  ${\rm{A_tG_{h_2}}}$-convex   &  ${\rm{A_tH_{ h_1\circ h_2}}}$-convex \\
  ${\rm{H_tH_{h_1}}}$-convex &  ${\rm{A_tH_{h_2}}}$-convex   &  \\
  \hline
  ${\rm{A_tA_{h_1}}}$-convex &  ${\rm{G_tA_{h_2}}}$-convex   &   \\
  ${\rm{G_tA_{h_1}}}$-convex &  ${\rm{G_tG_{h_2}}}$-convex   &  ${\rm{G_tA_{ h_1\circ h_2}}}$-convex \\
  ${\rm{H_tA_{h_1}}}$-convex &  ${\rm{G_tH_{h_2}}}$-convex   &  \\
  \hline
  ${\rm{G_tG_{h_1}}}$-convex &  ${\rm{G_tG_{h_2}}}$-convex   &   \\
  ${\rm{A_tG_{h_1}}}$-convex &  ${\rm{G_tA_{h_2}}}$-convex   &  ${\rm{G_tG_{ h_1\circ h_2}}}$-convex \\
  ${\rm{H_tG_{h_1}}}$-convex &  ${\rm{G_tH_{h_2}}}$-convex   &  \\
  \hline
  ${\rm{A_tH_{h_1}}}$-convex &  ${\rm{G_tA_{h_2}}}$-convex   &   \\
  ${\rm{G_tH_{h_1}}}$-convex &  ${\rm{G_tG_{h_2}}}$-convex   &  ${\rm{G_tH_{ h_1\circ h_2}}}$-convex \\
  ${\rm{H_tH_{h_1}}}$-convex &  ${\rm{G_tH_{h_2}}}$-convex   &  \\
  \hline
  ${\rm{A_tA_{h_1}}}$-convex &  ${\rm{H_tA_{h_2}}}$-convex   &   \\
  ${\rm{G_tA_{h_1}}}$-convex &  ${\rm{H_tG_{h_2}}}$-convex   &   ${\rm{H_tA_{h_1\circ h_2}}}$-convex \\
  ${\rm{H_tA_{h_1}}}$-convex &  ${\rm{H_tH_{h_2}}}$-convex   &  \\
  \hline
  ${\rm{A_tG_{h_1}}}$-convex &  ${\rm{H_tA_{h_2}}}$-convex   &   \\
  ${\rm{G_tG_{h_1}}}$-convex &  ${\rm{H_tG_{h_2}}}$-convex   &  ${\rm{H_tG_{h_1\circ h_2}}}$-convex \\
  ${\rm{H_tG_{h_1}}}$-convex &  ${\rm{H_tH_{h_2}}}$-convex   &  \\
    \hline
   ${\rm{H_tH_{h_1}}}$-convex &  ${\rm{H_tH_{h_2}}}$-convex   &   \\
   ${\rm{A_tH_{h_1}}}$-convex &  ${\rm{H_tA_{h_2}}}$-convex   &   ${\rm{H_tH_{ h_1\circ h_2}}}$-convex \\
   ${\rm{G_tH_{h_1}}}$-convex &  ${\rm{H_tG_{h_2}}}$-convex   &  \\
  \hline
  \end{tabular}

\end{center}

\section{Characterization of $h$-${\rm{M_tN_t}}$-convexity }

Let $h:J\to [0,\infty)$ be a non-negative function and let $f : I
\to \mathbb{R}$ be a function. For all points $x_1,x_2,x_3 \in I$,
$x_1<x_2<x_3$ such that $x_2-x_1$, $x_3-x_2$ and $x_3-x_1$ in $J$.
In \cite{V}, Varo\v{s}anec proved that  if $h$ is
supermultiplicative, and
  $f$ is  ${\rm{A_tA_h}}$-convex function, then the
inequality
\begin{align*}
h\left( {x_3  - x_2 } \right) f\left( {x_1 } \right)+h\left( {x_2
- x_1 } \right)f\left( {x_3 } \right) \ge  h\left( {x_3  - x_1 }
\right)f\left( {x_2 } \right),
\end{align*}
holds. Also, if $h$ is submultiplicative, and
 $f$ is   ${\rm{A_tA_h}}$-convex function, then the
above inequality is reversed. In what follows, similar results for
${\rm{M_tN_h}}$-convex functions are proved.
\begin{theorem}\label{thm8}
Let $h:J\to [0,\infty)$ be a non-negative
 function and let $f : I \to \mathbb{R}$ be a
function. For all points $x_1,x_2,x_3 \in I$, $x_1<x_2<x_3$ such
that $x_2-x_1$, $x_3-x_2$ and $x_3-x_1$ in $J$,
\begin{enumerate}
    \item  If $h$ is supermultiplicative, and
  $f$ is  ${\rm{A_tG_h}}$-convex function, then the
following inequality hold:
\begin{align*}
\left[ {f\left( {x_1 } \right)} \right]^{h\left( {x_3  - x_2 }
\right)} \left[ {f\left( {x_3 } \right)} \right]^{h\left( {x_2  -
x_1 } \right)}  \ge \left[ {f\left( {x_2 } \right)}
\right]^{h\left( {x_3  - x_1 } \right)},
\end{align*}

\item  If $h$ is submultiplicative, and
 $f$ is   ${\rm{A_tH_h}}$-convex function, then the
following inequality hold:
 \begin{align*}
h\left( {x_3 -x_1 } \right)f\left( x_1 \right)f\left( x_3 \right)
\ge h\left( {x_2 -x_1} \right)f\left( x_1 \right) f\left( x_2
\right) + h\left( {x_3 -x_2 } \right)f\left( x_3 \right)f\left(
x_2 \right).
\end{align*}
\end{enumerate}
In case of ${\rm{A_tN_h}}$-concavity the inequalities are
reversed.
\end{theorem}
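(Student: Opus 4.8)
The plan is to realise $x_2$ as an affine combination of $x_1$ and $x_3$ and substitute into the defining inequality of the relevant class. Put $t=\dfrac{x_3-x_2}{x_3-x_1}\in(0,1)$, so that $1-t=\dfrac{x_2-x_1}{x_3-x_1}$ and $x_2=tx_1+(1-t)x_3$; since $x_2-x_1,\;x_3-x_2,\;x_3-x_1\in J$, every argument of $h$ occurring below is admissible.

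For part (1), Theorem \ref{thm1}(2) says the hypothesis is equivalent to $\log f$ being $A_tA_h$-convex, so evaluating \eqref{eqhAG} at this $t$ gives $f(x_2)\le[f(x_1)]^{h(t)}[f(x_3)]^{h(1-t)}$. Raising both sides to the power $h(x_3-x_1)>0$ and invoking supermultiplicativity in the forms $h(x_3-x_1)\,h\!\big(\tfrac{x_3-x_2}{x_3-x_1}\big)\le h(x_3-x_2)$ and $h(x_3-x_1)\,h\!\big(\tfrac{x_2-x_1}{x_3-x_1}\big)\le h(x_2-x_1)$ collapses the exponents and produces exactly $[f(x_1)]^{h(x_3-x_2)}[f(x_3)]^{h(x_2-x_1)}\ge[f(x_2)]^{h(x_3-x_1)}$. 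Equivalently, one may take logarithms at the outset and apply to the $A_tA_h$-convex function $\log f$ the three-point inequality for $h$-convex functions recalled immediately before the theorem.

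For part (2), Theorem \ref{thm1}(3) says the hypothesis is equivalent to $g:=1/f$ being $A_tA_h$-concave, so \eqref{eqhAH} rewritten for $g$ at this $t$ reads $g(x_2)\ge h(t)g(x_1)+h(1-t)g(x_3)$. Multiplying by $h(x_3-x_1)>0$ and now using submultiplicativity, $h(x_3-x_1)\,h\!\big(\tfrac{x_3-x_2}{x_3-x_1}\big)\ge h(x_3-x_2)$ and $h(x_3-x_1)\,h\!\big(\tfrac{x_2-x_1}{x_3-x_1}\big)\ge h(x_2-x_1)$, together with $g\ge0$, yields $h(x_3-x_1)g(x_2)\ge h(x_3-x_2)g(x_1)+h(x_2-x_1)g(x_3)$; multiplying through by $f(x_1)f(x_2)f(x_3)>0$ turns this into $h(x_3-x_1)f(x_1)f(x_3)\ge h(x_2-x_1)f(x_1)f(x_2)+h(x_3-x_2)f(x_2)f(x_3)$, as asserted. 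The $A_tN_h$-concave versions of both parts follow by reversing every inequality and interchanging the roles of super- and submultiplicativity.

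The step that needs the most care is making super-/submultiplicativity act on the correct side of the inequality. After scaling by $h(x_3-x_1)$ one always meets a product of the shape $h(x_3-x_1)\cdot h\!\big(\tfrac{\Delta}{x_3-x_1}\big)$ with $\Delta\in\{x_2-x_1,\,x_3-x_2\}$; since $(x_3-x_1)\cdot\tfrac{\Delta}{x_3-x_1}=\Delta$, supermultiplicativity bounds this product above by $h(\Delta)$ while submultiplicativity bounds it below, and one must choose the hypothesis that places $h(\Delta)$ on the side compatible with the target. This is also the place where positivity is really used: that $f$ (hence $1/f$) is positive is what guarantees that changing an exponent, or a nonnegative coefficient, moves the running inequality in the intended direction. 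The remaining manipulations are routine substitution and algebra.
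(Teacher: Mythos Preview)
Your proof is correct and follows essentially the same route as the paper: write $x_2=tx_1+(1-t)x_3$ with $t=\dfrac{x_3-x_2}{x_3-x_1}$, apply the defining inequality of the class, and then use super-/submultiplicativity via $h(x_3-x_1)\,h\!\big(\tfrac{\Delta}{x_3-x_1}\big)\lessgtr h(\Delta)$ to clear the ratios. The only cosmetic difference is that you route parts (1) and (2) through Theorem~\ref{thm1}(2),(3) (reducing to the $A_tA_h$ three-point inequality for $\log f$, respectively $1/f$), whereas the paper stays with the $A_tG_h$ and $A_tH_h$ definitions directly; the algebra is identical. Note that your remark about positivity being what makes the exponent-replacement step go through is the same tacit assumption the paper uses at \eqref{eq3.1}: strictly speaking one needs $\log f$ of fixed sign (e.g.\ $f\ge 1$) for that step, and your proof inherits this from the paper.
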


\begin{proof} Let $x_1,x_2,x_3 \in I$ with $x_1<x_2<x_3$, such
that $x_2-x_1$, $x_3-x_2$ and $x_3-x_1$ in $J$. Consequently,
$\frac{x_2-x_1}{x_3-x_1}, \frac{x_3-x_2}{x_3-x_1} \in
\left(0,1\right)\subseteq J$ and $\frac{x_2-x_1}{x_3-x_1}+
\frac{x_3-x_2}{x_3-x_1} =1$. Also, since $h$ is
super(sub)multiplicative then for all $p,q\in J$ we have
\begin{align*}
h\left( {p } \right) = h\left( {\frac{{p }}{{q }} \cdot q }
\right) \ge (\le) h\left( {\frac{{p}}{{q }}} \right)h\left( {q }
\right),
\end{align*}
and this yield that
\begin{align*}
\frac{h\left( {p } \right)}{h\left( {q } \right)}  \ge (\le)
h\left( {\frac{{p}}{{q }}} \right).
\end{align*}
Setting $t = \frac{{x_3  - x_2 }}{{x_3  - x_1 }}$, $\alpha = x_1$,
$\beta  = x_3$, therefore we have the following cases:
\begin{enumerate}
\item  For $x_2  = t\alpha  + \left( {1 - t} \right)\beta$ and
since $f$ is ${\rm{A_tG_h}}$-convex, then by \eqref{eqhAG}
\begin{align}
f\left( x_2 \right) &\le \left[ {f\left( x_1  \right)}
\right]^{h\left(\frac{{x_3  - x_2}}{{x_3  - x_1 }}\right)} \left[
{f\left( x_3 \right)} \right]^{h\left(\frac{{x_2  - x_1}}{{x_3  -
x_1 }}\right)}\nonumber
\\
&\le \left[ {f\left( x_1  \right)} \right]^{\frac{{h\left(x_3  -
x_2\right)}}{{h\left(x_3  - x_1 \right)}}} \left[ {f\left( x_3
\right)} \right]^{\frac{{h\left(x_2  - x_1\right)}}{{h\left(x_3  -
x_1 \right)}}}, \label{eq3.1}
\end{align}
since $f$ is positive,  then the above inequality equivalent to
\begin{align*}
h\left( {x_3  - x_1 } \right)\log f\left( {x_2 } \right) \le
h\left( {x_3  - x_2 } \right)\log f\left( {x_1 } \right) + h\left(
{x_2  - x_1 } \right)\log f\left( {x_3 } \right).
\end{align*}
Rearranging the terms again we get
\begin{align*}
\left[ {f\left( {x_1 } \right)} \right]^{h\left( {x_3  - x_2 }
\right)} \left[ {f\left( {x_3 } \right)} \right]^{h\left( {x_2  -
x_1 } \right)}  \ge  \left[ {f\left( {x_2 } \right)}
\right]^{h\left( {x_3  - x_1 } \right)},
\end{align*}
as desired.

\item For $x_2  = t\alpha  + \left( {1 - t} \right)\beta$ and
since $f$ is ${\rm{A_tH_h}}$-convex then by \eqref{eqhAH}
 \begin{align}
f\left( x_2 \right) &\le \frac{{ f\left( x_1  \right)f\left( x_3
\right)}}{{h\left( {\frac{x_2 -x_1}{x_3-x_1}} \right)f\left( x_1
\right) + h\left( {\frac{x_3 -x_2}{x_3-x_1} } \right)f\left( x_3
\right)}}\nonumber
\\
&\le \frac{{ h\left( {x_3 -x_1 } \right)f\left( x_1 \right)f\left(
x_3 \right)}}{{h\left( {x_2 -x_1} \right)f\left( x_1 \right) +
h\left( {x_3 -x_2 } \right)f\left( x_3 \right)}},\label{eq3.2}
\end{align}
and this is equivalent to write
 \begin{align*}
h\left( {x_3 -x_1 } \right)f\left( x_1 \right)f\left( x_3 \right)
\ge h\left( {x_2 -x_1} \right)f\left( x_1 \right) f\left( x_2
\right) + h\left( {x_3 -x_2 } \right)f\left( x_3 \right)f\left(
x_2 \right),
\end{align*}
as desired.
\end{enumerate}
Thus, the proof is completely established.
\end{proof}

\begin{corollary}\label{cor6}
Let $h:(0,1)\to [0,\infty)$ be a non-negative
 function and let $f : (0,1) \to \left(0,\infty\right)$ be a
function. For all points $x_1,x_2,x_3 \in (0,1)$, $x_1<x_2<x_3$
such that $x_2-x_1$, $x_3-x_2$ and $x_3-x_1$ in $(0,1)$. Let
$h_r(t)= t^r$, $r \in \left(-\infty,-1\right]\cup
\left[0,1\right]$.
\begin{enumerate}
    \item  If
  $f$ is   ${\rm{A_tG_h}}$-convex function, then the
following inequality hold:
\begin{align*}
\left[ {f\left( {x_1 } \right)} \right]^{ \left( {x_3  - x_2 }
\right)^r} \left[ {f\left( {x_3 } \right)} \right]^{ \left( {x_2 -
x_1 } \right)^r}  \ge \left[ {f\left( {x_2 } \right)}
\right]^{\left( {x_3  - x_1 } \right)^r}.
\end{align*}
Furthermore, if $f(x)=x^{\lambda}$ $(\lambda < 0)$ we get several
Schur type inequalities.\\

\item  If  $f$ is  ${\rm{A_tH_h}}$-convex function, then the
following inequality hold:
 \begin{align*}
 \left( {x_3 -x_1 } \right)^rf\left( x_1 \right)f\left( x_3 \right)
\ge \left( {x_2 -x_1} \right)^rf\left( x_1 \right) f\left( x_2
\right) + \left( {x_3 -x_2 } \right)^r f\left( x_3 \right)f\left(
x_2 \right).
\end{align*}
Furthermore, if $f(x)=x^{\lambda}$ $(-1<\lambda <0)$ we get
several Schur type inequalities.
\end{enumerate}
In case of  ${\rm{A_tN_h}}$-concavity the inequalities are
reversed.
\end{corollary}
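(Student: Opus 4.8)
The plan is to obtain Corollary \ref{cor6} as a direct specialization of Theorem \ref{thm8}. The key observation is that the control function $h_r(t)=t^r$ is \emph{multiplicative}: for any $p,q>0$ one has $h_r(pq)=(pq)^r=p^rq^r=h_r(p)h_r(q)$. In particular $h_r$ is simultaneously supermultiplicative and submultiplicative, with equality, so both of the hypotheses on $h$ appearing in Theorem \ref{thm8} are automatically satisfied when $h=h_r$.

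First I would apply Theorem \ref{thm8}(1) with $h=h_r$: since $h_r$ is supermultiplicative and $f$ is $A_tG_{h_r}$-convex, its conclusion reads exactly
\[
\bigl[f(x_1)\bigr]^{(x_3-x_2)^r}\bigl[f(x_3)\bigr]^{(x_2-x_1)^r} \ge \bigl[f(x_2)\bigr]^{(x_3-x_1)^r},
\]
which is statement (1). Likewise Theorem \ref{thm8}(2) with $h=h_r$ (using that $h_r$ is submultiplicative and that $f$ is $A_tH_{h_r}$-convex) gives statement (2) once each $h_r(x_j-x_i)$ is rewritten as $(x_j-x_i)^r$. The reversed inequalities in the concave case come from the concavity clause of Theorem \ref{thm8}, at no extra cost since multiplicativity goes both ways. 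The restriction $r\in(-\infty,-1]\cup[0,1]$ is not needed for this part beyond keeping $h_r$ an admissible nonnegative weight on $J$; it enters only in the next step, where I use $h_r(t)\ge t$ for $t\in(0,1)$, valid whenever $r\le1$.

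For the Schur type remarks, the only point requiring care is to check that the power function $f(x)=x^{\lambda}$ belongs to the class in question. For part (1) with $\lambda<0$ I would use Theorem \ref{thm1}(2): $f$ is $A_tG_h$-convex if and only if $\log f=\lambda\log x$ is $h$-convex; the map $\lambda\log x$ is ordinarily convex and nonnegative on $(0,1)$, and since $h_r(t)\ge t$ there, Remark \ref{remark2} promotes ordinary convexity to $h_r$-convexity. Substituting $f(x)=x^{\lambda}$ into statement (1) and taking logarithms then yields the asserted Schur type inequality for $x_1<x_2<x_3$. Part (2), with $-1<\lambda<0$, is handled analogously through Theorem \ref{thm1}(3), reducing $A_tH_h$-convexity of $x^{\lambda}$ to $h$-concavity of $x^{-\lambda}$ (where $-\lambda\in(0,1)$), then substituting into statement (2) and dividing through by $x_1^{\lambda}x_2^{\lambda}x_3^{\lambda}$. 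I do not foresee any genuine obstacle: the substance is the lone remark that $t\mapsto t^r$ is multiplicative, after which the two inequalities drop out of Theorem \ref{thm8}. The one delicate spot is lining up the sign of $\lambda$, the admissible range of $r$, and the convex/concave directions in Theorem \ref{thm1} so that the power functions in the concluding remarks really do lie in the stated classes — bookkeeping rather than a true difficulty.
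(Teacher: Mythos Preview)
Your approach is correct and matches the paper's: Corollary~\ref{cor6} is stated immediately after Theorem~\ref{thm8} with no separate proof, the intended argument being exactly the specialization $h=h_r$ together with the observation that $t\mapsto t^r$ is multiplicative (hence both super- and submultiplicative), which you identify explicitly. One small caution on the ``Furthermore'' clause in part~(2): your ``analogously'' via Theorem~\ref{thm1}(3) asks for $h_r$-\emph{concavity} of $x^{-\lambda}$, but when $h_r(t)\ge t$ ordinary concavity does \emph{not} promote to $h_r$-concavity (the inequality goes the wrong way in Remark~\ref{remark2}); the paper does not justify that remark either, so this is a looseness in the statement rather than a flaw in your reduction of the main inequalities.
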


\begin{theorem}\label{thm9}
Let $h:J\to [0,\infty)$ be a non-negative
 function and let $f : I \to \mathbb{R}$ be a
function. For all points $x_1,x_2,x_3 \in I$, $x_1<x_2<x_3$ such
that $\ln \left( {{\textstyle{{x_3 } \over {x_2 }}}} \right)$,
$\ln \left( {{\textstyle{{x_2 } \over {x_1 }}}} \right)$ and $\ln
\left( {{\textstyle{{x_3 } \over {x_1 }}}} \right)$ in $J$.
\begin{enumerate}
\item  If $h$ is supermultiplicative, and $f$ is
 ${\rm{G_tA_h}}$-convex function, then the following inequality
hold:
\begin{align*}
 h\left( {\ln \left( {{\textstyle{{x_3 } \over {x_2 }}}} \right)}
\right)\cdot f\left( x_1 \right) + h\left( {\ln \left(
{{\textstyle{{x_2 } \over {x_1 }}}} \right)} \right) \cdot
f\left(x_3 \right)\ge h\left( {\ln \left( {{\textstyle{{x_3 }
\over {x_1 }}}} \right)} \right) f\left( x_2 \right).
\end{align*}

\item  If $h$ is supermultiplicative, and $f$ is
 ${\rm{G_tG_h}}$-convex function, then the following inequality
hold:
\begin{align*} \left[ {f\left( x_1 \right)} \right]^{h\left(
{\ln \left( {{\textstyle{{x_3 } \over {x_1 }}}} \right)} \right) }
\left[ {f\left( x_3 \right)} \right]^{h\left( {\ln \left(
{{\textstyle{{x_2 } \over {x_1 }}}} \right)} \right)}\ge f\left(
x_2 \right)^{h\left( {\ln \left( {{\textstyle{{x_3 } \over {x_1
}}}} \right)} \right)}.
\end{align*}

\item  If $h$ is submultiplicative, and
 $f$ is   ${\rm{G_tH_h}}$-convex function, then the
following inequality hold:
\begin{align*}
h\left( {\ln \left(\frac{x_3}{x_1}\right)  } \right)f\left( x_1
\right)f\left( x_3 \right)+\ge h\left( {\ln
\left(\frac{x_2}{x_1}\right) } \right) f\left(x_1 \right)f\left(
x_2 \right) + h\left( {\ln \left(\frac{x_3}{x_2}\right) } \right)
f\left( x_3 \right)f\left( x_2 \right).
\end{align*}
\end{enumerate}
In case of ${\rm{G_tN_h}}$-concavity the inequalities are
reversed.
\end{theorem}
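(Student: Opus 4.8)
The plan is to run, for the three geometric--mean cases, the same three--step scheme used in the proof of Theorem~\ref{thm8}, with the first--slot arithmetic mean replaced by a geometric mean. Given $x_1<x_2<x_3$ in $I$, set
\[
t=\frac{\ln\!\left(x_3/x_2\right)}{\ln\!\left(x_3/x_1\right)},\qquad \alpha=x_1,\qquad \beta=x_3,
\]
so that $1-t=\ln(x_2/x_1)/\ln(x_3/x_1)$, both $t$ and $1-t$ lie in $(0,1)\subseteq J$, and a direct logarithmic computation gives $x_2=\alpha^{t}\beta^{1-t}$; thus $x_2$ occupies exactly the place held by $t\alpha+(1-t)\beta$ in Theorem~\ref{thm8}. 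Put $p=\ln(x_3/x_2)$ and $q=\ln(x_3/x_1)$, so $p/q=t$ and $p,q,p/q\in J$. Super(sub)multiplicativity of $h$ then gives $h(p)=h\big((p/q)\cdot q\big)\ge(\le)\,h(p/q)h(q)$, i.e.
\[
h(t)\le(\ge)\,\frac{h(\ln(x_3/x_2))}{h(\ln(x_3/x_1))},\qquad h(1-t)\le(\ge)\,\frac{h(\ln(x_2/x_1))}{h(\ln(x_3/x_1))},
\]
which are the two replacement inequalities to be inserted in each case.

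For part (1) I would substitute $(\alpha,\beta,t)$ into the defining inequality~\eqref{eqhGA} of ${\rm G_tA_h}$-convexity to get $f(x_2)\le h(t)f(x_1)+h(1-t)f(x_3)$, replace $h(t),h(1-t)$ by the larger quantities above (valid since $f>0$), and multiply through by $h(\ln(x_3/x_1))$. For part (2), the same substitution in~\eqref{eqhGG} gives $f(x_2)\le[f(x_1)]^{h(t)}[f(x_3)]^{h(1-t)}$; taking logarithms, replacing the coefficients of $\log f(x_1)$ and $\log f(x_3)$ by the bounds above, multiplying by $h(\ln(x_3/x_1))$ and exponentiating produces the claimed inequality. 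For part (3), \eqref{eqhGH} gives $f(x_2)\le f(x_1)f(x_3)\big/\big(h(1-t)f(x_1)+h(t)f(x_3)\big)$; inverting (everything positive) yields $1/f(x_2)\ge h(1-t)/f(x_3)+h(t)/f(x_1)$, and now submultiplicativity supplies \emph{lower} bounds on $h(t),h(1-t)$, after which clearing denominators by $h(\ln(x_3/x_1))f(x_1)f(x_2)f(x_3)$ gives the asserted inequality. In every case the ${\rm G_tN_h}$-concave statement follows by reversing every inequality throughout.

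The algebraic rearrangements are routine; the one delicate point---exactly the subtle step already present in the proof of Theorem~\ref{thm8}(1)---occurs in part (2): after taking logarithms one replaces $h(t)\log f(x_1)$ by $\frac{h(\ln(x_3/x_2))}{h(\ln(x_3/x_1))}\log f(x_1)$, and this preserves the direction of the inequality only when $\log f(x_i)\ge 0$, i.e.\ when $f\ge 1$ at the relevant points (the tacit hypothesis also underlying Theorem~\ref{thm8}(1)). One must likewise track the inequality direction carefully when inverting in the harmonic case and when passing to the concave versions; beyond that, the argument is bookkeeping.
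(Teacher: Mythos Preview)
Your proposal is correct and follows essentially the same route as the paper's own proof: the same choice of $t=\dfrac{\ln(x_3/x_2)}{\ln(x_3/x_1)}$, $\alpha=x_1$, $\beta=x_3$ so that $x_2=\alpha^t\beta^{1-t}$, the same use of super/submultiplicativity to bound $h(t)$ and $h(1-t)$ by ratios $h(\ln(x_i/x_j))/h(\ln(x_3/x_1))$, and the same case-by-case substitution into \eqref{eqhGA}, \eqref{eqhGG}, \eqref{eqhGH} followed by clearing the common denominator $h(\ln(x_3/x_1))$. Your observation about part~(2)---that replacing the exponent $h(t)$ by a larger number only preserves the inequality when $\log f(x_i)\ge 0$---is a genuine subtlety that the paper's proof passes over in exactly the same way (it writes ``since $f$ is positive'' before performing the very step you flag), so your proof is at least as careful as the original on this point.
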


\begin{proof}
Let $x_1,x_2,x_3 \in I$ with $x_1<x_2<x_3$, such that  $\ln \left(
{{\textstyle{{x_3 } \over {x_2 }}}} \right)$, $\ln \left(
{{\textstyle{{x_2 } \over {x_1 }}}} \right)$ and $\ln \left(
{{\textstyle{{x_3 } \over {x_1 }}}} \right)$ in $J$. Consequently,
$\frac{\ln x_2-\ln x_1}{\ln x_3-\ln x_1}, \frac{\ln x_3-\ln
x_2}{\ln x_3-\ln x_1} \in \left(0,1\right)\subseteq J$ and
$\frac{\ln x_2-\ln x_1}{\ln x_3-\ln x_1}+ \frac{\ln x_3-\ln
x_2}{\ln x_3-\ln x_1}  =1$. Setting $t = \frac{\ln x_3-\ln
x_2}{\ln x_3-\ln x_1} $, $\alpha = x_1$, $\beta  = x_3$, therefore
we have the following cases:
\begin{enumerate}
\item For $x_2  = \alpha^t \beta^{1-t}$ and since $f$ is
${\rm{G_tA_h}}$-convex then by \eqref{eqhGA}
\begin{align}
f\left( x_2 \right) &\le h\left(\frac{{\ln \left( x_3 \right) -
\ln \left( x_2  \right)}}{{\ln \left( x_3  \right) - \ln \left(x_1
\right)}}\right) \cdot f\left( x_1  \right) +  h\left(\frac{{\ln
\left( x_2 \right) - \ln \left( x_1 \right)}}{{\ln \left( x_3
\right) - \ln \left( x_1 \right)}}\right) \cdot f\left(x_3 \right)
\nonumber\\
&\le \frac{{h\left(\ln \left( x_3 \right) - \ln \left( x_1
\right)\right)}}{{h\left(\ln \left( x_3  \right) - \ln \left(x_1
\right)\right)}} \cdot f\left( x_1  \right) +   \frac{{h\left(\ln
\left( x_2 \right) - \ln \left( x_1 \right)\right)}}{{h\left(\ln
\left( x_3  \right) - \ln \left(x_1 \right)\right)}} \cdot
f\left(x_3 \right), \label{eq3.3}
\end{align}
and this is equivalent to write
\begin{align*}
 h\left( {\ln \left( {{\textstyle{{x_3 } \over {x_2 }}}} \right)}
\right)\cdot f\left( x_1 \right) + h\left( {\ln \left(
{{\textstyle{{x_2 } \over {x_1 }}}} \right)} \right) \cdot
f\left(x_3 \right)\ge h\left( {\ln \left( {{\textstyle{{x_3 }
\over {x_1 }}}} \right)} \right) f\left( x_2 \right),
\end{align*}
as desired.

\item  For $x_2  = \alpha^t \beta^{1-t}$ and since $f$ is
${\rm{G_tG_h}}$-convex then by \eqref{eqhGG}
\begin{align}
f\left( x_2 \right) &\le \left[ {f\left( x_1  \right)}
\right]^{h\left(\frac{{\ln \left( x_3 \right) - \ln \left( x_2
\right)}}{{\ln \left( x_3  \right) - \ln \left( x_1
\right)}}\right)} \left[ {f\left( x_3  \right)}
\right]^{h\left(\frac{{\ln \left( x_2 \right) - \ln \left( x_1
\right)}}{{\ln \left( x_3 \right) - \ln \left( x_1
\right)}}\right)}
\nonumber\\
&\le \left[ {f\left( x_1  \right)} \right]^{\frac{{h\left(\ln
\left( x_3 \right) - \ln \left( x_2 \right)\right)}}{{h\left(\ln
\left( x_3 \right) - \ln \left( x_1 \right)\right)}}} \left[
{f\left( x_3 \right)} \right]^{\frac{{h\left(\ln \left( x_2
\right) - \ln \left( x_1 \right)\right)}}{{h\left(\ln \left( x_3
\right) - \ln \left( x_1 \right)\right)}}}, \label{eq3.4}
\end{align}
since $f$ is positive therefore
\begin{align*}
h\left( {\ln \left( {{\textstyle{{x_3 } \over {x_1 }}}} \right)}
\right) \log f\left( x_2 \right)\le h\left( {\ln \left(
{{\textstyle{{x_3 } \over {x_1 }}}} \right)} \right) \log \left[
{f\left( x_1 \right)} \right]+ h\left( {\ln \left(
{{\textstyle{{x_2 } \over {x_1 }}}} \right)} \right) \log\left[
{f\left( x_3 \right)} \right],
\end{align*}
and this equivalent to write
\begin{align*}
\left[ {f\left( x_1 \right)} \right]^{h\left( {\ln \left(
{{\textstyle{{x_3 } \over {x_1 }}}} \right)} \right) } \left[
{f\left( x_3 \right)} \right]^{h\left( {\ln \left(
{{\textstyle{{x_2 } \over {x_1 }}}} \right)} \right)}\ge f\left(
x_2 \right)^{h\left( {\ln \left( {{\textstyle{{x_3 } \over {x_1
}}}} \right)} \right)},
\end{align*}
as desired.

\item For $x_2  = \alpha^t \beta^{1-t}$ and since $f$ is
${\rm{G_tH_h}}$-convex then by \eqref{eqhGH}
\begin{align}
f\left( x_2 \right) &\le \frac{{f\left( x_1  \right)f\left( x_3
\right) }}{{h\left( {\frac{{\ln x_2  - \ln x_1 }}{{\ln x_3  - \ln
x_1 }}} \right) f\left(x_1  \right) + h\left( {\frac{{\ln x_3 -
\ln x_2 }}{{\ln x_3  - \ln x_1 }}} \right) f\left( x_3
\right)}}\nonumber
\\
&\le \frac{{h\left( {\ln x_3  - \ln x_1 } \right)f\left( x_1
\right)f\left( x_3 \right) }}{{ h\left( {\ln x_2  - \ln x_1 }
\right) f\left(x_1  \right) + h\left( {\ln x_3  - \ln x_2 }
\right) f\left( x_3 \right)}}, \label{eq3.5}
\end{align}
which is equivalent to write
\begin{align*}
h\left( {\ln \left(\frac{x_3}{x_1}\right)  } \right)f\left( x_1
\right)f\left( x_3 \right)- h\left( {\ln
\left(\frac{x_2}{x_1}\right) } \right) f\left(x_1 \right)f\left(
x_2 \right) \ge  h\left( {\ln \left(\frac{x_3}{x_2}\right) }
\right) f\left( x_3 \right)f\left( x_2 \right),
\end{align*}
as desired.
\end{enumerate}
Thus, the proof is completely established.
\end{proof}

\begin{corollary}\label{cor7}
Let $h:(0,1)\to [0,\infty)$ be a non-negative
 function and let $f : (0,1) \to \mathbb{R}$ be a
function. For all points $x_1,x_2,x_3 \in (0,1)$, $x_1<x_2<x_3$
such that $\ln \left( {{\textstyle{{x_3 } \over {x_2 }}}}
\right)$, $\ln \left( {{\textstyle{{x_2 } \over {x_1 }}}} \right)$
and $\ln \left( {{\textstyle{{x_3 } \over {x_1 }}}} \right)$ in
$(0,1)$. For $h_r(t)= t^r$, $r \in \left(-\infty,-1\right]\cup
\left[0,1\right]$.
\begin{enumerate}
\item  If   $f$ is  ${\rm{G_tA_h}}$-convex function, then the
following inequality hold:
\begin{align*}
  \left( {\ln \left( {{\textstyle{{x_3 } \over {x_2 }}}} \right)}
\right)^r\cdot f\left( x_1 \right) + \left( {\ln \left(
{{\textstyle{{x_2 } \over {x_1 }}}} \right)} \right)^r \cdot
f\left(x_3 \right)\ge \left( {\ln \left( {{\textstyle{{x_3 } \over
{x_1 }}}} \right)} \right)^r f\left( x_2 \right).
\end{align*}
Furthermore, if $f(x)=x^{\lambda}$ $(\lambda \in \mathbb{R})$ we
get several
Schur type inequalities.\\

\item  If $f$ is  ${\rm{G_tG_h}}$-convex function, then the
following inequality hold:
\begin{align*} \left[ {f\left( x_1 \right)} \right]^{ \left(
{\ln \left( {{\textstyle{{x_3 } \over {x_1 }}}} \right)} \right)^r
} \left[ {f\left( x_3 \right)} \right]^{\left( {\ln \left(
{{\textstyle{{x_2 } \over {x_1 }}}} \right)} \right)^r}\ge f\left(
x_2 \right)^{\left( {\ln \left( {{\textstyle{{x_3 } \over {x_1
}}}} \right)} \right)^r}.
\end{align*}

\item  If
 $f$ is   ${\rm{G_tH_h}}$-convex function, then the
following inequality hold:
\begin{align*}
\left( {\ln \left(\frac{x_3}{x_1}\right)  } \right)^rf\left( x_1
\right)f\left( x_3 \right)+\ge \left( {\ln
\left(\frac{x_2}{x_1}\right) } \right)^r f\left(x_1 \right)f\left(
x_2 \right) +  \left( {\ln \left(\frac{x_3}{x_2}\right) }
\right)^r f\left( x_3 \right)f\left( x_2 \right).
\end{align*}
\end{enumerate}
In case of  ${\rm{G_tN_h}}$-concavity the inequalities are
reversed.
\end{corollary}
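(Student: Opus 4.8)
The plan is to obtain Corollary~\ref{cor7} as the special case $h=h_r$ of Theorem~\ref{thm9}, and then to justify the Schur-type addendum in part~(1) by checking that every monomial $f(x)=x^{\lambda}$ belongs to the class ${\rm{G_tA_{h_r}}}$.

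First I would record the elementary observation that the power function $h_r(t)=t^{r}$ is \emph{multiplicative}: $h_r(pq)=(pq)^{r}=p^{r}q^{r}=h_r(p)h_r(q)$ for all $p,q>0$. Hence $h_r$ is simultaneously supermultiplicative and submultiplicative, so every hypothesis invoked in Theorem~\ref{thm9} --- supermultiplicativity in parts~(1) and (2), submultiplicativity in part~(3) --- holds automatically for each admissible exponent; the hypothesis $r\in(-\infty,-1]\cup[0,1]$ is not actually needed in the derivation of the three inequalities and is carried over only for uniformity with the power-function examples used elsewhere in the paper. Applying Theorem~\ref{thm9}(1)--(3) with $h=h_r$ and evaluating $h_r$ at the logarithmic arguments, e.g.\ $h_r\!\bigl(\ln(x_3/x_2)\bigr)=\bigl(\ln(x_3/x_2)\bigr)^{r}$ and likewise for $\ln(x_2/x_1)$ and $\ln(x_3/x_1)$, turns the three conclusions of Theorem~\ref{thm9} verbatim into the three displayed inequalities; the ${\rm{G_tN_h}}$-concave statements follow in the same way from the reversed inequalities of Theorem~\ref{thm9}. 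The auxiliary facts used in that proof --- that $\frac{\ln x_2-\ln x_1}{\ln x_3-\ln x_1}$ and $\frac{\ln x_3-\ln x_2}{\ln x_3-\ln x_1}$ lie in $(0,1)$ and sum to $1$ --- are unchanged (here one uses that $\ln$ is increasing, so $\ln x_1<\ln x_2<\ln x_3$), so nothing further has to be checked.

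For the Schur-type remark in part~(1) I would substitute $f(x)=x^{\lambda}$. The key fact is that $x^{\lambda}$ is ${\rm{G_tA_t}}$-convex (in the sense of \eqref{eqhGA}) for \emph{every} $\lambda\in\mathbb{R}$, since $(\alpha^{t}\beta^{1-t})^{\lambda}=(\alpha^{\lambda})^{t}(\beta^{\lambda})^{1-t}\le t\,\alpha^{\lambda}+(1-t)\,\beta^{\lambda}$ is just the weighted AM--GM inequality applied to the positive numbers $\alpha^{\lambda},\beta^{\lambda}$; because $h_r(t)=t^{r}\ge t$ on $(0,1)$ whenever $r\le 1$, Remark~\ref{remark2} upgrades this to ${\rm{G_tA_{h_r}}}$-convexity (alternatively, by Theorem~\ref{thm1}(4), the function $t\mapsto f(\tau e^{-t})=\tau^{\lambda}e^{-\lambda t}$ is convex and hence $h_r$-convex). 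Feeding $f(x)=x^{\lambda}$ into the inequality already established in part~(1) and rewriting $\ln(x_i/x_j)$ as $\ln x_i-\ln x_j$ then exhibits the advertised family of Schur-type inequalities in the ordered variables $\ln x_1<\ln x_2<\ln x_3$. Since every step is a direct specialization of Theorem~\ref{thm9}, there is no genuine obstacle in this corollary; the only point requiring a short separate argument is the class-membership check for the monomials used in the Schur-type remark.
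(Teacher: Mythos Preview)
Your proposal is correct and matches the paper's approach: the paper states Corollary~\ref{cor7} immediately after Theorem~\ref{thm9} with no separate proof, so the intended argument is precisely the specialization $h=h_r$ that you carry out, relying on the multiplicativity of $t\mapsto t^r$ to satisfy both the super- and submultiplicativity hypotheses. Your additional verification that $f(x)=x^{\lambda}$ is ${\rm G_tA_{h_r}}$-convex (via AM--GM and Remark~\ref{remark2}) actually goes beyond what the paper supplies for the Schur-type addendum, which the paper merely asserts without justification.
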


\begin{theorem}\label{thm10}
Let $h:J\to [0,\infty)$ be a non-negative
 function and let $f : I \to \mathbb{R}$ be a
function. For all points $x_1,x_2,x_3 \in I$, $x_1<x_2<x_3$ such
that $x_1 \left(x_3  - x_2\right)$, $x_3 \left(x_2  - x_1\right)$
 and $x_2 \left(x_3  - x_1\right)$ in $J$,
\begin{enumerate}
\item  If $h$ is supermultiplicative, and $f$ is
 ${\rm{H_tA_h}}$-convex function, then the following inequality
hold:
\begin{align*}
h\left(x_1 \left(x_3  - x_2\right) \right)f\left( x_1 \right) +
h\left(x_3 \left( {x_2 - x_1 } \right)\right) f\left( x_3 \right)
\ge h\left(x_2\left( {x_3 - x_1 } \right)\right)f\left( x_2
\right),
\end{align*}

\item  If $h$ is supermultiplicative, and $f$ is
 ${\rm{H_tG_h}}$-convex function, then the following inequality
hold:
\begin{align*}
\left[ {f\left( x_1  \right)} \right]^{h\left(x_1\left( {x_3 -
x_2} \right)\right)} \cdot \left[ {f\left( x_3  \right)}
\right]^{h\left(x_3 \left( {x_2 - x_1 } \right)\right)} \ge \left[
f\left( x_2 \right)\right]^{h\left(x_2\left( {x_3 - x_1 }
\right)\right)},
\end{align*}

\item  If $h$ is submultiplicative, and
 $f$ is   ${\rm{H_tH_h}}$-convex function, then the
following inequality hold:
\begin{align*}
h\left(x_3 \left( {x_2 - x_1 } \right)\right)f\left( x_1
\right)f\left( x_2 \right) + h\left(x_1 \left( {x_3 - x_2}
\right)\right)f\left( x_2 \right) f\left( x_3 \right) \le
h\left(x_2\left( {x_3  - x_1 } \right)\right) f\left( x_1
\right)f\left( x_3 \right),
\end{align*}
\end{enumerate}
In case of  ${\rm{H_tN_h}}$-concavity the inequalities are
reversed.
\end{theorem}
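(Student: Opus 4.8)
The plan is to follow the template of the proofs of Theorems \ref{thm8} and \ref{thm9}, but now writing $x_2$ as a suitable weighted harmonic mean of $x_1$ and $x_3$. First I would pick the weight
\begin{align*}
t = \frac{x_3\left(x_2 - x_1\right)}{x_2\left(x_3 - x_1\right)}, \qquad 1-t = \frac{x_1\left(x_3 - x_2\right)}{x_2\left(x_3 - x_1\right)},
\end{align*}
and check the elementary facts $t,1-t\in\left(0,1\right)\subseteq J$ (immediate from $x_1<x_2<x_3$), $t+\left(1-t\right)=1$, and the key identity $\frac{x_1x_3}{t x_1 + \left(1-t\right)x_3}=x_2$, i.e. $x_2 = H_t\left(x_1,x_3\right)$. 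As in the two earlier proofs I would also record that, for $p\in\left\{x_1\left(x_3-x_2\right),\,x_3\left(x_2-x_1\right)\right\}$ and $q = x_2\left(x_3-x_1\right)$ (all in $J$ by hypothesis, with $p/q\in\left\{t,1-t\right\}\subseteq\left(0,1\right)$), the identity $h\left(p\right)=h\left(\frac{p}{q}\cdot q\right)$ together with super(sub)multiplicativity gives $\frac{h\left(p\right)}{h\left(q\right)}\le h\left(\frac{p}{q}\right)$ when $h$ is supermultiplicative and $\frac{h\left(p\right)}{h\left(q\right)}\ge h\left(\frac{p}{q}\right)$ when $h$ is submultiplicative.

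For part (1) I would set $\alpha=x_1$, $\beta=x_3$ in \eqref{eqhHA}; since $x_2=H_t\left(x_1,x_3\right)$ this reads $f\left(x_2\right)\le h\left(1-t\right)f\left(x_1\right)+h\left(t\right)f\left(x_3\right)$. Using supermultiplicativity to replace $h\left(1-t\right)$ by $\frac{h\left(x_1\left(x_3-x_2\right)\right)}{h\left(x_2\left(x_3-x_1\right)\right)}$ and $h\left(t\right)$ by $\frac{h\left(x_3\left(x_2-x_1\right)\right)}{h\left(x_2\left(x_3-x_1\right)\right)}$, and then multiplying through by $h\left(x_2\left(x_3-x_1\right)\right)>0$, gives the asserted inequality. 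Part (2) goes the same way from \eqref{eqhHG}: take logarithms exactly as in the proof of Theorem \ref{thm9}(2), multiply by $h\left(x_2\left(x_3-x_1\right)\right)$, bound $h\left(x_2\left(x_3-x_1\right)\right)h\left(1-t\right)$ and $h\left(x_2\left(x_3-x_1\right)\right)h\left(t\right)$ above by $h\left(x_1\left(x_3-x_2\right)\right)$ and $h\left(x_3\left(x_2-x_1\right)\right)$ via supermultiplicativity, and exponentiate. For part (3) I would start from \eqref{eqhHH}, namely $f\left(x_2\right)\le\frac{f\left(x_1\right)f\left(x_3\right)}{h\left(t\right)f\left(x_1\right)+h\left(1-t\right)f\left(x_3\right)}$; here $h$ is submultiplicative, so $h\left(t\right)\ge\frac{h\left(x_3\left(x_2-x_1\right)\right)}{h\left(x_2\left(x_3-x_1\right)\right)}$ and $h\left(1-t\right)\ge\frac{h\left(x_1\left(x_3-x_2\right)\right)}{h\left(x_2\left(x_3-x_1\right)\right)}$, which bounds the denominator from below, and clearing denominators (all terms positive) yields the stated inequality. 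In every case, replacing convexity by ${\rm{H_tN_h}}$-concavity reverses each inequality verbatim, which gives the last sentence of the theorem.

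The only genuine content here is the choice of $t$ making $x_2$ the $t$-harmonic mean of the endpoints, together with the bookkeeping of keeping the direction of the multiplicativity estimate consistent with the direction of the target inequality --- supermultiplicativity for the outer means $A_h$ and $G_h$, submultiplicativity for the outer mean $H_h$, just as in Theorems \ref{thm8} and \ref{thm9}. I expect the only mildly delicate step to be the logarithm manipulation in part (2), mirroring Theorem \ref{thm9}(2); everything else is a direct transcription.
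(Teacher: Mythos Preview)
Your plan is exactly the paper's proof: the same weight $t=\dfrac{x_3(x_2-x_1)}{x_2(x_3-x_1)}$ making $x_2=H_t(x_1,x_3)$, the same application of \eqref{eqhHA}, \eqref{eqhHG}, \eqref{eqhHH}, and the same use of super/submultiplicativity to pass from $h(p/q)$ to $h(p)/h(q)$. One slip to fix: in your preliminary remark you have the inequality directions swapped---supermultiplicativity of $h$ gives $h(p)=h\!\left(\tfrac{p}{q}\cdot q\right)\ge h\!\left(\tfrac{p}{q}\right)h(q)$, hence $\dfrac{h(p)}{h(q)}\ge h\!\left(\tfrac{p}{q}\right)$ (and the reverse for submultiplicative); your subsequent applications in parts (1)--(3) use the correct direction, so this is only a typo in the setup paragraph.
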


\begin{proof}
Let $x_1,x_2,x_3 \in I$ with $x_1<x_2<x_3$, such that  $x_1
\left(x_3  - x_2\right), x_3 \left(x_2  - x_1\right), x_2
\left(x_3 - x_1\right) \in J$. And $\frac{{x_1 \left(x_3  -
x_2\right) }}{{x_2\left( {x_3 - x_1 } \right)}}  , \frac{{x_3
\left(x_2  - x_1\right) }}{{x_2\left( {x_3 - x_1 } \right)}} \in
\left(0,1\right)\subseteq J$, so that   $\frac{{x_1 \left(x_3  -
x_2\right) }}{{x_2\left( {x_3 - x_1 } \right)}} + \frac{{x_3
\left(x_2  - x_1\right) }}{{x_2\left( {x_3 - x_1 } \right)}} =1$.
 Setting $t = \frac{{x_3
\left(x_2  - x_1\right) }}{{x_2\left( {x_3 - x_1 } \right)}}$,
$\alpha = x_1$, $\beta  = x_3$, therefore we have the following
cases:
\begin{enumerate}
\item For  $x_2 = \frac{{\alpha \beta }}{{t\alpha + \left( {1 - t}
\right)\beta }} $  and since $f$ is ${\rm{H_tA_h}}$-convex then by
\eqref{eqhHA}
\begin{align}
f\left( x_2 \right) &\le h\left(\frac{{x_1 \left(x_3  - x_2\right)
}}{{x_2\left( {x_3 - x_1 } \right)}}\right)f\left( x_1 \right)
+h\left( \frac{{x_3 \left( {x_2 - x_1 } \right)}}{{x_2\left( {x_3
- x_1 } \right)}} \right)f\left( x_3 \right)
\nonumber\\
&\le \frac{{h\left(x_1 \left(x_3  - x_2\right)
\right)}}{{h\left(x_2\left( {x_3 - x_1 } \right)\right)}}f\left(
x_1 \right) + \frac{{h\left(x_3 \left( {x_2 - x_1 }
\right)\right)}}{{h\left(x_2\left( {x_3 - x_1 } \right)\right)}}
f\left( x_3 \right), \label{eq3.6}
\end{align}
which is equivalent to write
\begin{align*}
h\left(x_1 \left(x_3  - x_2\right) \right)f\left( x_1 \right) +
h\left(x_3 \left( {x_2 - x_1 } \right)\right) f\left( x_3 \right)
\ge h\left(x_2\left( {x_3 - x_1 } \right)\right)f\left( x_2
\right),
\end{align*}
as desired.

\item For  $x_2 = \frac{{\alpha \beta }}{{t\alpha + \left( {1 - t}
\right)\beta }} $  and since $f$ is ${\rm{H_tG_h}}$-convex then by
\eqref{eqhHG}
\begin{align}
f\left( x_2 \right) &\le  \left[ {f\left( x_1  \right)}
\right]^{h\left(\frac{{x_1\left( {x_3  - x_2} \right)}}{{x_2\left(
{x_3 - x_1 } \right)}}\right)} \left[ {f\left( x_3  \right)}
\right]^{h\left(\frac{{x_3 \left( {x_2 - x_1 }
\right)}}{{x_2\left( {x_3 - x_1} \right)}}\right)}
\nonumber\\
&\le  \left[ {f\left( x_1  \right)}
\right]^{\frac{{h\left(x_1\left( {x_3 - x_2}
\right)\right)}}{{h\left(x_2\left( {x_3 - x_1 } \right)\right)}}}
\left[ {f\left( x_3  \right)} \right]^{\frac{{h\left(x_3 \left(
{x_2 - x_1 } \right)\right)}}{{h\left(x_2\left( {x_3 - x_1}
\right)\right)}}},\label{eq3.7}
\end{align}
and this equivalent to write
\begin{align*}
\left[ {f\left( x_1  \right)} \right]^{h\left(x_1\left( {x_3 -
x_2} \right)\right)} \cdot \left[ {f\left( x_3  \right)}
\right]^{h\left(x_3 \left( {x_2 - x_1 } \right)\right)} \ge \left[
f\left( x_2 \right)\right]^{h\left(x_2\left( {x_3 - x_1 }
\right)\right)},
\end{align*}
as desired.

\item For  $x_2 = \frac{{\alpha \beta }}{{t\alpha + \left( {1 - t}
\right)\beta }} $  and since $f$ is ${\rm{H_tH_h}}$-convex then by
\eqref{eqhHH}
\begin{align}
f\left( x_2 \right) &\le \frac{{ f\left( x_1  \right)f\left( x_3
\right)}}{{h\left(\frac{x_3 \left( {x_2 - x_1 } \right)}{x_2\left(
{x_3  - x_1 } \right)}\right)f\left( x_1 \right) +
h\left(\frac{x_1 \left( {x_3 - x_2} \right)}{x_2\left( {x_3  - x_1
} \right)}\right)f\left( x_3 \right)}}
\nonumber\\
&\le \frac{{h\left(x_2\left( {x_3  - x_1 } \right)\right) f\left(
x_1  \right)f\left( x_3 \right)}}{{h\left(x_3 \left( {x_2 - x_1 }
\right)\right)f\left( x_1 \right) + h\left(x_1 \left( {x_3 - x_2}
\right)\right)f\left( x_3 \right)}}, \label{eq3.8}
\end{align}
and this equivalent to write
\begin{align*}
h\left(x_3 \left( {x_2 - x_1 } \right)\right)f\left( x_1
\right)f\left( x_2 \right) + h\left(x_1 \left( {x_3 - x_2}
\right)\right)f\left( x_2 \right) f\left( x_3 \right) \le
h\left(x_2\left( {x_3  - x_1 } \right)\right) f\left( x_1
\right)f\left( x_3 \right),
\end{align*}
as desired.
\end{enumerate}
Thus, the proof is completely established.
\end{proof}

\begin{corollary}\label{cor8}
Let $h:(0,1)\to [0,\infty)$ be a non-negative
 function and let $f : (0,1) \to \mathbb{R}$ be a
function. For all points $x_1,x_2,x_3 \in (0,1)$, $x_1<x_2<x_3$
such that $x_1 \left(x_3  - x_2\right)$, $x_3 \left(x_2  -
x_1\right)$
 and $x_2 \left(x_3  - x_1\right)$ in $(0,1)$.
For $h_r(t)= t^r$, $r \in \left(-\infty,-1\right]\cup
\left[0,1\right]$.
\begin{enumerate}
\item  If  $f$ is ${\rm{H_tA_h}}$-convex function, then the
following inequality hold:
\begin{align*}
 \left(x_1 \left(x_3  - x_2\right) \right)^rf\left( x_1 \right) +
\left(x_3 \left( {x_2 - x_1 } \right)\right)^r f\left( x_3 \right)
\ge \left(x_2\left( {x_3 - x_1 } \right)\right)^rf\left( x_2
\right).
\end{align*}
Furthermore, if $f(x)=x^{\lambda}$ $(\lambda >0 )$ we get several
Schur type inequalities.\\

\item  If  $f$ is $h$-${\rm{H_tG_t}}$-convex function, then the
following inequality hold:
\begin{align*}
\left[ {f\left( x_1  \right)} \right]^{\left(x_1\left( {x_3 - x_2}
\right)\right)^r} \cdot \left[ {f\left( x_3  \right)}
\right]^{\left(x_3 \left( {x_2 - x_1 } \right)\right)^r} \ge
\left[ f\left( x_2 \right)\right]^{\left(x_2\left( {x_3 - x_1 }
\right)\right)^r}.
\end{align*}

\item  If  $f$ is  ${\rm{H_tH_h}}$-convex function, then the
following inequality hold:
\begin{align*}
\left(x_3 \left( {x_2 - x_1 } \right)\right)^rf\left( x_1
\right)f\left( x_2 \right) + \left(x_1 \left( {x_3 - x_2}
\right)\right)^rf\left( x_2 \right) f\left( x_3 \right) \le
\left(x_2\left( {x_3  - x_1 } \right)\right)^r f\left( x_1
\right)f\left( x_3 \right).
\end{align*}
Furthermore, if $f(x)=x^{\lambda}$ $(1>\lambda > 0)$ we get
several Schur type inequalities.
\end{enumerate}
In case of ${\rm{H_tN_h}}$-concavity the inequalities are
reversed.
\end{corollary}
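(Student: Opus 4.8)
The statement is precisely the specialization of Theorem \ref{thm10} to the power weight $h=h_r$, supplemented by a convenient choice of $f$ in the two ``Schur type'' addenda. First I would record the key fact that legitimizes the specialization: the power function $h_r(p)=p^r$ is \emph{multiplicative} on $(0,\infty)$, since $h_r(pq)=(pq)^r=p^r q^r=h_r(p)h_r(q)$. Consequently $h_r$ is simultaneously supermultiplicative and submultiplicative, so \emph{both} families of hypotheses occurring in Theorem \ref{thm10} --- ``$h$ supermultiplicative'', used in parts (1) and (2), and ``$h$ submultiplicative'', used in part (3) --- hold automatically for every admissible exponent $r$. Moreover the three quantities $x_1(x_3-x_2)$, $x_3(x_2-x_1)$ and $x_2(x_3-x_1)$ are assumed to lie in $(0,1)=J$, so every expression $h_r(\,\cdot\,)=(\,\cdot\,)^r$ written below is well defined and finite.

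Next I would simply insert $h=h_r$ into Theorem \ref{thm10}(1)--(3), i.e.\ replace each symbol $h(\,\cdot\,)$ by $(\,\cdot\,)^r$. Part (1) of the theorem then reads verbatim as the inequality claimed in part (1) of the corollary; part (2) becomes the claimed inequality in part (2) (here $h_r$ appears in the exponents of $f(x_1),f(x_2),f(x_3)$); and part (3) becomes the claimed inequality in part (3). The closing clause --- that ${\rm{H_tN_h}}$-concavity reverses each of these inequalities --- is inherited unchanged from the corresponding clause of Theorem \ref{thm10}. This already establishes the three displayed inequalities; there is no genuine difficulty here, only the bookkeeping of the three index expressions $x_i(x_j-x_k)$.

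Finally, for the Schur type consequences I would take $f(x)=x^{\lambda}$ and use Theorem \ref{thm1} together with Remark \ref{remark2} to certify that $f$ belongs to the class for which the corresponding inequality applies. For part (1): by Theorem \ref{thm1}(7), $f$ is ${\rm{H_tA_{h_r}}}$-convex iff $f(1/x)=x^{-\lambda}$ is $h_r$-convex on $(1,\infty)$; for $\lambda>0$ the map $x\mapsto x^{-\lambda}$ is convex in the ordinary sense, hence ${\rm{A_tA_t}}$-convex, and since $h_r(t)=t^r\ge t$ for $t\in(0,1)$ (because $r\le 1$), Remark \ref{remark2} promotes this to $h_r$-convexity; substituting $f(x)=x^{\lambda}$ into part (1) yields the Schur type inequality
\begin{align*}
\left(x_1\left(x_3-x_2\right)\right)^{r}x_1^{\lambda}+\left(x_3\left(x_2-x_1\right)\right)^{r}x_3^{\lambda}\ge\left(x_2\left(x_3-x_1\right)\right)^{r}x_2^{\lambda}.
\end{align*}
The same reasoning with Theorem \ref{thm1}(9) handles part (3): $f(x)=x^{\lambda}$ is ${\rm{H_tH_{h_r}}}$-convex iff $1/f(1/x)=x^{\lambda}$ is $h_r$-concave on $(1,\infty)$, which for $0<\lambda<1$ follows from ordinary concavity of $x\mapsto x^{\lambda}$ together with the concavity half of Remark \ref{remark2}; substituting $f(x)=x^{\lambda}$ into part (3) produces
\begin{align*}
\left(x_3\left(x_2-x_1\right)\right)^{r}x_1^{\lambda}x_2^{\lambda}+\left(x_1\left(x_3-x_2\right)\right)^{r}x_2^{\lambda}x_3^{\lambda}\le\left(x_2\left(x_3-x_1\right)\right)^{r}x_1^{\lambda}x_3^{\lambda}.
\end{align*}
The only point calling for a little attention --- scarcely an obstacle --- is this last step: one must match the convexity versus concavity direction and the admissible ranges of $\lambda$ (and, implicitly, of $r$) correctly when invoking Theorem \ref{thm1} and Remark \ref{remark2}, so that $x^{\lambda}$ genuinely lies in the class to which the corollary's inequality applies; everything preceding it is a direct substitution into Theorem \ref{thm10}, justified solely by the multiplicativity of $p\mapsto p^r$.
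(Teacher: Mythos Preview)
Your main argument --- that Corollary \ref{cor8} is the direct specialization of Theorem \ref{thm10} to $h=h_r$, justified by the multiplicativity of $p\mapsto p^r$ (hence $h_r$ is simultaneously super- and submultiplicative) --- is correct and is exactly the paper's implicit approach: no separate proof is given for the corollary, and the three displayed inequalities are obtained precisely by writing $h(\cdot)=(\cdot)^r$ in Theorem \ref{thm10}(1)--(3).

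The only genuine slip is in your justification of the Schur addendum for part~(3). You invoke ``the concavity half of Remark \ref{remark2}'' to upgrade the ordinary concavity of $x\mapsto x^{\lambda}$ (for $0<\lambda<1$) to $h_r$-concavity on $(1,\infty)$, but that direction of Remark \ref{remark2} requires $h(t)\le t$ on $(0,1)$, i.e.\ $t^{r}\le t$, i.e.\ $r\ge 1$ --- the opposite of the corollary's stated range $r\in(-\infty,-1]\cup[0,1]$. In fact the needed inequality $(tx+(1-t)y)^{\lambda}\ge t^{r}x^{\lambda}+(1-t)^{r}y^{\lambda}$ fails already at $r=0$ (take $t\to 0^{+}$), so $x^{\lambda}$ is \emph{not} $h_r$-concave on $(1,\infty)$ for general $r<1$, and Theorem \ref{thm1}(9) does not yield ${\rm{H_tH_{h_r}}}$-convexity of $x^{\lambda}$ there. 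Your Schur argument for part~(1) is fine, since it uses the \emph{convexity} direction of Remark \ref{remark2}, which requires $h(t)\ge t$ and does hold for all $r\le 1$. The paper is itself noncommittal here (``we get several Schur type inequalities''), so the part-(3) claim may only be intended at $r=1$; but as you have written it, your part-(3) Schur justification is valid only for $r=1$, not across the full stated range.
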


\begin{remark}
In \cite{MP}, Mitrinovi\'{c} and Pe\v{c}ari\'{c} proved the
validity of the inequality
\begin{align*}
\left( {x_1  - x_2 } \right)\left( {x_1  - x_3 } \right)f\left(
{x_1 } \right) + \left( {x_2  - x_1 } \right)\left( {x_2  - x_3 }
\right)f\left( {x_2 } \right) + \left( {x_3  - x_1 } \right)\left(
,{x_3  - x_2 } \right)f\left( {x_3 } \right) \ge 0
\end{align*}
for all $x_1,x_2,x_3 \in (0,1)$ and $f \in Q(I)$. Moreover, if
$f(x)=x^{\lambda}$ $(\lambda \in \mathbb{R})$, then the inequality
is of Schur type, see (\cite{MPF}, p.117). A similar inequality
for monotone convex functions was proved by Wright in \cite{W}. A
generalization to $h$-convex type functions was also presented in
\cite{V}.

In Corollaries \ref{cor6}--\ref{cor8}, if we choose $r=-1$, i.e.,
$h\left(x\right)=x^{-1}$, then several inequalities for
${\rm{M_tN_h}}$-convex functions can be deduced. For inequalities
of Schur type choose $f(x)=x^{\lambda}$
$(\lambda\in\mathbb{\mathbb{R}})$, taking into account that some
additional assumption on $\lambda$ have to be made to guarantee
the ${\rm{M_tN_h}}$-convexity of $f$.
\end{remark}

\section{Jensen's  type inequalities}
The weighted Arithmetic, Geometric, and Harmonic Means for
$n$-points $x_1,x_2,\cdots, x_n$ $(n\ge2)$ are defined
respectively, to be
\begin{align*}
 A  \left( {x_1 ,x_2 , \ldots ,x_n } \right) &= \sum\limits_{k = 1}^n {t_k x_k }  \\
 G  \left( {x_1 ,x_2 , \ldots ,x_n } \right) &= \prod\limits_{k = 1}^n {\left( {x_k } \right)^{t_k } }  \\
 H \left( {x_1 ,x_2 , \ldots ,x_n } \right) &= \frac{1}{{A_{t_k } \left( {\frac{{t_1 }}{{x_1 }},\frac{{t_2 }}{{x_2 }}, \ldots ,\frac{{t_n }}{{x_n }}} \right)}} = \frac{1}{{\sum\limits_{k = 1}^n {\frac{{t_k }}{{x_k }}} }},
 \end{align*}
where $t_k \in \left [0,1\right]$ such that $\sum\limits_{k = 1}^n
{t_k } = 1$ and  $\left( {x_1 ,x_2 , \ldots ,x_n } \right)\in
\left(0,\infty\right)^n$. The weighted form of the HM--GM--AM
inequality is known as (\cite{NP}, p. 11):
\begin{align*}
 H  \left( {x_1 ,x_2 , \ldots ,x_n } \right)\le G \left( {x_1 ,x_2 , \ldots ,x_n } \right)\le
 A
\left( {x_1 ,x_2 , \ldots ,x_n } \right).
\end{align*}
Let $w_1, w_2,\cdots,w_n$ be positive real numbers $(n \ge 2)$ and
$h:J\to \mathbb{R}$ be a non-negative supermultiplicative
function. In \cite{V}, Varo\v{s}anec discussed the case that, if
 $f$ is a non-negative  ${\rm{A_tA_h}}$-convex on $I$, then for $x_1,x_2,\cdots,x_n \in I$ the following inequality
 holds
\begin{align*}
f\left( {\frac{1}{{W_n }}\sum\limits_{k = 1}^n {w_k x_k } }
\right) \le \sum\limits_{k = 1}^n {h\left( {\frac{{w_k }}{{W_{n }
}}} \right) f\left( {x_k } \right)},
\end{align*}
where $W_n=\sum\limits_{k = 1}^n{w_k}$. If $h$ is
submultiplicative function and
 $f$ is an $h$-${\rm{A_tA_t}}$-concave then inequality  is
 reversed. A converse result was also given in \cite{V}.  For more new results
 see \cite{D2}, \cite{D3},  \cite{MNNA}, \cite{OTG}, \cite{PS} and
 \cite{XWQ}.\\

In what follows, Jensen's type inequalities for
${\rm{M_tN_h}}$-convex functions are introduced.
\begin{theorem}
\label{thm11}Let $w_1, w_2,\cdots,w_n$ be positive real numbers
$(n \ge 2)$, and $W_n=\sum\limits_{k = 1}^n{w_k}$.
\begin{enumerate}
\item  If $h$ is a non-negative supermultiplicative function and
 $f$ is a non-negative  ${\rm{A_tG_h}}$-convex on $I$, then for $x_1,x_2,\cdots,x_n \in I$ the following inequality
 holds
\begin{align}
f\left( {\frac{1}{{W_n }}\sum\limits_{k = 1}^n {w_k x_k } }
\right) \le \prod\limits_{k = 1}^n {\left\{ {\left[ {f\left( {x_k
} \right)} \right]^{h\left( {\frac{{w_k }}{{W_{n } }}} \right)} }
\right\}}. \label{eq4.1}
\end{align}
If $h$ is submultiplicative function and
 $f$ is an  ${\rm{A_tG_h}}$-concave then inequality  is
 reversed.\\

\item  If $h$ is a non-negative submultiplicative function and
 $f$ is a non-negative  ${\rm{A_tH_h}}$-convex on $I$, then for $x_1,x_2,\cdots,x_n \in I$ the following inequality
 holds
 \begin{align}
f\left( {\frac{1}{{W_n }}\sum\limits_{k = 1}^n {w_k x_k } }
\right) \le \left( {\sum\limits_{k = 1}^n {\frac{{h\left(
{{\textstyle{{w_k } \over{W_n }}}} \right)}}{{f\left( {x_k }
\right)}}}} \right)^{ - 1}. \label{eq4.2}
\end{align}
If $h$ is supermultiplicative function and
 $f$ is an  ${\rm{A_tH_h}}$-concave then inequality  is reversed.
\end{enumerate}
\end{theorem}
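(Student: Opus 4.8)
The plan is to derive both inequalities from the two–point defining relations \eqref{eqhAG} and \eqref{eqhAH} by induction on $n$, or — more transparently — by passing to an additive picture via the characterizations of Theorem \ref{thm1} and then invoking the classical $h$–convex/concave Jensen inequality of Varo\v{s}anec recalled at the beginning of this section.

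For part (1), set $g=\log f$. By Theorem \ref{thm1}(2), $f$ is $A_tG_h$–convex (resp. $A_tG_h$–concave) exactly when $g$ is $h$–convex (resp. $h$–concave), so the $h$–convex Jensen inequality from \cite{V}, valid under supermultiplicativity of $h$, gives
\[
g\!\left(\frac{1}{W_n}\sum_{k=1}^n w_k x_k\right)\ \le\ \sum_{k=1}^n h\!\left(\frac{w_k}{W_n}\right) g(x_k),
\]
and exponentiating both sides yields \eqref{eq4.1}; the submultiplicative/concave case is the same with the inequality reversed. Carried out directly, the induction runs as follows: the base case $n=2$ is precisely \eqref{eqhAG} with $t=w_1/W_2$, hence $1-t=w_2/W_2$; for the inductive step one writes
\[
\frac{1}{W_n}\sum_{k=1}^n w_k x_k\ =\ \frac{w_n}{W_n}\,x_n+\frac{W_{n-1}}{W_n}\,y,\qquad y:=\frac{1}{W_{n-1}}\sum_{k=1}^{n-1} w_k x_k\in I,
\]
applies \eqref{eqhAG} with $\alpha=x_n$, $\beta=y$, $t=w_n/W_n$, substitutes the inductive bound for $f(y)$ and raises it to the power $h(W_{n-1}/W_n)\ge 0$; the nested exponents then collapse because supermultiplicativity gives $h(W_{n-1}/W_n)\,h(w_k/W_{n-1})\le h(w_k/W_n)$.

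For part (2), set instead $g=1/f$, which is automatically positive on $I$. By Theorem \ref{thm1}(3), $f$ is $A_tH_h$–convex exactly when $g$ is $h$–concave; since $g$ is nonnegative, the reversed ($h$–concave) Jensen inequality of \cite{V}, valid when $h$ is submultiplicative, applies and gives
\[
\frac{1}{f\!\left(\tfrac{1}{W_n}\sum_{k=1}^n w_k x_k\right)}\ \ge\ \sum_{k=1}^n \frac{h\!\left(w_k/W_n\right)}{f(x_k)},
\]
whence \eqref{eq4.2} on taking reciprocals; the reversed statement for $A_tH_h$–concavity follows from the direct Jensen inequality with $h$ supermultiplicative. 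Alternatively one repeats the induction above, the base case being \eqref{eqhAH} and the step splitting off $x_n$ as before, now using submultiplicativity $h(W_{n-1}/W_n)\,h(w_k/W_{n-1})\ge h(w_k/W_n)$ in the reciprocal form. The point needing the most attention is the sign bookkeeping in part (1): the Jensen inequality of \cite{V} is phrased for nonnegative functions, whereas $g=\log f$ may change sign, so when replacing $h(w_k/W_{n-1})h(W_{n-1}/W_n)$ by the larger $h(w_k/W_n)$ one must keep track of the sign of $\log f(x_k)$; no such difficulty arises in part (2), where $1/f>0$ by construction.
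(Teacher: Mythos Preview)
Your direct-induction argument is exactly the paper's proof: split off $x_n$, apply the two-point inequality \eqref{eqhAG} (resp.\ \eqref{eqhAH}), insert the inductive hypothesis, and collapse exponents via super/submultiplicativity of $h$. The paper does precisely this and nothing more.

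The alternative you sketch --- passing to $g=\log f$ for part~(1) and $g=1/f$ for part~(2) via Theorem~\ref{thm1} and then quoting Varo\v{s}anec's additive Jensen inequality --- is not in the paper. For part~(2) it is clean and unproblematic, since $1/f>0$ and Varo\v{s}anec's result applies directly; this is arguably tidier than redoing the induction.

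The sign concern you raise in part~(1) is genuine and, notably, the paper's own proof has the very same gap: in the final induction step the paper silently passes from the exponent $h(W_{n-1}/W_n)\,h(w_k/W_{n-1})$ to $h(w_k/W_n)$ (even writing it as an equality), but supermultiplicativity only gives the latter as an \emph{upper bound} for the former, and $[f(x_k)]^{a}\le[f(x_k)]^{b}$ for $a\le b$ requires $f(x_k)\ge 1$. So your proposal is at least as careful as the paper, and your diagnosis of where the argument is fragile is correct.
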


\begin{proof}
Our proof carries by induction. In case $n=2$, the both results
hold.

\begin{enumerate}
\item Assume \eqref{eq4.1} holds for $n-1$ and we are going to
prove it for $n$.
\begin{align*}
 f\left( {\frac{1}{{W_n }}\sum\limits_{k = 1}^n {w_k x_k } } \right)
  &= f\left( {\frac{{w_n }}{{W_n }}x_n  + \sum\limits_{k = 1}^{n - 1} {\frac{{w_k }}{{W_n }}x_k } } \right) \\
  &= f\left( {\frac{{w_n }}{{W_n }}x_n  + \frac{{W_{n - 1} }}{{W_n }}\sum\limits_{k = 1}^{n - 1} {\frac{{w_k }}{{W_{n - 1} }}x_k } } \right) \\
  &\le \left[ {f\left( {x_n } \right)} \right]^{h\left( {\frac{{w_n }}{{W_n }}} \right)} \left[ {f\left( {\sum\limits_{k = 1}^{n - 1} {\frac{{w_k }}{{W_{n - 1} }}x_k } } \right)} \right]^{h\left( {\frac{{W_{n - 1} }}{{W_n }}} \right)}  \\
  &\le \left[ {f\left( {x_n } \right)} \right]^{h\left( {\frac{{w_n }}{{W_n }}} \right)}  \cdot \prod\limits_{k = 1}^{n - 1} {\left\{ {\left[ {f\left( {x_k } \right)} \right]^{h\left( {\frac{{W_{n - 1} }}{{W_n }}} \right)h\left( {\frac{{w_k }}{{W_{n - 1} }}} \right)} } \right\}}  \\
  &= \prod\limits_{k = 1}^n {\left\{ {\left[ {f\left( {x_k } \right)} \right]^{h\left( {\frac{{w_k }}{{W_{n } }}} \right)} } \right\}},  \\
 \end{align*}
and this proves the desired result in \eqref{eq4.1}.

\item  Assume \eqref{eq4.2} holds for $n-1$ and we are going to
prove it for $n$.
\begin{align*}
  f\left( {\frac{1}{{W_n }}\sum\limits_{k = 1}^n {w_k x_k } } \right)
   = f\left( {\frac{{w_n }}{{W_n }}x_n  + \sum\limits_{k = 1}^{n - 1} {\frac{{w_k }}{{W_n }}x_k } }
  \right)
  &= f\left( {\frac{{w_n }}{{W_n }}x_n  + \frac{{W_{n - 1} }}{{W_n }}\sum\limits_{k = 1}^{n - 1} {\frac{{w_k }}{{W_{n - 1} }}x_k } } \right) \\
  &\le \frac{1}{{   \frac{h\left( {\frac{{w_n }}{{W_n }}} \right)}{f\left( {x_n} \right)} + \frac{h\left( {\frac{{W_{n - 1} }}{{W_n }}} \right)}{f\left( {\sum\limits_{k = 1}^{n - 1} {\frac{{w_k }}{{W_{n - 1} }}x_k } } \right)}}} \\
  &\le \frac{1}{{   \frac{h\left( {\frac{{w_n }}{{W_n }}} \right)}{f\left( {x_n} \right)} +h\left( {{\textstyle{{W_{n - 1} } \over {W_n  }}}}\right)\sum\limits_{k = 1}^{n - 1}{\frac{{h\left({{\textstyle{{w_k } \over {W_{n-1} }}}}\right)}}{{f\left( {x_k }\right)}}}}} \\
    &\le \frac{1}{{   \frac{h\left( {\frac{{w_n }}{{W_n }}} \right)}{f\left( {x_n} \right)} +\sum\limits_{k = 1}^{n - 1} {\frac{{h\left( {{\textstyle{{w_k }\over {W_n }}}} \right)}}{{f\left( {x_k } \right)}}}}}
   \le \frac{1}{\sum\limits_{k = 1}^n {\frac{{h\left( {{\textstyle{{w_k } \over{W_n }}}} \right)}}{{f\left( {x_k }
   \right)}}}},
 \end{align*}
which proves the desired result in \eqref{eq4.2}.
\end{enumerate}
Hence, by Mathematical Induction both statements are hold for all
$n\ge2$, and therefore the proof is completely established.
\end{proof}

The corresponding converse versions of Jensen inequality for
 ${\rm{A_tG_h}}$-convex and ${\rm{A_tH_h}}$-convex are
incorporated in the following theorem.
\begin{theorem}
Let $w_1, w_2,\cdots,w_n$ be positive real numbers $(n \ge 2)$,
and $\left(m,M\right)\subseteq I$.
\begin{enumerate}
\item If $h:\left(0,\infty\right) \to \left(0,\infty\right)$ is a
non-negative supermultiplicative function and
 $f$ is positive  ${\rm{A_tG_h}}$-convex, then for every finite sequence of points $x_1,\cdots, x_n \in \left(m,M\right)\subseteq I$
 we have
 \begin{align}
 \prod\limits_{k = 1}^n {\left[f\left( x_k \right)\right]^{h\left(
{\frac{{w_k }}{{W_n }}} \right)} } \le   \prod\limits_{k = 1}^n
{\left\{\left[ {f\left( m  \right)} \right]^{h\left(\frac{{M -
x_k}}{{M  - m }}\cdot \frac{{w_k }}{{W_n }}\right)\cdot  } \left[
{f\left( M \right)} \right]^{h\left(\frac{{ x_k - m }}{{ M - m
}}\cdot \frac{{w_k }}{{W_n }}\right) }\right\}}, \label{eq4.3}
\end{align}
If $h$ is submultiplicative function and $f$ is an
 ${\rm{A_tG_h}}$-concave then inequality  is reversed.\\

\item If $h:\left(0,\infty\right) \to \left(0,\infty\right)$ is a
non-negative submultiplicative function and
 $f$ is positive  ${\rm{A_tH_h}}$-convex, then for every finite sequence of points $x_1,\cdots, x_n \in \left(m,M\right) \subseteq I$
 we have
 \begin{align}
\left(\sum\limits_{k = 1}^n  \frac{h\left( {\frac{{w_k }}{{W_n }}}
\right) }{f\left( x_k \right)} \right)^{-1}\le
\left(\sum\limits_{k = 1}^n \frac{ h\left( {\frac{{x_k  - m}}{{M -
m}}} \right) f\left( m \right) + h\left( {\frac{{M-x_k}}{{M - m}}}
\right) f\left(M \right)}{ f\left( m \right)f\left( M
\right)}h\left( {\frac{{w_k }}{{W_n }}} \right) \right)^{-1},
\label{eq4.4}
\end{align}
If $h$ is supermultiplicative function and $f$ is an
 ${\rm{A_tH_h}}$-concave then inequality  is reversed.\\
\end{enumerate}

\end{theorem}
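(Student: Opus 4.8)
The plan is to deduce both \eqref{eq4.3} and \eqref{eq4.4} from a single pointwise estimate at each node $x_k$ — the classical device behind converse Jensen inequalities — and then to aggregate these estimates with the weights $h\!\left(\tfrac{w_k}{W_n}\right)$; no induction is needed, so the argument will be uniform in $n$. For each $k$ I would fix the interpolation parameter $t_k:=\tfrac{M-x_k}{M-m}\in(0,1)$, so that $1-t_k=\tfrac{x_k-m}{M-m}$ and $x_k=t_k m+(1-t_k)M$. This convex-combination representation is the only place the hypothesis $x_k\in(m,M)$ enters, and the labelling has to be kept straight since the conclusion is not symmetric in $m$ and $M$.

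For part (1), I would apply \eqref{eqhAG} with $\alpha=m$, $\beta=M$, $t=t_k$ to get $f(x_k)\le\bigl[f(m)\bigr]^{h(t_k)}\bigl[f(M)\bigr]^{h(1-t_k)}$. Taking logarithms (legitimate since $f>0$), multiplying by the nonnegative number $h\!\left(\tfrac{w_k}{W_n}\right)$, and summing over $k$ gives
\[
\sum_{k=1}^n h\!\left(\tfrac{w_k}{W_n}\right)\log f(x_k)\ \le\ \log f(m)\sum_{k=1}^n h(t_k)\,h\!\left(\tfrac{w_k}{W_n}\right)+\log f(M)\sum_{k=1}^n h(1-t_k)\,h\!\left(\tfrac{w_k}{W_n}\right).
\]
Then I would invoke supermultiplicativity of $h$, namely $h(t_k)\,h\!\left(\tfrac{w_k}{W_n}\right)\le h\!\left(\tfrac{M-x_k}{M-m}\cdot\tfrac{w_k}{W_n}\right)$ and the analogous bound with $1-t_k$ in place of $t_k$; substituting and exponentiating reproduces \eqref{eq4.3}. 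For the ${\rm{A_tG_h}}$-concave statement, \eqref{eqhAG} reverses, $h$ is now submultiplicative so the auxiliary bound $h(t_k)h(w_k/W_n)\ge h(\cdot)$ reverses as well, and the whole chain runs the other way.

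For part (2), I would apply \eqref{eqhAH} with $\alpha=m$, $\beta=M$, $t=t_k$ and take reciprocals to obtain
\[
\frac{1}{f(x_k)}\ \ge\ \frac{h(1-t_k)\,f(m)+h(t_k)\,f(M)}{f(m)f(M)}\ =\ \frac{h\!\left(\tfrac{x_k-m}{M-m}\right)f(m)+h\!\left(\tfrac{M-x_k}{M-m}\right)f(M)}{f(m)f(M)}.
\]
Multiplying through by $h\!\left(\tfrac{w_k}{W_n}\right)\ge 0$ and summing over $k$ yields the inequality between the two (reciprocated) sums appearing in \eqref{eq4.4}; since both sums are positive, inverting reverses it and gives \eqref{eq4.4}. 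The ${\rm{A_tH_h}}$-concave case follows by reversing \eqref{eqhAH} and every subsequent step. The main obstacle, as I see it, is the collapsing step in part (1): merging the interpolation weight $h(t_k)$ with the Jensen weight $h\!\left(\tfrac{w_k}{W_n}\right)$ is exactly where (super)multiplicativity of $h$ is indispensable and where the direction of the final product inequality is fixed — by contrast, part (2) needs nothing beyond $h\ge 0$ together with \eqref{eqhAH}, the (sub)multiplicativity hypothesis there being kept only for parallelism with Theorem \ref{thm11}.
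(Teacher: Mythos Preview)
Your argument is essentially the paper's: apply the defining inequality at each $x_k=t_k m+(1-t_k)M$, then aggregate with the weights $h(w_k/W_n)$ --- via logarithms/powers for (1) and via reciprocals for (2) --- invoking supermultiplicativity in part (1) to merge $h(t_k)\,h(w_k/W_n)$ into $h(t_k\cdot w_k/W_n)$. Your closing remark that (sub)multiplicativity is never actually used in part (2) is correct and goes slightly beyond the paper, which lists the hypothesis but does not invoke it in the proof.
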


\begin{proof}

\begin{enumerate}
\item In \eqref{eqhAG}, setting $m=x_1$, $x_2=x_k$ and $x_3=M$ we
get
\begin{align*}
f\left( x_k \right) \le \left[ {f\left( m  \right)}
\right]^{h\left(\frac{{M  - x_k}}{{M  - m }}\right)} \left[
{f\left( M \right)} \right]^{h\left(\frac{{ x_k - m }}{{ M - m
}}\right)}.
\end{align*}
Since $f$ is positive therefore we have
\begin{align*}
\left[f\left( x_k \right)\right]^{h\left( {\frac{{w_k }}{{W_n }}}
\right)} &\le  \left[ {f\left( m  \right)}
\right]^{h\left(\frac{{M - x_k}}{{M  - m }}\right)\cdot h\left(
{\frac{{w_k }}{{W_n }}} \right)} \left[ {f\left( M \right)}
\right]^{h\left(\frac{{ x_k - m }}{{ M - m }}\right)\cdot h\left(
{\frac{{w_k }}{{W_n }}} \right)}
\\
&\le \left[ {f\left( m  \right)} \right]^{h\left(\frac{{M -
x_k}}{{M  - m }}\cdot \frac{{w_k }}{{W_n }}\right)\cdot  } \left[
{f\left( M \right)} \right]^{h\left(\frac{{ x_k - m }}{{ M - m
}}\cdot \frac{{w_k }}{{W_n }}\right) },
\end{align*}
Multiplying the above inequality up to $n$ we get the required
results in \eqref{eq4.3}.\\

\item   Setting $m=x_1$, $x_2=x_k$ and $x_3=M$ in the reverse of
\eqref{eqhAH} we get
\begin{align*}
f\left( x_k \right) \le \frac{{  f\left( m \right)f\left( M
\right)}}{{h\left( {\frac{{x_k  - m}}{{M - m}}} \right)f\left( m
\right) + h\left( {\frac{{M-x_k}}{{M - m}}} \right)f\left(M
\right)}}.
\end{align*}
  Reversing  the inequality and then multiplying the above inequality  by $h\left( {\frac{{w_k
}}{{W_n }}} \right) $ we get
\begin{align*}
\frac{h\left( {\frac{{w_k }}{{W_n }}} \right) }{f\left( x_k
\right)} \ge \frac{h\left( {\frac{{x_k  - m}}{{M - m}}}
\right)f\left( m \right) + h\left( {\frac{{M-x_k}}{{M - m}}}
\right)f\left(M \right)}{ f\left( m \right)f\left( M
\right)}h\left( {\frac{{w_k }}{{W_n }}} \right).
\end{align*}
Summing up to $n$ and then reverse  the above inequality, we get
the required result in \eqref{eq4.4}.
\end{enumerate}
\end{proof}

\begin{theorem}
Let $w_1, w_2,\cdots,w_n$ be positive real numbers $(n \ge 2)$,
and $W_n=\sum\limits_{k = 1}^n{w_k}$.
\begin{enumerate}
\item If $h$ is a non-negative supermultiplicative function and
 $f$ is positive  ${\rm{G_tA_h}}$-convex on $I$, then for $x_1,x_2,\cdots,x_n \in I$ the following inequality
 holds
 \begin{align}
 f\left( {\prod\limits_{k = 1}^n {\left( {x_k } \right)^{\frac{{w_k }}{{W_n }}} } }
 \right)\le\sum\limits_{k = 1}^n {h\left( {\frac{{w_k }}{{W_n }}} \right)f\left( {x_k } \right)}. \label{eq4.5}
\end{align}
If $h$ is submultiplicative function and $f$ is an
 ${\rm{G_tA_h}}$-concave then inequality  is reversed.\\

\item If $h$ is a non-negative supermultiplicative function and
 $f$ is positive  ${\rm{G_tG_h}}$-convex on $I$, then for $x_1,x_2,\cdots,x_n \in I$ the following inequality
 holds
 \begin{align}
f\left( {\prod\limits_{k = 1}^n {\left( {x_k } \right)^{\frac{{w_k
}}{{W_n }}} } } \right) \le\prod\limits_{k = 1}^n {\left[ {f\left(
{x_k } \right)} \right]^{h\left( {\frac{{w_k }}{{W_n }}} \right)}
}. \label{eq4.6}
\end{align}
If $h$ is submultiplicative function and $f$ is an
 ${\rm{G_tG_h}}$-concave then inequality  is reversed.\\

\item If $h$ is a non-negative submultiplicative function and
 $f$ is  positive  ${\rm{G_tH_h}}$-convex on $I$, then for $x_1,x_2,\cdots,x_n \in I$ the following inequality
 holds
 \begin{align}
f\left( {\prod\limits_{k = 1}^n {\left( {x_k } \right)^{\frac{{w_k
}}{{W_n }}} } } \right) \le \frac{1}{\sum\limits_{k = 1}^n
{\frac{{h\left( {{\textstyle{{w_k } \over{W_n }}}}
\right)}}{{f\left( {x_k } \right)}}}}. \label{eq4.7}
\end{align}
If $h$ is supermultiplicative function and $f$ is an
 ${\rm{G_tH_h}}$-concave then inequality  is reversed.
\end{enumerate}
\end{theorem}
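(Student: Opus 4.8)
The plan is to prove the three parts simultaneously by induction on $n$, following verbatim the scheme used for Theorem~\ref{thm11}. For the base case $n=2$, each of \eqref{eq4.5}, \eqref{eq4.6}, \eqref{eq4.7} is precisely the defining inequality \eqref{eqhGA}, \eqref{eqhGG}, respectively \eqref{eqhGH}, applied with $\alpha=x_1$, $\beta=x_2$ and $t=\frac{w_1}{W_2}$ (so $1-t=\frac{w_2}{W_2}$), the quantities $h\!\left(\frac{w_1}{W_2}\right)$ and $h\!\left(\frac{w_2}{W_2}\right)$ playing the roles of $h(t)$ and $h(1-t)$; hence the base case is immediate, and likewise for the reversed (concave) versions.

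For the inductive step I would assume the asserted inequality for $n-1$ points and set $y:=\prod_{k=1}^{n-1}\left(x_k\right)^{w_k/W_{n-1}}$. The algebraic identity
\[
\prod_{k=1}^{n}\left(x_k\right)^{\frac{w_k}{W_n}}
 = \left(x_n\right)^{\frac{w_n}{W_n}}\,y^{\frac{W_{n-1}}{W_n}},
 \qquad \frac{w_n}{W_n}+\frac{W_{n-1}}{W_n}=1,
\]
rewrites the $n$-fold weighted geometric mean as a binary weighted geometric mean of $x_n$ and $y$. Feeding this decomposition into the binary $\mathrm{G_tN_h}$-convexity of $f$ (i.e.\ \eqref{eqhGA}, \eqref{eqhGG}, or \eqref{eqhGH}) peels off $x_n$ and leaves the corresponding $\mathrm{N}$-combination of $f(x_n)$ and $f(y)$ with coefficients $h\!\left(\frac{w_n}{W_n}\right)$ and $h\!\left(\frac{W_{n-1}}{W_n}\right)$; the induction hypothesis then expands $f(y)$ into the same kind of $\mathrm{N}$-combination of $f(x_1),\dots,f(x_{n-1})$, with inner coefficients $h\!\left(\frac{w_k}{W_{n-1}}\right)$.

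The only genuinely inequality-producing step is the collapse of the nested coefficients, and here the hypothesis on $h$ enters exactly as expected. For parts (1) and (2), supermultiplicativity of $h$ gives $h\!\left(\frac{W_{n-1}}{W_n}\right)h\!\left(\frac{w_k}{W_{n-1}}\right)\le h\!\left(\frac{w_k}{W_n}\right)$, whereas for part (3) — where the outer mean is the harmonic mean, so the relevant chain of inequalities runs the other way — submultiplicativity gives $h\!\left(\frac{W_{n-1}}{W_n}\right)h\!\left(\frac{w_k}{W_{n-1}}\right)\ge h\!\left(\frac{w_k}{W_n}\right)$. Inserting this bound — multiplied by $f(x_k)>0$ in part (1), used in the exponent in part (2), and used inside the sum of reciprocals in part (3) — converts the composite weights into the target weights $h\!\left(\frac{w_k}{W_n}\right)$ and closes the induction; the submultiplicative/concave statements then follow by reversing every inequality throughout. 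I expect the main obstacle to be purely routine bookkeeping in parts (2) and (3): transporting the $h$-inequality through an exponent, respectively through a reciprocal, which is handled precisely as in the proof of Theorem~\ref{thm11}(1)--(2), so no new idea is required beyond that theorem.
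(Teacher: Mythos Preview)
Your proposal is correct and follows essentially the same inductive argument as the paper: decompose the $n$-fold weighted geometric mean as a binary geometric mean of $x_n$ and $y=\prod_{k=1}^{n-1}x_k^{w_k/W_{n-1}}$, apply the defining two-point inequality, invoke the induction hypothesis on $f(y)$, and then collapse $h\!\left(\tfrac{W_{n-1}}{W_n}\right)h\!\left(\tfrac{w_k}{W_{n-1}}\right)$ into $h\!\left(\tfrac{w_k}{W_n}\right)$ using super- (parts (1),(2)) or sub-multiplicativity (part (3)). Your write-up is in fact slightly cleaner than the paper's, which carries a few copy-paste artifacts (e.g.\ an arithmetic mean appearing inside $f$ where the geometric mean is meant), but the underlying method is identical.
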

\begin{proof}
Our proof carries by induction. In case $n=2$, the  results hold
by definition.
\begin{enumerate}
\item Assume \eqref{eq4.5} holds for $n-1$ and we are going to
prove it for $n$.
\begin{align*}
  f\left( {\prod\limits_{k = 1}^n {\left( {x_k } \right)^{\frac{{w_k }}{{W_n }}} } } \right) &= f\left( {\left( {x_n } \right)^{\frac{{w_n }}{{W_n }}}  \cdot \prod\limits_{k = 1}^{n - 1} {\left( {x_k } \right)^{\frac{{w_k }}{{W_n }}} } } \right) \\
  &= f\left( {\left( {x_n } \right)^{\frac{{w_n }}{{W_n }}}  \cdot \prod\limits_{k = 1}^{n - 1} {\left( {x_k } \right)^{\frac{{w_k }}{{W_{n - 1} }}\frac{{W_{n - 1} }}{{W_n }}} } } \right) \\
  &\le h\left( {\frac{{w_n }}{{W_n }}} \right)f\left( {x_n } \right) + h\left( {\frac{{W_{n - 1} }}{{W_n }}} \right)f\left( {\sum\limits_{k = 1}^{n - 1} {\frac{{w_k }}{{W_{n - 1} }}x_k } } \right) \\
  &\le h\left( {\frac{{w_n }}{{W_n }}} \right)f\left( {x_n } \right) + h\left( {\frac{{W_{n - 1} }}{{W_n }}} \right)\sum\limits_{k = 1}^{n - 1} {h\left( {\frac{{w_k }}{{W_{n - 1} }}} \right)f\left( {x_k } \right)}  \\
  &\le h\left( {\frac{{w_n }}{{W_n }}} \right)f\left( {x_n } \right) + \sum\limits_{k = 1}^{n - 1} {h\left( {\frac{{w_k }}{{W_n }}} \right)f\left( {x_k } \right)}
  = \sum\limits_{k = 1}^n {h\left( {\frac{{w_k }}{{W_n }}} \right)f\left( {x_k }
  \right)},
 \end{align*}
which proves the desired result in \eqref{eq4.5}.

 \item Assume \eqref{eq4.6} holds for $n-1$ and we are going to prove
it for $n$.
\begin{align*}
 f\left( {\prod\limits_{k = 1}^n {\left( {x_k } \right)^{\frac{{w_k }}{{W_n }}} } } \right)
  &\le \left[ {f\left( {x_n } \right)} \right]^{h\left( {\frac{{w_n }}{{W_n }}} \right)} \left[ {f\left( {\prod\limits_{k = 1}^{n - 1} {\frac{{w_k }}{{W_{n - 1} }}x_k } } \right)} \right]^{h\left( {\frac{{W_{n - 1} }}{{W_n }}} \right)}  \\
  &\le \left[ {f\left( {x_n } \right)} \right]^{h\left( {\frac{{w_n }}{{W_n }}} \right)} \left[ {\prod\limits_{k = 1}^{n - 1} {\left( {f\left( {x_k } \right)} \right)^{h\left( {\frac{{w_k }}{{W_{n - 1} }}} \right)} } } \right]^{h\left( {\frac{{W_{n - 1} }}{{W_n }}} \right)}  \\
  &\le \left[ {f\left( {x_n } \right)} \right]^{h\left( {\frac{{w_n }}{{W_n }}} \right)} \left[ {\prod\limits_{k = 1}^{n - 1} {\left( {f\left( {x_k } \right)} \right)^{h\left( {\frac{{w_k }}{{W_{n - 1} }}} \right)h\left( {\frac{{W_{n - 1} }}{{W_n }}} \right)} } } \right] \\
  &\le \left[ {f\left( {x_n } \right)} \right]^{h\left( {\frac{{w_n }}{{W_n }}} \right)} \left[ {\prod\limits_{k = 1}^{n - 1} {\left( {f\left( {x_k } \right)} \right)^{h\left( {\frac{{w_k }}{{W_n }}} \right)} } } \right]
   = \prod\limits_{k = 1}^n {\left[ {f\left( {x_k } \right)} \right]^{h\left( {\frac{{w_k }}{{W_n }}}
   \right)}},
 \end{align*}
which proves the desired result in \eqref{eq4.6}.

\item Assume \eqref{eq4.7} holds for $n-1$ and we are going to
prove it for $n$.
\begin{align*}
 f\left( {\prod\limits_{k = 1}^n {\left( {x_k } \right)^{\frac{{w_k }}{{W_n }}} } } \right)
   &\le \frac{1}{{   \frac{h\left( {\frac{{w_n }}{{W_n }}} \right)}{f\left( {x_n} \right)} + \frac{h\left( {\frac{{W_{n - 1} }}{{W_n }}} \right)}{f\left( {\sum\limits_{k = 1}^{n - 1} {\frac{{w_k }}{{W_{n - 1} }}x_k } } \right)}}} \\
  &\le \frac{1}{{   \frac{h\left( {\frac{{w_n }}{{W_n }}} \right)}{f\left( {x_n} \right)} +h\left( {{\textstyle{{W_{n - 1} } \over {W_n  }}}}\right)\sum\limits_{k = 1}^{n - 1}{\frac{{h\left({{\textstyle{{w_k } \over {W_{n-1} }}}}\right)}}{{f\left( {x_k }\right)}}}}} \\
    &\le \frac{1}{{   \frac{h\left( {\frac{{w_n }}{{W_n }}} \right)}{f\left( {x_n} \right)} +\sum\limits_{k = 1}^{n - 1} {\frac{{h\left( {{\textstyle{{w_k }\over {W_n }}}} \right)}}{{f\left( {x_k } \right)}}}}}
   \le \frac{1}{\sum\limits_{k = 1}^n {\frac{{h\left( {{\textstyle{{w_k } \over{W_n }}}} \right)}}{{f\left( {x_k } \right)}}}},
 \end{align*}
 which proves the desired result in \eqref{eq4.7}.
\end{enumerate} Hence, by Mathematical Induction both statements
are hold for all $n\ge2$, and therefore the proof is completely
established.
\end{proof}

The corresponding converse versions of Jensen inequality for
 ${\rm{G_tA_h}}$-convex,  ${\rm{G_tG_h}}$-convex and
 ${\rm{G_tH_h}}$-convex are incorporated in the following
theorem.
\begin{theorem}
Let $w_1, w_2,\cdots,w_n$ be positive real numbers $(n \ge 2)$,
and $\left(m,M\right)\subseteq I$.
\begin{enumerate}
\item If $h:\left(m,M\right) \to \left[m,M\right)$ is a
non-negative supermultiplicative function and
 $f$ is positive ${\rm{G_tA_h}}$-convex, then for every finite sequence of points $x_1,\cdots, x_n \in \left(m,M\right)$ $(x_k<x_{k+ 1})$
 we have
 \begin{multline}
\sum\limits_{k = 1}^n {h\left( {\frac{{w_k }}{{W_n }}} \right)
f\left( x_k \right)}
\\
\le     \sum\limits_{k = 1}^n { h\left( {\frac{{w_k }}{{W_n }}}
\right)\cdot\left[h\left( \frac{{\ln \left(M \right) - \ln
\left(x_k \right) }}{{ \ln \left(M  \right) - \ln \left(m \right)
}} \right)\cdot f\left( m \right) + h\left(\frac{{\ln \left( x_k
\right) - \ln \left( m \right) }}{{ \ln \left( M  \right) - \ln
\left(m \right)}}\right) \cdot f\left(M \right)\right]
}.\label{eq4.8}
\end{multline}
If $h$ is submultiplicative function and $f$ is an
${\rm{G_tA_h}}$-concave then inequality  is reversed.\\

\item If $h:\left(0,\infty\right) \to \left(0,\infty\right)$ is a
non-negative supermultiplicative function and
 $f$ is positive ${\rm{G_tG_h}}$-convex, then for every finite sequence of points $x_1,\cdots, x_n \in \left(m,M\right)\subseteq I$
 we have
 \begin{align}
\prod\limits_{k = 1}^n {\left[f\left( x_k \right) \right]^{h\left(
{\frac{{w_k }}{{W_n }}} \right)}}  \le \prod\limits_{k = 1}^n
{\left\{\left[ {f\left( m  \right)} \right]^{h\left( \frac{{\ln
\left(M \right) - \ln \left(x_k \right) }}{{ \ln \left(M  \right)
- \ln \left(m \right) }} \right)\cdot h\left( {\frac{{w_k }}{{W_n
}}} \right)} \left[ {f\left( M \right)} \right]^{h\left(\frac{{\ln
\left( x_k \right) - \ln \left( m \right) }}{{ \ln \left( M
\right) - \ln \left(m \right)}}\right)\cdot h\left( {\frac{{w_k
}}{{W_n }}} \right)}\right\}}. \label{eq4.9}
\end{align}
If $h$ is submultiplicative function and $f$ is an
${\rm{G_tG_h}}$-concave then inequality  is reversed.\\

\item If $h:\left(0,\infty\right) \to \left(0,\infty\right)$ is a
non-negative submultiplicative function and
 $f$ is positive ${\rm{G_tH_h}}$-convex, then for every finite sequence of points $x_1,\cdots, x_n \in \left(m,M\right)\subseteq I$
 we have
 \begin{align}
\left(\sum\limits_{k = 1}^n{\frac{h\left( {\frac{{w_k }}{{W_n }}}
\right)}{f\left( x_k \right)} } \right)^{-1} \le
\left(\sum\limits_{k = 1}^n {\frac{ h\left(\frac{\ln x_k - \ln
m}{\ln M - \ln m}\right)f\left(m \right) + h\left(\frac{\ln M -
\ln x_k}{\ln M - \ln m}\right) f\left( M \right)}{f\left( m
\right)f\left( M \right) }h\left( {\frac{{w_k }}{{W_n }}} \right)}
\right)^{-1}.\label{eq4.10}
\end{align}
If $h$ is supermultiplicative function and $f$ is an
${\rm{G_tH_h}}$-concave then inequality  is reversed.\\
\end{enumerate}

\end{theorem}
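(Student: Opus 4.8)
The plan is to repeat, for the three $\mathrm{G_t}$-cases, the endpoint-anchoring scheme already used in the $\mathrm{A_tG_h}$/$\mathrm{A_tH_h}$ converses, but now interpolating $x_k$ between $m$ and $M$ \emph{multiplicatively} rather than additively. Fix $k$ and note that, since $x_k\in(m,M)$, one can write $x_k=m^{t_k}M^{1-t_k}$ with
$$t_k=\frac{\ln M-\ln x_k}{\ln M-\ln m}\in(0,1),\qquad 1-t_k=\frac{\ln x_k-\ln m}{\ln M-\ln m}\in(0,1);$$
in particular $t_k,1-t_k\in(0,1)\subseteq J$, so $h$ is evaluated only where it is defined and nonnegative, and likewise $w_k/W_n\in(0,1)\subseteq J$ for $n\ge 2$. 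This is exactly the substitution $(x_1,x_2,x_3)=(m,x_k,M)$ of Theorem~\ref{thm9}, so the relevant three-point estimates are available for every $k$.

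For \eqref{eq4.8} I would apply \eqref{eqhGA} with $\alpha=m$, $\beta=M$ to get
$$f(x_k)\le h(t_k)f(m)+h(1-t_k)f(M)=h\!\left(\tfrac{\ln M-\ln x_k}{\ln M-\ln m}\right)f(m)+h\!\left(\tfrac{\ln x_k-\ln m}{\ln M-\ln m}\right)f(M),$$
then multiply by the nonnegative weight $h(w_k/W_n)$ and sum over $k=1,\dots,n$: the left side becomes $\sum_k h(w_k/W_n)f(x_k)$ and the right side is precisely the bound in \eqref{eq4.8}. For \eqref{eq4.9}, apply \eqref{eqhGG} with $\alpha=m$, $\beta=M$ to obtain $f(x_k)\le[f(m)]^{h(t_k)}[f(M)]^{h(1-t_k)}$; since $f>0$, raising to the positive power $h(w_k/W_n)$ preserves the inequality, and multiplying the resulting estimates over $k$ gives \eqref{eq4.9} verbatim. (Here the exponents stay as products $h(\cdot)\,h(\cdot)$, so no multiplicativity of $h$ is needed in the exponent manipulation; supermultiplicativity enters only as the standing hypothesis underlying $\mathrm{G_tG_h}$-convexity.) For \eqref{eq4.10}, apply \eqref{eqhGH} with $\alpha=m$, $\beta=M$:
$$f(x_k)\le\frac{f(m)f(M)}{h(1-t_k)f(m)+h(t_k)f(M)}\ \Longrightarrow\ \frac{1}{f(x_k)}\ge\frac{h\!\left(\tfrac{\ln x_k-\ln m}{\ln M-\ln m}\right)f(m)+h\!\left(\tfrac{\ln M-\ln x_k}{\ln M-\ln m}\right)f(M)}{f(m)f(M)};$$
multiply by $h(w_k/W_n)\ge 0$, sum over $k$, and take reciprocals of both sides (which reverses the inequality) to recover \eqref{eq4.10}. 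In each case the concave/reversed statement follows by reversing every inequality above and interchanging the roles of super- and submultiplicativity of $h$.

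The argument is essentially bookkeeping, so I do not expect a genuine obstacle; the only point needing care is the direction-of-inequality accounting in \eqref{eq4.10}, where one reciprocates twice — once to turn the $\mathrm{G_tH_h}$-estimate for $f(x_k)$ into a lower bound for $1/f(x_k)$, and again after summation — together with checking throughout that the logarithmic ratios $\tfrac{\ln M-\ln x_k}{\ln M-\ln m}$, $\tfrac{\ln x_k-\ln m}{\ln M-\ln m}$ and the weights $w_k/W_n$ all lie in $(0,1)\subseteq J$, so that the defining inequalities of $\mathrm{G_tA_h}$-, $\mathrm{G_tG_h}$- and $\mathrm{G_tH_h}$-convexity legitimately apply.
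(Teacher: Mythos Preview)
Your proposal is correct and follows essentially the same approach as the paper: for each $k$ you write $x_k=m^{t_k}M^{1-t_k}$ with $t_k=\frac{\ln M-\ln x_k}{\ln M-\ln m}$, apply the defining inequality \eqref{eqhGA}, \eqref{eqhGG}, or \eqref{eqhGH} at the endpoints $m,M$, then multiply by (or raise to the power) $h(w_k/W_n)$ and sum (or take the product) over $k$, with the double-reciprocation in part~(3) handled exactly as in the paper. Your remark that supermultiplicativity of $h$ is not actually used in the exponent manipulation of part~(2) is accurate and is a point the paper does not comment on.
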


\begin{proof}

\begin{enumerate}
\item In \eqref{eqhGA}, setting $m=x_1$, $x_2=x_k$ and $x_3=M$ we
get
\begin{align*}
f\left( x_k \right)\le h\left( \frac{{\ln \left(M \right) - \ln
\left(x_k \right) }}{{ \ln \left(M  \right) - \ln \left(m \right)
}} \right)\cdot f\left( m \right) + h\left(\frac{{\ln \left( x_k
\right) - \ln \left( m \right) }}{{ \ln \left( M  \right) - \ln
\left(m \right)}}\right) \cdot f\left(M \right)
\end{align*}
Multiplying the above inequality  by $h\left( {\frac{{w_k }}{{W_n
}}} \right) $ and summing up to $n$ we get the required results in
\eqref{eq4.8}.

\item  Setting $m=x_1$, $x_2=x_k$ and $x_3=M$ in \eqref{eqhGG} we
get
\begin{align*}
f\left( x_k \right)  \le \left[ {f\left( m  \right)}
\right]^{h\left( \frac{{\ln \left(M \right) - \ln \left(x_k
\right) }}{{ \ln \left(M  \right) - \ln \left(m \right) }}
\right)} \left[ {f\left( M \right)} \right]^{h\left(\frac{{\ln
\left( x_k \right) - \ln \left( m \right) }}{{ \ln \left( M
\right) - \ln \left(m \right)}}\right)}.
\end{align*}
Since $f$ is positive, the above inequality implies that
\begin{align*}
\left[f\left( x_k \right) \right]^{h\left( {\frac{{w_k }}{{W_n }}}
\right)} \le \left[ {f\left( m  \right)} \right]^{h\left(
\frac{{\ln \left(M \right) - \ln \left(x_k \right) }}{{ \ln
\left(M  \right) - \ln \left(m \right) }} \right)\cdot h\left(
{\frac{{w_k }}{{W_n }}} \right)} \left[ {f\left( M \right)}
\right]^{h\left(\frac{{\ln \left( x_k \right) - \ln \left( m
\right) }}{{ \ln \left( M \right) - \ln \left(m
\right)}}\right)\cdot h\left( {\frac{{w_k }}{{W_n }}} \right)}.
\end{align*}
Multiplying the above inequality  up to $n$ we get the required
result in \eqref{eq4.9}.

\item  Since $f$ is  ${\rm{G_tH_h}}$-convex, then \eqref{eqhGH}
holds.
\begin{align*}
f\left( x_k \right) &\le \frac{{f\left( m \right)f\left( M \right)
}}{{ h\left(\frac{\ln x_k - \ln m}{\ln M - \ln m}\right)f\left(m
\right) + h\left(\frac{\ln M - \ln x_k}{\ln M - \ln m}\right)
f\left( M \right)}}.
\end{align*}
Reversing the order in the inequality we get
\begin{align*}
\frac{1}{f\left( x_k \right)} &\ge \frac{ h\left(\frac{\ln x_k -
\ln m}{\ln M - \ln m}\right)f\left(m \right) + h\left(\frac{\ln M
- \ln x_k}{\ln M - \ln m}\right) f\left( M \right)}{f\left( m
\right)f\left( M \right) }.
\end{align*}
Multiplying both sides by $h\left( {\frac{{w_k }}{{W_n }}}
\right)$ and summing up to $n$ we get
\begin{align*}
\sum\limits_{k = 1}^n{\frac{h\left( {\frac{{w_k }}{{W_n }}}
\right)}{f\left( x_k \right)} }&\ge \sum\limits_{k = 1}^n {\frac{
h\left(\frac{\ln x_k - \ln m}{\ln M - \ln m}\right)f\left(m
\right) + h\left(\frac{\ln M - \ln x_k}{\ln M - \ln m}\right)
f\left( M \right)}{f\left( m \right)f\left( M \right) }h\left(
{\frac{{w_k }}{{W_n }}} \right)}.
\end{align*}
Reversing the order in the inequality again we get the required
result in \eqref{eq4.10}.
\end{enumerate}

\end{proof}

\begin{theorem}
Let $w_1, w_2,\cdots,w_n$ be positive real numbers $(n \ge 2)$,
and $W_n=\sum\limits_{k = 1}^n{w_k}$.
\begin{enumerate}
\item If $h$ is a non-negative supermultiplicative function and
 $f$ is positive  ${\rm{H_tA_h}}$-convex on $I$, then for $x_1,x_2,\cdots,x_n \in I$ the following inequality
 holds
 \begin{align}
f\left( { \left( \frac{1}{{W_n }}\sum\limits_{k = 1}^n {\frac{{w_k
}}{{x_k }}}  \right)^{-1}}
  \right) \le \sum\limits_{k = 1}^n {h\left(
{\frac{{w_k }}{{W_n }}} \right)f\left( {x_k } \right)}.
\label{eq4.11}
\end{align}
If $h$ is submultiplicative function and $f$ is an
${\rm{H_tA_h}}$-concave then inequality  is reversed.\\

\item If $h$ is a non-negative supermultiplicative function and
 $f$ is positive ${\rm{H_tG_h}}$-convex on $I$, then for $x_1,x_2,\cdots,x_n \in I$ the following inequality
 holds
 \begin{align}
f\left( { \left( \frac{1}{{W_n }}\sum\limits_{k = 1}^n {\frac{{w_k
}}{{x_k }}}  \right)^{-1}}
  \right)\le\prod\limits_{k = 1}^n {\left[
{f\left( {x_k } \right)} \right]^{h\left( {\frac{{w_k }}{{W_n }}}
\right)} }.\label{eq4.12}
\end{align}
If $h$ is submultiplicative function and $f$ is an
 ${\rm{H_tG_h}}$-concave then inequality  is reversed.\\

\item If $h$ is a non-negative submultiplicative function and
 $f$ is positive  ${\rm{H_tH_h}}$-convex on $I$, then for $x_1,x_2,\cdots,x_n \in I$ the following inequality
 holds
 \begin{align}
f\left( { \left( \frac{1}{{W_n }}\sum\limits_{k = 1}^n {\frac{{w_k
}}{{x_k }}}  \right)^{-1}}
  \right)\le \left( {\sum\limits_{k
= 1}^n {\frac{h\left( {\frac{{w_k }}{{W_n }}} \right)}{f\left(
{x_k} \right)}} } \right)^{ - 1}.\label{eq4.13}
\end{align}
If $h$ is supermultiplicative function and $f$ is an
 ${\rm{H_tH_h}}$-concave then inequality  is reversed.
\end{enumerate}
\end{theorem}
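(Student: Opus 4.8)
The plan is to argue by induction on $n$, following the same template as the proofs of \eqref{eq4.1}, \eqref{eq4.5} and \eqref{eq4.6}. The base case $n=2$ is immediate: with $t=\frac{w_1}{W_2}$ one has $\bigl(\frac{1}{W_2}(\frac{w_1}{x_1}+\frac{w_2}{x_2})\bigr)^{-1}=\frac{x_1 x_2}{t x_2+(1-t)x_1}$, and then \eqref{eqhHA}, \eqref{eqhHG}, \eqref{eqhHH} applied with $\alpha=x_2$, $\beta=x_1$ become exactly \eqref{eq4.11}, \eqref{eq4.12}, \eqref{eq4.13} for $n=2$. The one computation that drives the induction is the nesting identity for the weighted harmonic mean: putting $y:=\bigl(\frac{1}{W_{n-1}}\sum_{k=1}^{n-1}\frac{w_k}{x_k}\bigr)^{-1}$,
\begin{align*}
\left(\frac{1}{W_n}\sum_{k=1}^{n}\frac{w_k}{x_k}\right)^{-1}=\left(\frac{w_n}{W_n}\cdot\frac{1}{x_n}+\frac{W_{n-1}}{W_n}\cdot\frac{1}{y}\right)^{-1}=\frac{x_n\,y}{\frac{W_{n-1}}{W_n}\,x_n+\frac{w_n}{W_n}\,y},
\end{align*}
which is once again a two-point weighted harmonic mean of $x_n$ and $y$ with parameter $t=\frac{W_{n-1}}{W_n}$, so that $1-t=\frac{w_n}{W_n}$.

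For part (1), assuming \eqref{eq4.11} for $n-1$ points, I would apply \eqref{eqhHA} to the above mean (with $\alpha=x_n$, $\beta=y$, $t=\frac{W_{n-1}}{W_n}$) and then the inductive hypothesis to $f(y)$, to get $f(\cdots)\le h(\frac{w_n}{W_n})f(x_n)+h(\frac{W_{n-1}}{W_n})\sum_{k=1}^{n-1}h(\frac{w_k}{W_{n-1}})f(x_k)$. Supermultiplicativity of $h$ then gives $h(\frac{W_{n-1}}{W_n})h(\frac{w_k}{W_{n-1}})\le h(\frac{w_k}{W_n})$, which collapses the right-hand side to $\sum_{k=1}^{n}h(\frac{w_k}{W_n})f(x_k)$ and closes the induction. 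Part (2) is word-for-word the same with \eqref{eqhHG} in place of \eqref{eqhHA}: the step yields $f(\cdots)\le[f(x_n)]^{h(w_n/W_n)}\bigl[\prod_{k=1}^{n-1}[f(x_k)]^{h(w_k/W_{n-1})}\bigr]^{h(W_{n-1}/W_n)}$, and supermultiplicativity again contracts the nested exponents $h(\frac{W_{n-1}}{W_n})h(\frac{w_k}{W_{n-1}})$ to $h(\frac{w_k}{W_n})$, giving \eqref{eq4.12}.

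For part (3) I would use \eqref{eqhHH} in its reciprocal form, namely $\frac{1}{f(H)}\ge\frac{h(t)}{f(\beta)}+\frac{h(1-t)}{f(\alpha)}$ for a two-point harmonic mean $H$. Taking $\alpha=x_n$, $\beta=y$, $t=\frac{W_{n-1}}{W_n}$, applying the inductive hypothesis $\frac{1}{f(y)}\ge\sum_{k=1}^{n-1}\frac{h(w_k/W_{n-1})}{f(x_k)}$, and then the \emph{sub}multiplicativity inequality $h(\frac{W_{n-1}}{W_n})h(\frac{w_k}{W_{n-1}})\ge h(\frac{w_k}{W_n})$, one arrives at $\frac{1}{f(\cdots)}\ge\sum_{k=1}^{n}\frac{h(w_k/W_n)}{f(x_k)}$, and inverting yields \eqref{eq4.13}. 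In all three parts the concave statements follow by reversing every inequality and swapping ``supermultiplicative'' with ``submultiplicative'', as in the earlier theorems.

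The proof is essentially routine once the nesting identity is recorded; the only thing demanding care is to match the direction of the multiplicativity hypothesis on $h$ with the mean appearing on the right: the ``arithmetic'' and ``geometric'' targets \eqref{eq4.11}--\eqref{eq4.12} require $h(\frac{W_{n-1}}{W_n})h(\frac{w_k}{W_{n-1}})\le h(\frac{w_k}{W_n})$ and hence supermultiplicativity, while the ``harmonic'' target \eqref{eq4.13} reverses this and needs submultiplicativity — which explains the differing hypotheses on $h$. Alternatively, one could derive all three parts at once from Varo\v{s}anec's Jensen inequality for ordinary $h$-convex functions, applied to $f(1/x)$, $\log f(1/x)$ and $1/f(1/x)$ respectively and using the characterizations of Theorem \ref{thm1}(7)--(9) to pass from ${\rm{H_tN_h}}$-convexity to $h$-convexity; the induction above is just that argument unwound.
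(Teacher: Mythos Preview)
Your proposal is correct and follows essentially the same approach as the paper: induction on $n$, splitting off the $n$-th point via the nesting identity for the weighted harmonic mean, applying the two-point defining inequality \eqref{eqhHA}/\eqref{eqhHG}/\eqref{eqhHH}, invoking the inductive hypothesis, and then using super- (resp.\ sub-) multiplicativity of $h$ to contract $h(\tfrac{W_{n-1}}{W_n})h(\tfrac{w_k}{W_{n-1}})$ to $h(\tfrac{w_k}{W_n})$. Your write-up is in fact tidier than the paper's, which carries a few typographical slips in the intermediate expressions; and your closing remark that the three parts could alternatively be read off from Varo\v{s}anec's $h$-Jensen inequality via the substitutions of Theorem~\ref{thm1}(7)--(9) is a useful observation that the paper does not make explicit.
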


\begin{proof}
Our proof carries by induction. In case $n=2$, both results hold.
\begin{enumerate}

\item Assume \eqref{eqhHA} holds for $n-1$ and we are going to
prove it for $n$.
\begin{align*}
 f\left( {\frac{1}{{\sum\limits_{k = 1}^n {\frac{{w_k }}{{W_n }}\frac{1}{{x_k }}} }}} \right) &= f\left( {\frac{1}{{\frac{{w_n }}{{W_n }}\frac{1}{{x_n }} + \sum\limits_{k = 1}^{n - 1} {\frac{{w_k }}{{W_n }}\frac{1}{{x_k }}} }}} \right) \\
  &= f\left( {\frac{1}{{\frac{{w_n }}{{W_n }}\frac{1}{{x_n }} + \frac{{W_{n - 1} }}{{W_n }}\sum\limits_{k = 1}^{n - 1} {\frac{{w_k }}{{W_{n - 1} }}\frac{1}{{x_k }}} }}} \right) \\
  &\le h\left( {\frac{{w_n }}{{W_n }}} \right)f\left( {x_n } \right) + h\left( {\frac{{W_{n - 1} }}{{W_n }}} \right)f\left( {\sum\limits_{k = 1}^{n - 1} {\frac{{w_k }}{{W_{n - 1} }}x_k } } \right) \\
  &\le h\left( {\frac{{w_n }}{{W_n }}} \right)f\left( {x_n } \right) + h\left( {\frac{{W_{n - 1} }}{{W_n }}} \right)\sum\limits_{k = 1}^{n - 1} {h\left( {\frac{{w_k }}{{W_{n - 1} }}} \right)f\left( {x_k } \right)}  \\
  &\le h\left( {\frac{{w_n }}{{W_n }}} \right)f\left( {x_n } \right) + \sum\limits_{k = 1}^{n - 1} {h\left( {\frac{{w_k }}{{W_n }}} \right)f\left( {x_k } \right)}  \\
  &= \sum\limits_{k = 1}^n {h\left( {\frac{{w_k }}{{W_n }}} \right)f\left( {x_k } \right)},  \\
 \end{align*}
which proves the desired result in \eqref{eq4.11}.

  \item Assume \eqref{eqhHG} holds for $n-1$ and we are going to
prove it for $n$.
\begin{align*}
 f\left( {\frac{1}{{\sum\limits_{k = 1}^n {\frac{{w_k }}{{W_n }}\frac{1}{{x_k }}} }}} \right)
   &\le \left[ {f\left( {x_n } \right)} \right]^{h\left( {\frac{{w_n }}{{W_n }}} \right)} \left[ {f\left( {\prod\limits_{k = 1}^{n - 1} {\frac{{w_k }}{{W_{n - 1} }}x_k } } \right)} \right]^{h\left( {\frac{{W_{n - 1} }}{{W_n }}} \right)}  \\
  &\le \left[ {f\left( {x_n } \right)} \right]^{h\left( {\frac{{w_n }}{{W_n }}} \right)} \left[ {\prod\limits_{k = 1}^{n - 1} {\left( {f\left( {x_k } \right)} \right)^{h\left( {\frac{{w_k }}{{W_{n - 1} }}} \right)} } } \right]^{h\left( {\frac{{W_{n - 1} }}{{W_n }}} \right)}  \\
  &\le \left[ {f\left( {x_n } \right)} \right]^{h\left( {\frac{{w_n }}{{W_n }}} \right)} \left[ {\prod\limits_{k = 1}^{n - 1} {\left( {f\left( {x_k } \right)} \right)^{h\left( {\frac{{w_k }}{{W_{n - 1} }}} \right)h\left( {\frac{{W_{n - 1} }}{{W_n }}} \right)} } } \right] \\
  &\le \left[ {f\left( {x_n } \right)} \right]^{h\left( {\frac{{w_n }}{{W_n }}} \right)} \left[ {\prod\limits_{k = 1}^{n - 1} {\left( {f\left( {x_k } \right)} \right)^{h\left( {\frac{{w_k }}{{W_n }}} \right)} } } \right]
  = \prod\limits_{k = 1}^n {\left[ {f\left( {x_k } \right)} \right]^{h\left( {\frac{{w_k }}{{W_n }}} \right)}
  },
 \end{align*}
which proves the desired result in \eqref{eq4.12}.

\item Assume \eqref{eqhHH} holds for $n-1$ and we are going to
prove it for $n$.
\begin{align*}
  f\left( {\frac{1}{{\frac{1}{{W_n }}\sum\limits_{k = 1}^n {\frac{{w_k }}{{x_k }}} }}}
  \right)
  &\le \frac{1}{{   \frac{h\left( {\frac{{w_n }}{{W_n }}} \right)}{f\left( {x_n} \right)} + \frac{h\left( {\frac{{W_{n - 1} }}{{W_n }}} \right)}{f\left( {\sum\limits_{k = 1}^{n - 1} {\frac{{w_k }}{{W_{n - 1} }}x_k } } \right)}}} \\
  &\le \frac{1}{{   \frac{h\left( {\frac{{w_n }}{{W_n }}} \right)}{f\left( {x_n} \right)} +h\left( {{\textstyle{{W_{n - 1} } \over {W_n  }}}}\right)\sum\limits_{k = 1}^{n - 1}{\frac{{h\left({{\textstyle{{w_k } \over {W_{n-1} }}}}\right)}}{{f\left( {x_k }\right)}}}}} \\
    &\le \frac{1}{{   \frac{h\left( {\frac{{w_n }}{{W_n }}} \right)}{f\left( {x_n} \right)} +\sum\limits_{k = 1}^{n - 1} {\frac{{h\left( {{\textstyle{{w_k }\over {W_n }}}} \right)}}{{f\left( {x_k } \right)}}}}}
   \le \frac{1}{\sum\limits_{k = 1}^n {\frac{{h\left( {{\textstyle{{w_k } \over{W_n }}}} \right)}}{{f\left( {x_k } \right)}}}},
 \end{align*}
 which proves the desired result in \eqref{eq4.13}.
 \end{enumerate}
Hence, by Mathematical Induction the three statements are hold for
all $n\ge2$, and therefore the proof is completely established.
\end{proof}

The corresponding converse versions of Jensen inequality for
 ${\rm{H_tA_h}}$-convex,  ${\rm{H_tG_h}}$-convex and
 ${\rm{H_tH_h}}$-convex are incorporated in the following
theorem.
\begin{theorem}
Let $w_1, w_2,\cdots,w_n$ be positive real numbers $(n \ge 2)$,
and $\left(m,M\right)\subseteq I$.
\begin{enumerate}
\item If $h:\left(0,\infty\right) \to \left(0,\infty\right)$ is a
non-negative supermultiplicative function and
 $f$ is positive ${\rm{H_tA_h}}$-convex, then for every finite sequence of points $x_1,\cdots, x_n \in \left(m,M\right)\subseteq I$
 we have
 \begin{align}
\sum\limits_{k = 1}^n {h\left( {\frac{{w_k }}{{W_n }}} \right)
f\left( x_k \right)} \le \sum\limits_{k = 1}^n {\left[ h\left(
\frac{{m \left(M  - x_k\right) }}{{ x_k\left( {M - m}\right)
}}\right)f\left( m \right) + h\left(\frac{{M\left( {x_k - m }
\right) }}{{ x_k\left( {M - m} \right)}}\right) f\left( M \right)
\right]h\left( {\frac{{w_k }}{{W_n }}} \right)}. \label{eq4.14}
\end{align}
If $h$ is submultiplicative function and $f$ is an
 ${\rm{H_tA_h}}$-concave then inequality  is reversed.\\

\item If $h:\left(0,\infty\right) \to \left(0,\infty\right)$ is a
non-negative supermultiplicative function and
 $f$ is positive ${\rm{H_tG_h}}$-convex, then for every finite sequence of points $x_1,\cdots, x_n \in
 \left(m,M\right) \subseteq I$
 we have
 \begin{align}
\prod\limits_{k = 1}^n {\left[f\left( x_k \right) \right]^{h\left(
{\frac{{w_k }}{{W_n }}} \right)}}  \le \prod\limits_{k = 1}^n
{\left\{\left[ {f\left( m  \right)} \right]^{ h\left( \frac{{m
\left(M  - x_k\right) }}{{ x_k\left( {M - m}\right) }}\right)\cdot
h\left( {\frac{{w_k }}{{W_n }}} \right)} \left[ {f\left( M
\right)} \right]^{ h\left(\frac{{M\left( {x_k - m } \right) }}{{
x_k\left( {M - m} \right)}}\right)\cdot h\left( {\frac{{w_k
}}{{W_n }}} \right)}\right\}}. \label{eq4.15}
\end{align}
If $h$ is submultiplicative function and $f$ is an
 ${\rm{H_tG_h}}$-concave then inequality  is reversed.\\

\item If $h:\left(0,\infty\right) \to \left(0,\infty\right)$ is a
non-negative submultiplicative function and
 $f$ is positive  ${\rm{H_tH_h}}$-convex, then for every finite sequence of points $x_1,\cdots, x_n \in \left(m,M\right)\subseteq I$
 we have
 \begin{align}
\left(\sum\limits_{k = 1}^n{\frac{h\left( {\frac{{w_k }}{{W_n }}}
\right)}{f\left( x_k \right)} } \right)^{-1}  \le
\left(\sum\limits_{k = 1}^n {\frac{ h\left(\frac{{M\left( {x_k - m
} \right) }}{{ x_k\left( {M - m} \right)}}\right)f\left(m \right)
+ h\left( \frac{{m \left(M - x_k\right) }}{{ x_k\left( {M -
m}\right) }}\right) f\left( M \right)}{f\left( m \right)f\left( M
\right) }h\left( {\frac{{w_k }}{{W_n }}} \right)} \right)^{-1}.
\label{eq4.16}
\end{align}
If $h$ is supermultiplicative function and $f$ is an ${\rm{H_tH_h}}$-concave then inequality  is reversed.\\
\end{enumerate}

\end{theorem}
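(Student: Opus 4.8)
The plan is to follow the template of the two preceding converse Jensen theorems: first produce a pointwise two-point estimate that bounds $f(x_k)$ by the endpoint values $f(m)$ and $f(M)$, and then aggregate these estimates -- by summation in parts (1) and (3), and by a weighted product in part (2).

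\textbf{Step 1: the harmonic representation of a point.} The only identity that requires thought is that each $x\in(m,M)$ is a generalized harmonic combination of the two endpoints. Put
\[
t(x)=\frac{M(x-m)}{x(M-m)},\qquad 1-t(x)=\frac{m(M-x)}{x(M-m)}.
\]
Since $m<x<M$ and $m,M>0$, both quantities lie in $(0,1)\subseteq J$ and add up to $1$, and a direct computation gives $mM\big/\bigl(t(x)m+(1-t(x))M\bigr)=x$. Hence, taking $\alpha=m$, $\beta=M$ and $t=t(x_k)$ in \eqref{eqhHA}, \eqref{eqhHG} and \eqref{eqhHH}, the argument on the left becomes exactly $x_k$, while the arguments of $h$ that occur on the right become $h\!\left(\tfrac{M(x_k-m)}{x_k(M-m)}\right)$ and $h\!\left(\tfrac{m(M-x_k)}{x_k(M-m)}\right)$, to which $h$ may legitimately be applied.

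\textbf{Step 2: pointwise estimates.} With this substitution, \eqref{eqhHA} reads $f(x_k)\le h\!\left(\tfrac{m(M-x_k)}{x_k(M-m)}\right)f(m)+h\!\left(\tfrac{M(x_k-m)}{x_k(M-m)}\right)f(M)$; \eqref{eqhHG} gives $f(x_k)\le\bigl[f(m)\bigr]^{h(m(M-x_k)/(x_k(M-m)))}\bigl[f(M)\bigr]^{h(M(x_k-m)/(x_k(M-m)))}$; and \eqref{eqhHH} gives $f(x_k)\le f(m)f(M)\big/\bigl(h\!\left(\tfrac{M(x_k-m)}{x_k(M-m)}\right)f(m)+h\!\left(\tfrac{m(M-x_k)}{x_k(M-m)}\right)f(M)\bigr)$.

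\textbf{Step 3: aggregation, and the concave cases.} In part (1) multiply the pointwise estimate by $h(w_k/W_n)\ge0$ and sum over $k=1,\dots,n$; this is exactly \eqref{eq4.14}. In part (2) raise the pointwise estimate to the power $h(w_k/W_n)>0$ (permissible since $f>0$, and this merely multiplies each endpoint exponent by $h(w_k/W_n)$), then take the product over $k$; this is \eqref{eq4.15}. In part (3) pass to reciprocals (which reverses the inequality, both sides being positive), multiply by $h(w_k/W_n)\ge0$, sum over $k$, and pass to reciprocals once more (reversing back); this gives \eqref{eq4.16}. In every part the concave variant, together with the corresponding interchange of the multiplicativity hypothesis on $h$, is obtained by reversing all of the inequalities above, exactly as in the two preceding converse theorems. \textbf{The main point.} There is no deep obstacle: the whole content is Step 1, the recognition of $x_k$ as the generalized harmonic mean of $m$ and $M$ with the explicit weight $t(x_k)$. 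The only places calling for care are the exponent bookkeeping in part (2) and, above all, the two reciprocal reversals in part (3), where one must keep close track of the direction of the inequality when moving between the convex and concave cases.
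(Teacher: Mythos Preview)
Your proof is correct and follows essentially the same approach as the paper: obtain the pointwise two-endpoint bound by writing $x_k$ as the harmonic mean of $m$ and $M$ with the explicit weight $t(x_k)=\frac{M(x_k-m)}{x_k(M-m)}$, then aggregate by weighted sum, weighted product, or reciprocal--sum--reciprocal respectively. Your Step~1 is in fact more explicit than the paper, which simply says ``setting $m=x_1$, $x_2=x_k$, $x_3=M$'' and leaves the reader to recover the weight; otherwise the arguments coincide. One small remark: neither your argument nor the paper's actually invokes the super-/submultiplicativity hypothesis on $h$ anywhere (the exponents in \eqref{eq4.15} remain as products $h(\cdot)\cdot h(w_k/W_n)$ and are never collapsed via $h(ab)$), so your closing comment about ``the corresponding interchange of the multiplicativity hypothesis'' is harmless but vacuous here.
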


\begin{proof}

\begin{enumerate}
\item In \eqref{eqhHA}, setting $m=x_1$, $x_2=x_k$ and $x_3=M$ we
get
\begin{align*}
f\left( x_k \right)  \le  h\left( \frac{{m \left(M  - x_k\right)
}}{{ x_k\left( {M - m}\right)  }}\right)f\left( m \right) +
h\left(\frac{{M\left( {x_k - m } \right) }}{{ x_k\left( {M - m}
\right)}}\right) f\left( M \right)
\end{align*}
Multiplying the above inequality  by $h\left( {\frac{{w_k }}{{W_n
}}} \right) $ and summing up to $n$ we get the required results in
\eqref{eq4.14}.

\item  Setting $m=x_1$, $x_2=x_k$ and $x_3=M$ in \eqref{eqhHG} we
get
\begin{align*}
f\left( x_k \right)  \le  \left[ {f\left( m  \right)} \right]^{
h\left( \frac{{m \left(M  - x_k\right) }}{{ x_k\left( {M -
m}\right)  }}\right)} \left[ {f\left( M  \right)} \right]^{
h\left(\frac{{M\left( {x_k - m } \right) }}{{ x_k\left( {M - m}
\right)}}\right)}.
\end{align*}
Since $f$ is positive, the above inequality implies that
\begin{align*}
\left[f\left( x_k \right) \right]^{h\left( {\frac{{w_k }}{{W_n }}}
\right)} \le\left[ {f\left( m  \right)} \right]^{ h\left( \frac{{m
\left(M  - x_k\right) }}{{ x_k\left( {M - m}\right) }}\right)\cdot
h\left( {\frac{{w_k }}{{W_n }}} \right)} \left[ {f\left( M
\right)} \right]^{ h\left(\frac{{M\left( {x_k - m } \right) }}{{
x_k\left( {M - m} \right)}}\right)\cdot h\left( {\frac{{w_k
}}{{W_n }}} \right)}.
\end{align*}
Multiplying the above inequality  up to $n$ we get the required
result in \eqref{eq4.15}.

\item  Setting $m=x_1$, $x_2=x_k$ and $x_3=M$ in \eqref{eqhHH} we
get
\begin{align*}
f\left( x_k \right) &\le  \frac{{  f\left( x_1  \right)f\left( x_3
\right)}}{{h\left(\frac{{M\left( {x_k - m } \right) }}{{ x_k\left(
{M - m} \right)}}\right)f\left( x_1 \right) + h\left( \frac{{m
\left(M  - x_k\right) }}{{ x_k\left( {M - m}\right)
}}\right)f\left( x_3 \right)}}.
\end{align*}
Reversing the order in the inequality we get
\begin{align*}
\frac{1}{f\left( x_k \right)} &\ge \frac{h\left(\frac{{M\left(
{x_k - m } \right) }}{{ x_k\left( {M - m} \right)}}\right)f\left(
x_1 \right) + h\left( \frac{{m \left(M - x_k\right) }}{{ x_k\left(
{M - m}\right) }}\right)f\left( x_3 \right)}{f\left( x_1
\right)f\left( x_3 \right)}.
\end{align*}
Multiplying both sides by $h\left( {\frac{{w_k }}{{W_n }}}
\right)$ and summing up to $n$ we get
\begin{align*}
\sum\limits_{k = 1}^n{\frac{h\left( {\frac{{w_k }}{{W_n }}}
\right)}{f\left( x_k \right)} }&\ge \sum\limits_{k = 1}^n {\frac{
h\left(\frac{{M\left( {x_k - m } \right) }}{{ x_k\left( {M - m}
\right)}}\right)f\left(m \right) + h\left( \frac{{m \left(M -
x_k\right) }}{{ x_k\left( {M - m}\right) }}\right) f\left( M
\right)}{f\left( m \right)f\left( M \right) }h\left( {\frac{{w_k
}}{{W_n }}} \right)}.
\end{align*}
Reversing the order in the inequality again we get the required
result in \eqref{eq4.16}.
\end{enumerate}

\end{proof}

\begin{remark}
Theorem 22 and Corollary 23 in \cite{V}, can be extended to
 ${\rm{M_tN_h}}$-convexity in similar manner, we omit the
details.
\end{remark}
\begin{remark}
We note that, in this work, all results are valid  for
\begin{enumerate}
\item the class $\overline{\mathcal{MN}}\left(h,I\right)$,
whenever $h(t)=t$, $t\in[0,1]$

\item  the class $Q\left(I;{\rm{M_t}},{\rm{N_h}}\right)$, whenever
$h(t)=\frac{1}{t}$, $t\in(0,1)$

\item  the class $P\left(I;{\rm{M_t}},{\rm{N_h}}\right)$, whenever
$h(t)=1$, $t\in[0,1]$

\item  the class $K_s^2\left(I;{\rm{M_t}},{\rm{N_h}}\right)$,
whenever $h(t)=t^s$, $s\in (0,1]$ and $t\in[0,1]$.
\end{enumerate}
\end{remark}

\end{document}